\newcommand{\Z}{\ensuremath{\mathbb Z}}
\newcommand{\dt}{\ensuremath{D_\mathcal{T}}}
\newcommand{\at}{\ensuremath{\alpha_\mathcal{T}}}
\newcommand{\tild}{\ensuremath{\widetilde{D}}}
\renewcommand{\geq}{\ensuremath{\geqslant}}
\renewcommand{\leq}{\ensuremath{\leqslant}}
\newcommand{\tauin}{\boldsymbol{\tau}_{\rm in}}
\newcommand{\tauout}{\boldsymbol{\tau}_{\rm out}}
\DeclareMathOperator{\cof}{cof}
\DeclareMathOperator{\Id}{Id}
\DeclareMathOperator{\adj}{adj}
\DeclareMathOperator{\Toep}{Toep}
\DeclareMathOperator{\diag}{diag}
\newtheorem{theorem}{Theorem}[section]
\newtheorem{lemma}[theorem]{Lemma}
\newtheorem{prop}[theorem]{Proposition}
\newtheorem{cor}[theorem]{Corollary}
\newtheorem{utheorem}{\textrm{\textbf{Theorem}}}
\theoremstyle{definition}
\newtheorem{definition}[theorem]{Definition}
\newtheorem{remark}[theorem]{Remark}
\newtheorem{notation}[theorem]{Definition-Notation}
\newtheorem{example}[theorem]{Example}
\numberwithin{equation}{section}
\begin{document}
\vspace*{-1mm}

\title[Distance matrices of a tree: two more invariants, and in a unified
framework]{Distance matrices of a tree:\\ two more invariants, and in a
unified framework}

\dedicatory{To Ravindra B.\ Bapat with admiration and thanks, for
introducing us to distance matrices}

\author{Projesh Nath Choudhury}
\address[P.N.~Choudhury]{Discipline of Mathematics, Indian Institute of
Technology Gandhinagar, Palaj, Gandhinagar 382355, India}
\email{\tt projeshnc@iitgn.ac.in}

\author{Apoorva Khare}
\address[A.~Khare]{Department of Mathematics, Indian Institute of
Science, and Analysis and Probability Research Group, Bangalore 560012,
India}
\email{\tt khare@iisc.ac.in}

\keywords{Bi-directed tree, distance matrix, determinant, cofactor-sum,
inverse, edgeweights, $q$-distance, product distance matrix, polycyclic
graph, cycle-clique graph, Graham--Hoffman--Hosoya identities}

\subjclass[2010]{05C12 (primary); %
05C05, 05C20, 05C22, 05C25, 05C50, 05C83, 15A15 (secondary)}

\begin{abstract}
A classical result of Graham and Pollak [\textit{Bell Sys. Tech. J.}
1971] states that the determinant of the distance matrix $D_T$ of any
tree $T$ depends only on the number of edges of $T$. This and several
other variants of $D_T$ have since been studied -- including a
$q$-version, a multiplicative version, and directed versions of these --
and in all cases, $\det(D_T)$ depends only on the edge-data.

In this paper, we introduce a more general framework for bi-directed
weighted trees that has not been studied to date; our work is significant
for three reasons. First, our setting strictly generalizes -- and unifies
-- all variants of $D_T$ studied to date (with coefficients in an
arbitrary unital commutative ring) -- including in
[\textit{Bell Sys. Tech. J.} 1971] above, as well as
[\textit{Adv. Math.} 1978],
[\textit{J. Combin. Theory Ser. A} 2006],
[\textit{Adv. Appl. Math.} 2007],
[\textit{Electron. J. Combin.} 2010],
and others. 

Second, our results strictly improve on state-of-the-art for every
variant of the distance matrix studied to date, even in the classical
Graham--Pollak case. Here are three results for trees:
(1)~We compute the minors obtained by deleting arbitrary equinumerous
sets of pendant nodes (in fact, more general sub-forests) from the rows
and columns of $D_T$, and show these minors depend only on the edge-data
and not the tree-structure. 
(2)~We compute a second function of the distance matrix $D_T$: the sum of
all its cofactors, termed $\cof(D_T)$. We do so in our general setting
\textit{and} in stronger form, after deleting equinumerous pendant nodes
(and more generally) as above -- and show these quantities also depend
only on the edge-data.
(3)~We compute in closed form the inverse of $D_T$, extending a result of
Graham and Lov\'asz~[\textit{Adv. Math.} 1978] and answering an open
question of Bapat--Lal--Pati [\textit{Linear Algebra Appl.} 2006] in
greater generality.

Third, a new technique is to crucially use commutative algebra arguments
-- specifically, Zariski density -- which to our knowledge are hitherto
unused for such matrices/invariants, but are richly rewarding. We also
explain why our setting is ``most general'', in that for more general
edgeweights, $\det(D_T), \cof(D_T)$ depend on the tree structure. In a
sense, this completes the study of the invariants $\det(D_T), \cof(D_T)$
for distance matrices of trees $T$ with edge-data in a commutative ring.

Our proofs use novel results for arbitrary bi-directed strongly connected
graphs $G$: we prove a multiplicative analogue of an additive result by
Graham--Hoffman--Hosoya [\textit{J.~Graph Theory} 1977], as well as a
novel $q$-version thereof. In particular, we provide closed-form
expressions for $\det(D_G)$, $\cof(D_G)$, and $D_G^{-1}$ in terms of
their strong blocks. We then show how this subsumes the classical 1977
result, and provide sample applications to adding pendant trees and to
cycle-clique graphs (including cactus/polycyclic graphs and hypertrees),
subsuming variants in the literature. The final section introduces and
computes a third -- and novel -- invariant for trees, as well as a
parallel Graham--Hoffman--Hosoya type result for our ``most general''
distance matrix $D_T$.
\end{abstract}

\date{\today}
\maketitle

\vspace*{-6mm}

\settocdepth{section}
\tableofcontents
\newpage

\textit{We work over an arbitrary unital commutative ground ring $R$,
unless otherwise specified. For a fixed integer $n \geq 1$, we define
$[n] := \{ 1, \dots, n \}$, ${\bf e} = {\bf e}(n) := (1,\dots,1)^T \in
R^n$, and $J_{n \times n} := {\bf e} {\bf e}^T$. The standard basis of
$R^n$ will be denoted by ${\bf e}_1, \dots, {\bf e}_n$. Also, given a
matrix $A = (a_{ij}) \in R^{n \times n}$ with cofactors $c_{ij} =
(-1)^{i+j} \det A_{ij}$, its adjugate matrix is $\adj(A) :=
(c_{ji})_{i,j=1}^n$.}

\textit{Recall that a tree is a finite connected graph $T = (V,E)$ with
$|E| = |V|-1$, or equivalently, with a unique path between any two
vertices. We write $i \sim j$ to mean that $i \neq j$ and $i,j$ are
adjacent in $T$: $\{ i,j \} \in E$. Given a pendant node $i \in V$, we
denote the unique node adjacent to it by $p(i)$.}

\section{General framework and main results}

This paper contributes to the study of matrices associated to a graph
$G$ -- see e.g.~\cite{Bap,Br,BrCv} for a rich history and detailed
information. Specifically, we work with distance matrices.
Given an unweighted, undirected tree $T$ with node set $V$, and nodes
$v,w \in V$, let $d(v,w)$ denote the integer length of the unique path
from $v$ to $w$; thus $d(v,v) := 0$. Now define the \textit{distance
matrix} $D_T$ to be the $V \times V$ matrix with $(v,w)$ entry $d(v,w)$.
Such matrices and their variants have connections to communication
networks, network flow algorithms, quantum chemistry and molecular
stability, and graph embeddings. For more information, see
e.g.~\cite{GHH,YY2} and the references therein.

We begin with a well-known and striking result from fifty years ago, by
Graham and Pollak in~\cite{Graham-Pollak}. Namely, if $D_T$ denotes the
$n \times n$ path-distance matrix (with entries in $\Z^{\geq 0}$) for a
tree $T$ with node set $[n]$ and edge-set $E$, then $\det(D_T)$ does not
depend on the tree-structure of $T$:
\begin{equation}\label{Egraham-pollak}
\det(D_T) = (-1)^{|E|} |E| 2^{|E|-1},
\end{equation}

This has since been extended in several ways. For instance,
Bapat--Kirkland--Neumann~\cite{BKN} and Yan--Yeh~\cite{YY2} consider
weighted and undirected trees, in which case the distance $d(v,w)$ is
taken as the sum of the edgeweights along the unique path from $v$ to
$w$. This further extends to using the unique directed path if one
considers the more general case of a directed weighted tree with directed
edgeweights $a_e, a'_e$ between the vertices of $e$. Even in this
generality, a result similar to~\eqref{Egraham-pollak} holds:

\begin{theorem}[Bapat--Lal--Pati~\cite{BLP2}]\label{Tblp}
Given a tree $T = (V,E)$ with edgeweights $\{ a_e, a'_e : e \in E \},$
\[
\det (D_T) = (-1)^{|E|} \sum_{e \in E} a_e a'_e \prod_{f \in E, \ f \neq
e} (a_f + a'_f).
\]
\end{theorem}

Note that $\det(D_T)$ is ``doubly symmetric'' in its edgeweights, in that
(a)~it is independent of the tree structure $e \mapsto \{ a_e, a'_e \}$,
in the flavor of~\eqref{Egraham-pollak}; and
(b)~it is also independent of the ``orientation assignment'' $(a_e,
a'_e)$ or $(a'_e, a_e)$ for a pair of oppositely directed edges.

As may be expected, these results soon led to $q$-versions; now one
replaces $a_e$ by $[a_e] := \frac{q^{a_e} - 1}{q-1}$ (and also for
$a'_e$) either for a parameter $q$ (by Yan--Yeh~\cite{YY2}) or for $q$
real, e.g. $q \neq 1$ (by Bapat--Lal--Pati~\cite{BLP1}) or $q \neq \pm 1$
(by Bapat--Rekhi~\cite{BR}). These works considered the undirected case,
which was subsequently extended to bi-directed trees:

\begin{theorem}[Li--Su--Zhang~\cite{LSZ}]\label{Tlsz}
Given a tree $T = (V,E)$ with edgeweights $\{ a_e, a'_e : e \in E \}$,
let its $q$-distance matrix $D_q(T)$ have $(v,w)$ entry $[d(v,w)] :=
\frac{q^{d(v,w)} - 1}{q-1}$ if $q \neq 1$, and $d(v,w)$ otherwise, where
the original distance matrix $(d(v,w))_{v,w \in V}$ is as in
Theorem~\ref{Tblp}. Then
\[
\det (D_q(T)) = (-1)^{|E|} \sum_{e \in E} [a_e] [a'_e] \prod_{f \in E, \
f \neq e} [a_f + a'_f].
\]
\end{theorem}

\noindent Notice again the doubly symmetric formula; also, this
generalizes Theorem~\ref{Tblp}, hence all other ``additive'' variants
mentioned above.

In a different vein, Bapat--Lal--Pati~\cite{BLP1},
Bapat--Rekhi~\cite{BR}, and Yan--Yeh~\cite{YY2} studied the
``$q$-exponential distance matrix'' $(q^{d(v,w)})_{v,w \in V}$. This was
extended by Bapat--Sivasubramanian~\cite{BS2} to arbitrary multiplicative
edgeweights $\{ m_e, m'_e : e \in E \}$, and separately by
Zhou--Ding~\cite{ZD1}:

\begin{theorem}\label{Tmult}
Given a directed tree with weighted edge-data $\{ m_e, m'_e : e \in E
\}$, let $D^*_T$ denote the $V \times V$ matrix with diagonal entries $1$,
and the $(v,w)$-entry (for $v \neq w$) the product of the multiplicative
edgeweights along the unique directed path $: v \to w$. Then
$\displaystyle \det(D^*_T) = \prod_{e \in E} (1 - m_e m'_e)$.
\end{theorem}

\subsection{Novel, general framework}

In this paper we introduce a more general class of weighted trees which
strictly encompass all of the variants studied to date; and for each such
tree (including in the aforementioned settings), we will prove the above
independence result, but in a stronger form.
More precisely, we work with trees with edgeweights in a unital
commutative ring $R$, and so our results hold for all such $R$. In the
most general such version, each edge $\{ i, j \}$ is also
\textit{bi-directed}, and the weights are pairs of labels. Thus, each
edge $\{ i, j \}$ comes with two pairs of elements
\begin{equation}\label{Edirected}
(a_{i \to j}, m_{i \to j}) \qquad \text{and} \qquad
(a_{j \to i}, m_{j \to i}),
\end{equation}
where $a$ and $m$ are to be thought of as ``additive'' and
``multiplicative'', respectively.\medskip

\begin{notation}\label{Dnot}
We work with a tree $T = (V,E)$ where $V = [n]$, and over an arbitrary
unital commutative ring $R$. In the sequel, we will omit the arrows
in~\eqref{Edirected} and merely write $a_{ij}, m_{ij}$ for vertices $i, j
\in V = [n]$. The corresponding \textbf{tree-data} or set of
\textbf{edgeweights} is denoted by
\begin{equation}
\mathcal{T} = \mathcal{T}(T) := \{ (i,j; a_{ij}, m_{ij}; a_{ji}, m_{ji})
\, : \, i \sim j, \  i < j \}.
\end{equation}

As we explain below, all variants in the literature (and in this paper)
involve using $a_{ij} = a_{ji}\ \forall i,j$. Thus, for an edge $e = \{
i, j \}$ we will denote symmetric functions in $m_{ij}, m_{ji}$ using the
symbols $m_e, m'_e$. We will then also write $a_e = a_{ij} = a_{ji}$, and
call the \textit{triple} $(a_e, m_e, m'_e)$ as the \textbf{edgeweight}
for $e \in E$. (As we see below, this is a mild abuse of notation for
$(a_e, \{ m_e, m'_e \})$.)

With this notation, the \textbf{directed distance matrix} associated to
$\mathcal{T}$ is the matrix $\dt$, with $(i,j)$ entry $w_{i \to j}$
defined as follows: let the unique directed path from $i$ to $j$ be given
by
\[
i =: i_0 \ \longrightarrow \ i_1 \ \longrightarrow \ \cdots \
\longrightarrow \ i_k := j, \qquad k \geq 0.
\]
Now define $\dt := (w_{i \to j})_{i,j=1}^n$, where
$w_{i \to i} := 0$; and for $i \neq j$,
\begin{equation}\label{Eedgeweight}
w_{i \to j} := a_{i_0 i_1} (m_{i_0 i_1} - 1) + a_{i_1 i_2} (m_{i_0
i_1} m_{i_1 i_2} - m_{i_0 i_1}) + \cdots
= \sum_{l=0}^{k-1} a_{i_l i_{l+1}} (m_{i_l i_{l+1}} - 1)
\prod_{u=0}^{l-1} m_{i_u i_{u+1}}.
\end{equation}
\end{notation}

\begin{example}[The tree on 3 nodes]
For the tree in Figure~\ref{Fig1}, the corresponding matrix $\dt$ is
\begin{equation}\label{Edtp3}
\begin{pmatrix}
0 & a_{12} (m_{12}-1) & a_{12} (m_{12} - 1) + a_{23} m_{12} (m_{23} - 1) \\
a_{12} (m_{21} - 1) & 0 & a_{23} (m_{23} - 1) \\
a_{23} (m_{32} - 1) + a_{12} m_{32} (m_{21} - 1) & a_{23} (m_{32} - 1) & 0
\end{pmatrix}.
\end{equation}

\begin{figure}[ht]
\begin{tikzpicture}[line cap=round,line join=round,>=triangle 45,x=1.0cm,y=1.0cm]
\draw (1,1)-- (7,1);
\draw (1.5,1.7) node[anchor=north west] {$ (a_{12}, m_{12}) $};
\draw (1.5,1) node[anchor=north west] {$ (a_{12}, m_{21}) $};
\draw (4.5,1.7) node[anchor=north west] {$ (a_{23}, m_{23}) $};
\draw (4.5,1) node[anchor=north west] {$ (a_{23}, m_{32}) $};
\begin{scriptsize}
\fill (1,1) circle (2.5pt);
\draw (1,1.3) node {$1$};
\fill (4,1) circle (2.5pt);
\draw (4,1.3) node {$2$};
\fill (7,1) circle (2.5pt);
\draw (7,1.3) node {$3$};
\end{scriptsize}
\end{tikzpicture}
\caption{The tree $T = P_3$, with general edge-data $\mathcal{T}$.}
\label{Fig1}
\end{figure}
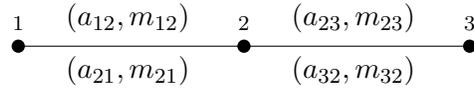
\end{example}

Notice (e.g.~in the above example) that $\dt$ need not be symmetric in
our model. Indeed, this is the case in several previous papers, see
e.g.~\cite{BLP1,BLP2,BS2,GHH,cactoid,ZD1,ZD2}.

\begin{remark}\label{Rspecialize}
To our knowledge, the above ``additive-multiplicative'' setting
encompasses all previous variants in the literature. For example, the
setting of Theorem~\ref{Tlsz} with $q \neq 1$ involves
setting all additive edgeweights to be $1/(q-1)$, and
$m_e = q^{a_e}$, $m'_e = q^{a'_e}$
(with a mild abuse of notation). Thus, a formula for $\det(\dt)$ in our
setting would extend the one in Theorem~\ref{Tlsz}, hence all prior
variants. Note, such a formula would naively work only for $q^{a_e}$ with
$a_e \in \mathbb{Z}$, and so the $q \to 1$ case provides formulas for
$\det(\dt)$ that work only for $a_e \in \mathbb{Z}$. However, we show
that the same formula will work for $a_e$ in any unital commutative ring,
using the power of Zariski density -- a novel technique in this context,
which we introduce and explain in multiple proofs. Thus, our general
framework subsumes all ``additive'' variants of $\dt$ considered in the
literature.

Similarly, the multiplicative setting of Theorem~\ref{Tmult} is recovered
by setting $a_e = 1$. Then the ``multiplicative distance matrix'' is
given by $\dt + J$, where $J$ is the all-ones matrix.
\end{remark}

Our first main result implies as a special case that all of the above
distance matrices have determinants independent of the tree structure,
and depending only on the edge-data. As the preceding remark shows, we
will require a formula for $\det(D + xJ)$, where $x$ is a scalar. This
necessitates recalling a notion studied by
Graham--Hoffman--Hosoya~\cite{GHH}:

\begin{definition}
Given a square matrix $A$, its \textbf{cofactor-sum} $\cof(A)$ is defined
to be the sum of all cofactors of $A$, namely, the sum over all $i,j \in
[n]$ of $(-1)^{i+j} \det(A_{i|j})$. Here, $A_{i|j}$ is the submatrix of
$A$ obtained by deleting the $i$th row and $j$th column.
\end{definition}

We immediately record -- and use below, occasionally without further
reference -- the following straightforward facts from linear algebra. See
e.g.~\cite{Bap,GHH} for a proof of these or of close variants:

\begin{lemma}\label{Ldetcof}
Let $R$ be any unital commutative ring, $A \in R^{n \times n}$ any square
matrix (for $n \geq 1$), and $x$ an indeterminate that commutes with $R$.
Then $\det(A + xJ) = \det(A) + x \cof(A)$. Moreover,
\[
\cof(A) = {\bf e}^T \adj(A) {\bf e} = \cof(A + xJ),
\]
and $\adj(A + xJ) {\bf e}$ does not depend on $x$.
\end{lemma}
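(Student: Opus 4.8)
The plan is to expand $\det(A + xJ)$ multilinearly. Since $J = \mathbf{e}\mathbf{e}^T$ has rank one, writing the $k$th column of $A+xJ$ as $A_{\bullet k} + x\mathbf{e}$ and using multilinearity of the determinant in the columns, every term containing two or more factors of $x$ will have $\det$ of a matrix with two proportional columns (both equal to $\mathbf{e}$), hence vanishes. Therefore $\det(A+xJ)$ is a polynomial of degree at most $1$ in $x$. Its constant term is $\det(A)$ (set $x=0$), and its linear coefficient is $\sum_{k=1}^n \det(A^{(k)})$, where $A^{(k)}$ is $A$ with its $k$th column replaced by $\mathbf{e}$. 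Expanding $\det(A^{(k)})$ along that $k$th column gives $\sum_{i=1}^n (-1)^{i+k}\det(A)_{i,k}$, and summing over $k$ yields exactly $\cof(A)$. This proves the first sentence.

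For the identity $\cof(A) = \mathbf{e}^T \adj(A)\mathbf{e}$: by definition $\adj(A) = (c_{ji})_{i,j}$ with $c_{ij} = (-1)^{i+j}\det(A)_{i,j}$, so $\mathbf{e}^T\adj(A)\mathbf{e} = \sum_{i,j} (\adj A)_{ij} = \sum_{i,j} c_{ji} = \sum_{i,j} c_{ij} = \cof(A)$, the last step being a relabeling of the summation indices. Next, $\cof(A+xJ) = \cof(A)$ follows by applying the already-proved linear-coefficient formula to $A+xJ$ in place of $A$: the linear coefficient of $\det\bigl((A+xJ) + yJ\bigr) = \det\bigl(A + (x+y)J\bigr)$ in $y$ is, on one hand, $\cof(A+xJ)$, and on the other hand, reading off from the linear polynomial $\det(A) + (x+y)\cof(A)$ in the variable $x+y$, it equals $\cof(A)$. (Equivalently, one notes $\cof$ is the linear term of a linear polynomial, hence translation-invariant.)

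Finally, for $\adj(A+xJ)\mathbf{e}$ being independent of $x$: one has the general identity $\adj(B)\,B = \det(B)\,\Id$ for any $B \in R^{n\times n}$ (valid over any commutative ring). Apply this with $B = A + xJ$ and also with $B=A$; subtracting and using $J = \mathbf{e}\mathbf{e}^T$, one obtains
\[
\adj(A+xJ)\,A + x\,\adj(A+xJ)\,\mathbf{e}\,\mathbf{e}^T - \adj(A)\,A = \bigl(\det(A+xJ) - \det(A)\bigr)\Id = x\,\cof(A)\,\Id.
\]
A cleaner route, which I would actually carry out: differentiate (formally, treating $x$ as an indeterminate and using that each entry of $\adj(A+xJ)$ is an affine function of $x$ by the rank-one argument above applied to $(n-1)\times(n-1)$ minors) — or more elementarily, observe that the $i$th entry of $\adj(A+xJ)\mathbf{e}$ is $\sum_j c_{ji}(A+xJ)$, and each cofactor $c_{ji}(A+xJ)$ is $\pm\det$ of an $(n-1)\times(n-1)$ matrix obtained from $A$ by adding $xJ_{(n-1)\times(n-1)}$; the sum over $j$ of these, with signs, is precisely $\cof$ of an $(n-1)\times(n-1)$ submatrix after accounting for the deleted column, which by the already-established translation-invariance of $\cof$ does not depend on $x$. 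The only mild subtlety — and the one point I would be careful about — is bookkeeping the row/column deletions so that "$\sum_j c_{ji}(A+xJ)$" is genuinely recognized as a $\cof(\cdot)$ of a fixed submatrix of $A$ plus $x$ times the all-ones matrix of the right size; once the indices are matched up, the result is immediate from the second sentence of the lemma.
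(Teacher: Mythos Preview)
Your approach differs substantially from the paper's: where the paper works generically over the field $\mathbb{Q}(\{a_{ij}\})$, computes $\det(A+xJ)$ via a Schur-complement expansion of an auxiliary block matrix, invokes the Sherman--Morrison formula for $\adj(A+xJ){\bf e}$, and then appeals to Zariski density to pass to arbitrary rings, you argue directly via multilinearity of the determinant. This is more elementary and works immediately over any commutative ring, with no density argument needed. Your treatment of the first three claims (linearity of $\det(A+xJ)$ with the stated coefficients; the identity $\cof(A)={\bf e}^T\adj(A){\bf e}$; and $\cof(A+xJ)=\cof(A)$ via the substitution $x \mapsto x+y$) is correct and clean.

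However, your handling of $\adj(A+xJ){\bf e}$ has a genuine gap. You assert that $\sum_j c_{ji}(A+xJ)$ can be ``recognized as a $\cof(\cdot)$ of a fixed submatrix of $A$ plus $x$ times the all-ones matrix'', but this is false: for fixed $i$ the summands $c_{ji}(A+xJ) = (-1)^{i+j}\det\bigl((A+xJ)_{j,i}\bigr)$ involve \emph{different} $(n-1)\times(n-1)$ matrices as $j$ varies (a different row is deleted each time), so the sum is not the cofactor-sum of any single matrix. Already for $n=2$, the entry $(\adj(A+xJ){\bf e})_1 = d-b$ is not $\cof$ of any $1\times 1$ matrix. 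The bookkeeping you defer as a ``mild subtlety'' cannot be completed in the form you propose.

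The fix stays within your framework. By cofactor expansion along column $i$, one has
\[
(\adj(A+xJ){\bf e})_i = \sum_j (-1)^{i+j}\det\bigl((A+xJ)_{j,i}\bigr) = \det(M),
\]
where $M$ is the $n\times n$ matrix obtained from $A+xJ$ by replacing column $i$ with ${\bf e}$. Since column $i$ of $M$ equals ${\bf e}$ and each other column equals $A_{\bullet k} + x{\bf e}$, subtracting $x$ times column $i$ from every other column leaves the determinant unchanged and yields the matrix $A$ with column $i$ replaced by ${\bf e}$, which is independent of $x$. This completes the argument without leaving your elementary setting.
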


The quantity $\cof(\dt)$ was first studied by Graham, Hoffman, and Hosoya
in~\cite{GHH} for arbitrary graphs $G$, in the special case of additive
edgeweights $a_e, a'_e$.

\subsection{Main results and novel features}

In this section, we describe the various novel features of the paper.
In a nutshell:
(a)~We prove formulas in our general setting for trees, which specialize
to novel identities for every single variant -- even in the original
Graham--Pollak case of integer-valued unweighted distance matrices.
To our knowledge, such a unification has not been achieved to date.
(b)~As a consequence, the invariance of $\det(\dt), \cof(\dt)$ holds in
our general setting, implying the same in all previous cases.
(c)~We compute the inverse of $\dt$ in our general setting, resolving an
open question of Bapat et al~\cite{BLP1} in greater generality (and
recovering all such formulas for trees in the literature, over arbitrary
unital commutative rings).
(d)~These results use in part, strengthenings of state-of-the-art for
\textit{arbitrary} strongly connected graphs, which will be the subject of
Section~\ref{Sghhmult}.
(e)~In Section~\ref{Sthird} we introduce a novel, edge-multiplicative
invariant for trees, which we term $\kappa(\dt)$. We then formulate and
prove identities relating $\det, \cof$, and $\kappa$.

A satisfying feature: further generalizing our setting leads to
both $\det(\dt), \cof(\dt)$ depending on the tree structure. Thus, in a
sense, our framework is ``\textit{most general}''; see
Example~\ref{Ecounter}. 

Before proceeding to the results, we also stress on a \textbf{novel
technique} that we adopt from commutative algebra: \textit{Zariski
density}. This is immensely rewarding and helps bypass various
artificial technical restrictions; it also can be applied more broadly;
and to our knowledge, it has not been used in this context previously.
See also Remark~\ref{Rzariski} for additional details.\medskip

We next state our main results: two for trees, and one for general graphs
(which has a host of applications that strengthen the existing results in
the literature -- see the next section). Our first result computes in
closed-form both $\det(\dt)$ and $\cof(\dt)$ for
``additive-multiplicative edge-data'' for trees in our general setting
above. In fact, we will compute these quantities for the submatrices of
$\dt$ corresponding to removing equal-sized sets of pendant nodes (and
more generally):

\begin{utheorem}\label{Tmaster}
Suppose a tree $T = (V = [n], E)$ is equipped with edge-data $\mathcal{T}
= \{ (a_{ij} = a_{ji}, m_{ij}, m_{ji}) : \{ i, j \} \in E \}$ as above.
(We write $(a_e, m_e, m'_e)$ for the weights for edges $e \in E$.) 
Let $I,J' \subset V$ satisfy:
(a)~$|I| = |J'| \leq n-3$;
(b)~$|I \cup J'| \leq n-1$;
(c)~$T \setminus I,\ T \setminus J',\ T \setminus (I \cap J')$ are
connected.
Now let $E_\circ := E_{(I \cap J')^c}$ denote the edges in $E$ not among
the common edges adjacent to $I \cap J'$.

As an additional notation, given a $V \times V$ matrix $D$, let
$D_{I|J'}$ denote the submatrix formed by removing the rows and columns
labelled by $I,J'$ respectively. Then $\det (\dt + xJ)_{I|J'}$ depends on
the edge-data but not on the tree structure:
\begin{align}
& \det (\dt + xJ)_{I|J'} \label{Emult-more}\\
& = \begin{cases}
\displaystyle \prod_{e \in E_\circ} (a_e (1 - m_e m'_e)) \left[ x +
\sum_{e \in E_\circ} \frac{(a_e - x) (m_e - 1)(m'_e - 1)}{m_e m'_e - 1}
\right],\notag \quad & \text{if } |I \Delta J'| = 0,\\
\displaystyle \prod_{e \in E_\circ \setminus \{ (p(i_0), i_0), (j_0,
p(j_0)) \}} (a_e (m_e m'_e - 1)) \cdot a_{(p(i_0), i_0)} (a_{(j_0,
p(j_0))} - x) (m_{(p(i_0), i_0)} - 1) (m_{(j_0, p(j_0))} - 1),
\hspace*{-2.6cm}&\vspace*{-3mm} \\
& \text{if } |I \Delta J'| = 2,\\
0, &\text{if } |I \Delta J'| > 2,
\end{cases}
\end{align}
where the denominators (for $I = J'$) are placeholders that cancel with a
factor in $\prod_e (1 - m_e m'_e)$. We also assume that if $|I \Delta J'|
= 2$, then the nodes $i_0, j_0$ are given by $I \setminus J' = \{ i_0 \},
\ J' \setminus I = \{ j_0 \}$.
\end{utheorem}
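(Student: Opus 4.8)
The plan is to prove the formula by induction on $|E|$ (equivalently, on $n$), peeling off a pendant node at each stage and tracking how $\det(\dt + xJ)_{I|J'}$ transforms under such a peel. The base case is a path or star with the minimal number of edges compatible with the constraints on $|I|,|J'|$; there one verifies the three cases by a direct (small) determinant computation of the type exhibited in~\eqref{Edtp3}. For the inductive step, fix a pendant node $v$ of $T$ with neighbour $p(v)$, and distinguish whether $v \in I \cup J'$ or not. The key structural input is that, after a change of basis on $\dt + xJ$ given by subtracting row/column $p(v)$ from row/column $v$ (and using the recursive shape of~\eqref{Eedgeweight}, namely $w_{i\to v} = w_{i \to p(v)} m_{p(v)v} + a_{p(v)v}(m_{p(v)v}-1)$ and the dual relation for $w_{v \to j}$), the matrix $\dt$ for $T$ decomposes in terms of the matrix $D_{\mathcal{T}'}$ for the subtree $T' = T \setminus \{v\}$, up to an explicit rank-one-ish correction along the $v$ and $p(v)$ coordinates. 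This is the mechanism by which the combinatorial tree structure collapses into edge-data only.

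Carrying this out, one performs elementary row and column operations on $(\dt + xJ)_{I|J'}$ to isolate the $v$-indexed and $p(v)$-indexed entries, then Laplace/Schur-expand along them. There are three scenarios to handle: (i) $v \notin I \cup J'$, where the expansion reduces $\det(\dt + xJ)_{I|J'}$ for $T$ to the same quantity for $T'$ times the scalar factor $a_e(1 - m_e m'_e)$ (resp.\ $a_e(m_e m'_e - 1)$) attached to the edge $e = \{v, p(v)\}$, matching the product over $E_\circ$ in~\eqref{Emult-more}; (ii) $v$ lies in exactly one of $I, J'$ — this is the case that produces, or consumes, a factor of the special form $a_{(p(i_0),i_0)}(a_{(j_0,p(j_0))} - x)(m_{(p(i_0),i_0)}-1)(m_{(j_0,p(j_0))}-1)$, and one must check that peeling $v = i_0$ or $v = j_0$ correctly updates $|I \Delta J'|$ and the distinguished edges; (iii) $v \in I \cap J'$, where removing $v$ changes $E_\circ$ by exactly the edge $\{v,p(v)\}$ and one checks the bracketed sum $x + \sum_{e \in E_\circ} \frac{(a_e - x)(m_e-1)(m'_e-1)}{m_e m'_e - 1}$ transforms consistently — here Lemma~\ref{Ldetcof} is the right tool, since it identifies the $x$-linear term with $\cof$, allowing the $\det$ and $\cof$ recursions to be run in parallel rather than separately. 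One also uses that the hypotheses (a)--(c) are inherited by $(T', I \setminus \{v\}, J' \setminus \{v\})$ after a peel, possibly after first peeling a node outside $I \cup J'$ to restore the connectivity/size conditions.

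The main obstacle I anticipate is bookkeeping in case (ii) combined with the vanishing claim $|I \Delta J'| > 2$: one must show that whenever $I \setminus J'$ and $J' \setminus I$ each contain $\geq 2$ nodes, the relevant minor is identically zero, and the cleanest route is probably to produce an explicit linear dependence among rows (or columns) of $(\dt + xJ)_{I|J'}$ coming from the fact that deleting too many pendant-type rows forces two surviving rows to become proportional after the standard row reduction — but making this uniform across all admissible $I, J'$, and compatible with the induction, is delicate. A secondary subtlety is the ``placeholder denominator'' convention in the $|I\Delta J'| = 0$ case: one should either work throughout in the localization $R[\{(m_e m'_e - 1)^{-1}\}]$ and then invoke a Zariski-density/specialization argument (as in the proof of Lemma~\ref{Ldetcof}) to descend to arbitrary commutative $R$, or else clear denominators at the outset and phrase the entire induction with the product $\prod_{e \in E_\circ}(a_e(1 - m_e m'_e))$ already absorbing the offending factors. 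I would adopt the localize-then-specialize approach, as it keeps the recursive formulas clean.
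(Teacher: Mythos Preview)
Your overall architecture (induction on $|E|$ by peeling a pendant, handling the three cases $|I\Delta J'| = 0, 2, >2$, and using Zariski density/localization to manage the placeholder denominators) is reasonable, and parts of it match the paper: the vanishing for $|I\Delta J'| > 2$ is indeed proved by exhibiting an explicit linear dependence among the retained rows after the standard row reductions, and the Zariski-density specialization is exactly the paper's device.

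The genuine gap is your scenario (i). It is \emph{not} true that peeling a pendant $v \notin I \cup J'$ reduces $\det(\dt + xJ)_{I|J'}$ to the same quantity for $T' = T \setminus \{v\}$ times the single scalar $a_e(1 - m_e m'_e)$. If you look at the target formula in the $|I\Delta J'|=0$ case, the bracketed factor $\bigl[x + \sum_{e \in E_\circ} \frac{(a_e - x)(m_e-1)(m'_e-1)}{m_e m'_e - 1}\bigr]$ is a \emph{sum} over edges, so the ratio $\det(D_{\mathcal{T}} + xJ)/\det(D_{\mathcal{T}'} + xJ)$ carries an extra factor $\overline{\alpha}/\alpha$ (in the paper's notation $\alpha = \sum_e \frac{a_e(m_e-1)(m'_e-1)}{m_e m'_e -1}$) that depends on all edges, not just the peeled one. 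Concretely, when you write $D_{\mathcal{T}}$ in block form bordered by the pendant row/column and take the Schur complement, one of the cross terms is $a_e^2(m_e-1)(m'_e-1)\,{\bf e}^T \adj(D_{\mathcal{T}'})\,{\bf m}_{\bullet \to p(v)}$, and this does not simplify without further input. The paper's fix is to prove, by a \emph{simultaneous} induction, the auxiliary identities
\[
{\bf e}^T \adj(D_{\mathcal{T}_k})\,{\bf m}^{(k)}_{\bullet \to j} = \prod_{e \in E_k} a_e(1 - m_e m'_e), \qquad
{\bf e}^T \adj(D_{\mathcal{T}_k}) = \prod_{e \in E_k} a_e(1 - m_e m'_e)\,(\tauin^{[k]})^T,
\]
which are exactly what is needed to evaluate that cross term. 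Your proposal does not anticipate needing any such identity, and without it the Schur expansion in (i) will not close up.

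A secondary point: for $|I\Delta J'| = 2$ the paper does not peel directly but instead uses Dodgson condensation, combining the already-established $|I\Delta J'|=0$ formula with the vanishing for $|I\Delta J'|>2$. Your scenario (ii) could perhaps be made to work, but you should be aware that a clean one-step peel of $i_0$ or $j_0$ is again not obviously available for the same reason as above, and the condensation route is what actually makes this case short.
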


Theorem~\ref{Tmaster} says more precisely that $\det (\dt + xJ)_{I|J'}$
depends only on the edge-data of the edges in $I \setminus J'$, $J'
\setminus I$, $I \cap J'$, and $E \setminus (I \cap J')$. 
A curious feature is that $\cof(\cdot)$ for $|I \Delta J'| = 2$ is
asymmetric in the additive edge-data for $i_0, j_0$.
Also, the (possibly non-optimal) choice of notation for the index set $J'
\subset [n]$ is to avoid conflict with the all-ones matrix $J$.

We now make several clarifying remarks:

\begin{remark}\label{Rkaushal}
Equation~\eqref{Emult-more} computes principal as well as non-principal
minors of $\dt$. The case when $I \neq J'$ (and both are non-singleton)
of non-principal minors reveals new information about $\dt$ even in the
original setting of Graham--Pollak, hence for every other variant
studied.
\end{remark}

\begin{remark}
Theorem~\ref{Tmaster} includes the originally sought-for case of $I = J'
= \emptyset$:
\begin{align}\label{Emaster}
\begin{aligned}
\det(\dt) = &\ \prod_{e \in E} (a_e (1 - m_e m'_e)) \sum_{e \in E}
\frac{a_e(m_e - 1)(m'_e - 1)}{m_e m'_e - 1},\\
\cof(\dt) = &\ \prod_{e \in E} (a_e (1 - m_e m'_e)) \left[ 1 + \sum_{e
\in E} \frac{(a_e - 1) (m_e - 1)(m'_e - 1)}{m_e m'_e - 1} \right].
\end{aligned}
\end{align}
Note, the invariant $\det(\dt)$ depends only on $(a_e, \{ m_e, m'_e
\})_{e \in E}$, while $\cof(\dt)$ depends on even less: on $\{ a_e : e
\in E \}$ and $\{ \{ m_e, m'_e \} : e \in E \}$ -- and so this holds for
every variant studied to date.
\end{remark}

\begin{remark}
By Theorem~\ref{Tlsz}, the formulas for $\det(\cdot), \cof(\cdot)$ for
the classical distance matrix follow from from their $q$-versions. We
observe in Remark~\ref{Rmiddle} that these $q$-versions follow themselves
from the multiplicative versions where $a_e = a'_e = 1/(q-1)$, but using
$\dt = D^*_T - J$ instead of the multiplicative matrix $D^*_T$.
Thus by Lemma~\ref{Ldetcof}, using $\det(\cdot)$ alone does not help
unify the host of previously studied variants; one crucially has to also
use $\cof(\cdot)$. To our knowledge, such a unification (and its
generalization) had not previously been achieved in the literature.
\end{remark}

\begin{remark}\label{Rmiddle}
As anticipated in Remark~\ref{Rspecialize}: Theorem~\ref{Tblp} (where
$a_e \neq a'_e$) can be deduced from the $a_e = a'_e$
formula~\eqref{Emult-more} as follows: First work over $\mathbb{Q}(q)$
for a parameter $q$, and set all additive edgeweights to be $1/(q-1)$,
and $m_e = q^{a_e}, m'_e = q^{a'_e}$. For integer values of $a_e, a'_e$,
one can use~\eqref{Emult-more} for $I=J=\emptyset$ to deduce
Theorem~\ref{Tlsz}.
Now evaluate at $q=1$ to obtain Theorem~\ref{Tblp}. This works for
weights $a_e, a'_e \in \mathbb{Z}$; the extension to $a_e, a'_e$ in a
commutative ring $R$ follows by Zariski density.
\end{remark}

Our next contribution (after Theorem~\ref{Tmaster}) shows that our
setting is the ``most general'' possible:

\begin{example}\label{Ecounter}
Theorem~\ref{Tmaster} and its special case (by Remark~\ref{Rmiddle})
Theorem~\ref{Tblp} say that in the two cases when
(a)~$m_{ij}, m_{ji}$ need not coincide, while $a_e = a_{ij} = a_{ji}$; and
(b)~$a_{ij}, a_{ji}$ need not coincide, while $m_e = m_{ij} = m_{ji} = q$
and $q \to 1$ -- the terms $\det(\dt), \cof(\dt)$ depend only on the
edge-data, but not on the tree structure.
It is natural to ask if this holds for the remaining variant of weighted
bi-directed trees: where $a_{ij} \neq a_{ji}$ and $m_{ij} = m_{ji} \neq
1$ (or even more generally, $m_{ij} \neq m_{ji}$). The following example
shows that this does not happen in the (perhaps) simplest imaginable
such situation:
suppose $a_{ij}$ and $a_{ji}$ are allowed to be unequal, and set
$m_e = m_{ij} = m_{ji} = q, \ \forall e \in E$.

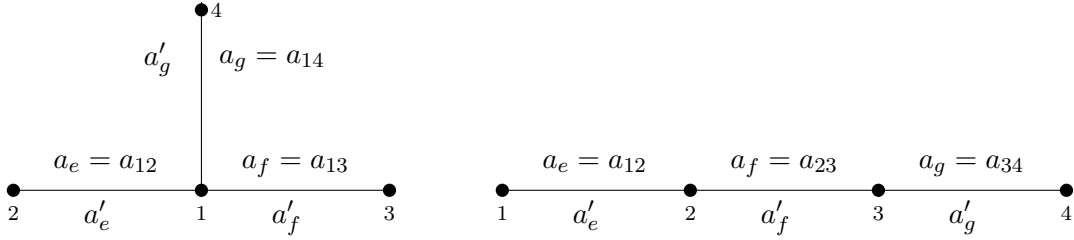
\begin{figure}[ht]
\begin{tikzpicture}[line cap=round,line join=round,>=triangle 45,x=1.0cm,y=1.0cm]
\draw (1,1)-- (6,1);
\draw (1.8,1) node[anchor=north west] {$ a'_e $};
\draw (1.4,1.6) node[anchor=north west] {$ a_e = a_{12} $};
\draw (3.9,1.6) node[anchor=north west] {$ a_f = a_{13} $};
\draw (4.3,1) node[anchor=north west] {$ a'_f $};
\draw (2.6,3.1) node[anchor=north west] {$ a'_g $};
\draw (3.6,3) node[anchor=north west] {$ a_g = a_{14} $};
\begin{scriptsize}
\fill (1,1) circle (2.5pt);
\draw (1,0.7) node {$2$};
\fill (3.5,1) circle (2.5pt);
\draw (3.5,0.7) node {$1$};
\fill (6,1) circle (2.5pt);
\draw (6,0.7) node {$3$};
\draw (3.5,1)-- (3.5,3.5);
\fill (3.5,3.4) circle (2.5pt);
\draw (3.7,3.4) node {$4$};
\end{scriptsize}
\draw (7.5,1)-- (15,1);
\draw (7.9,1.6) node[anchor=north west] {$ a_e = a_{12} $};
\draw (10.4,1.6) node[anchor=north west] {$ a_f = a_{23} $};
\draw (12.9,1.6) node[anchor=north west] {$ a_g = a_{34} $};
\draw (8.3,1) node[anchor=north west] {$ a'_e $};
\draw (10.8,1) node[anchor=north west] {$ a'_f $};
\draw (13.3,1) node[anchor=north west] {$ a'_g $};
\begin{scriptsize}
\fill (7.5,1) circle (2.5pt);
\draw (7.5,0.7) node {$1$};
\fill (10,1) circle (2.5pt);
\draw (10,0.7) node {$2$};
\fill (12.5,1) circle (2.5pt);
\draw (12.5,0.7) node {$3$};
\fill (15,1) circle (2.5pt);
\draw (15,0.7) node {$4$};
\end{scriptsize}

\end{tikzpicture}
\caption{The trees $K_{1,3}$ and $P_4$, with all $m_e = m_{ij} = m_{ji} =
q$.}
\label{Fig2}
\end{figure}

Then a straightforward computation shows that even for the two
non-isomorphic graphs with four nodes with edge-data as above, neither
$\det(\dt)$ nor $\cof(\dt)$ agree (unless $q=1$). \qed
\end{example}

Our next main result for trees provides a closed-form expression for
$\dt^{-1}$ in our general setting, thereby subsuming the special cases
worked out in~\cite{BKN}--\cite{BS2}, \cite{GL,ZD1,ZD2}.

\begin{utheorem}\label{Tinverse}
Suppose $T = (V = [n], E)$ is equipped with edge-data $\mathcal{T} = \{
(a_{ij} = a_{ji}, m_{ij}, m_{ji}) : \{ i, j \} \in E \}$, such that
$a_{ij}, m_{ij} m_{ji} - 1$, and $\det(\dt)$ are invertible in $R$.
Define the vectors $\tauin, \tauout \in R^V$ to have $i$th coordinates:
\begin{equation}\label{Etauinout}
\tauin(i) := 1 - \sum_{j : j \sim i} \frac{m_{ji} (m_{ij} - 1)}{m_{ij}
m_{ji} - 1}, \qquad
\tauout(i) := 1 - \sum_{j : j \sim i} \frac{m_{ij} (m_{ji} -
1)}{m_{ij} m_{ji} - 1},
\end{equation}
and also define the {\em Laplacian matrix} $L_\mathcal{T} \in R^{|V|
\times |V|}$ via:
\begin{equation}
(L_\mathcal{T})_{ij} = \begin{cases}
\displaystyle \frac{-m_{ij}}{a_{ij} (m_{ij} m_{ji} - 1)}, \qquad &
\text{if } i \sim j;\vspace*{3mm}\\
\displaystyle \sum_{k \sim i} \frac{m_{ki}}{a_{ik} (m_{ik} m_{ki} - 1)},
& \text{if } i = j;\vspace*{2mm}\\
0, & \text{otherwise}.
\end{cases}
\end{equation}
Then there exists a matrix $C_\mathcal{T} \in R^{|V| \times |V|}$
(see~\eqref{EBmatrix} below) such that
\begin{equation}
\dt^{-1} = \left( \sum_{e \in E} \frac{a_e (m_e - 1)(m'_e - 1)}{m_e m'_e
- 1} \right)^{-1} \tauout \tauin^T - L_\mathcal{T} + C_\mathcal{T}
\diag(\tauin).
\end{equation}
\end{utheorem}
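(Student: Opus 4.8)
The plan is to verify the claimed formula for $\dt^{-1}$ by direct multiplication, i.e., by showing that the product of $\dt$ with the proposed matrix
\[
M := \left( \sum_{e \in E} \tfrac{a_e (m_e - 1)(m'_e - 1)}{m_e m'_e - 1} \right)^{-1} \tauout \tauin^T - L_\mathcal{T} + C_\mathcal{T} \diag(\tauin)
\]
equals the identity $\Id_n$. Since $\det(\dt)$ is invertible, a one-sided inverse suffices. The first step is to understand the action of $L_\mathcal{T}$ on $\dt$: the Laplacian $L_\mathcal{T}$ is essentially supported on edges, so $(\dt L_\mathcal{T})_{ij}$ telescopes along paths in the tree. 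Concretely, writing the $(i,j)$ entry of $\dt$ as the path-sum in~\eqref{Eedgeweight}, the product $\dt L_\mathcal{T}$ should reduce — after the telescoping cancellations characteristic of tree-distance computations — to a matrix that is the identity up to a rank-one correction and a correction supported on the ``defect'' vectors $\tauin, \tauout$. This is the analogue, in our general bi-directed multiplicative–additive setting, of the classical identity $D_T L_T = $ (something simple) used by Graham–Lovász and Bapat–Lal–Pati.

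The second step is to pin down the correction matrix $C_\mathcal{T}$. I expect $C_\mathcal{T}$ to be defined (in the promised~\eqref{EBmatrix}) precisely so that the edge-local telescoping identity $\dt L_\mathcal{T} = \Id - (\text{rank-one in }\tauout) - \dt C_\mathcal{T}\diag(\tauin)$ holds on the nose; that is, $C_\mathcal{T}$ absorbs exactly the terms that fail to telescope because the path from $i$ to $j$ passes through a non-leaf vertex. One then computes $\dt M$ using this identity together with $\dt \tauout \tauin^T$: the vector $\dt\tauout$ should be (up to the scalar $\sum_e \tfrac{a_e(m_e-1)(m'_e-1)}{m_em'_e-1}$) a constant vector, by a telescoping argument on each coordinate $\sum_k w_{i\to k}\tauout(k)$, so that the rank-one term contributes exactly the missing rank-one piece. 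Combining the three pieces, the scalar normalization is forced, and $\dt M = \Id_n$ follows.

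The third step is the passage to an arbitrary commutative ring $R$. As in the proof of Lemma~\ref{Ldetcof}, I would first establish the identity over the field $R_0(\cdot)$ of rational functions in indeterminates $a_{ij}, m_{ij}$ (where $\dt$, the relevant $a_{ij}$, the $m_{ij}m_{ji}-1$, and the edge-sum are automatically invertible), where all the manipulations above — inverting $\dt$, Sherman–Morrison-type rank-one updates, and division by the edge-sum — are legitimate. Clearing denominators turns $\dt M = \Id$ into a polynomial identity in $\Z[a_{ij}, m_{ij}]$, valid on the common nonzero-locus of finitely many nonzero polynomials; a Zariski-density argument (Lemma~\ref{Lzariski}) then promotes it to an identity of polynomials, and specialization yields the result over any $R$ in which the stated invertibility hypotheses hold.

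The main obstacle, I expect, is bookkeeping: producing the correct closed form for $C_\mathcal{T}$ and verifying the edge-local telescoping identity $\dt L_\mathcal{T} = \Id - (\text{rank-one}) - \dt C_\mathcal{T}\diag(\tauin)$ entry-by-entry. The subtlety is that in the purely additive symmetric case the distance matrix times the Laplacian is almost the identity with only a rank-one defect, but here the multiplicative weights $m_{ij}$ break the clean telescoping — the ``weight'' $w_{i\to k}$ of a path accumulates products $\prod m$, so the cancellation along a path from $i$ through an internal vertex $t$ to $k$ leaves residual terms proportional to how much $\tauin, \tauout$ deviate from $\mathbf{e}$ at $t$. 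Isolating these residuals and recognizing them as exactly $\dt C_\mathcal{T}\diag(\tauin)$ for the right $C_\mathcal{T}$ is the crux; once the identity is set up correctly, the remaining algebra is the routine ring-theoretic descent sketched above.
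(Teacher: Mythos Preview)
Your plan is a legitimate alternative to the paper's argument, but the two proofs are organized quite differently.

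The paper does \emph{not} verify $\dt M = \Id$ by direct multiplication. Instead it proceeds by induction on $|V|$: assuming the formula for $D^{-1}$ on a tree with $k$ nodes, it attaches a pendant node $k{+}1$, writes $\overline{D} = D_{\mathcal{T}_{k+1}}$ in $2\times 2$ block form~\eqref{EDbar}, inverts it via the Schur-complement block-inverse formula~\eqref{EDinv}, and then checks block by block (Steps~3--5) that each of the four blocks of $\overline{D}^{-1}$ matches the corresponding block of $\frac{1}{\overline{\alpha}}\,\overline{\tauout}\,\overline{\tauin}^T - \overline{L} + \overline{C}\,\diag(\overline{\tauin})$. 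The preliminary identities in Steps~1--2 (notably~\eqref{Eprojesh1}--\eqref{Eprojesh4}) are there to make those four block-checks tractable, not to feed a global multiplication.

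Your direct-verification route should work: the claim $\dt\,\tauout = \at\,{\bf e}$ is the ``transpose'' of the paper's identity~\eqref{Eidentity5} (obtained by the same edge-telescoping argument with the roles of $m_e,m'_e$ swapped), and with it $\dt M = \Id$ reduces to the single matrix identity $\dt(-L_\mathcal{T} + C_\mathcal{T}\diag(\tauin)) = \Id - {\bf e}\,\tauin^T$. What this buys you is conceptual simplicity---no induction, no Schur complements---at the cost of one large entrywise computation: you must expand $(\dt L_\mathcal{T})_{il} = \sum_{j\sim l}\frac{m_{jl}}{a_{jl}(m_{jl}m_{lj}-1)}(w_{i\to l}-w_{i\to j})$, split over whether $j$ or $l$ is closer to $i$, and then match the residual against the explicit $C_\mathcal{T}$ of~\eqref{EBmatrix} (whose entries depend on which branch $T_{i\to k}$ the column index lies in). This is exactly the ``bookkeeping'' you flag, and it is genuinely the whole proof---your proposal currently defers it rather than carries it out. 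The paper's inductive route trades that single global computation for several smaller block-matching computations, each of which is more mechanical once the auxiliary identities are in place; it also yields the determinant formula~\eqref{Ealtdet} as a byproduct (Remark~\ref{Raltdet}), which your approach would not produce directly. Your Zariski-density step~3 matches the paper's treatment.
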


This result is proved in Section~\ref{Sinverse}, and in a sense is the
strongest result in the paper, since as we explain, it -- and its proof
-- implies most of Theorem~\ref{Tmaster}, and hence all preceding
results.\medskip

Having dealt with trees, our final (until the last section) main result
concerns distance matrices of arbitrary finite directed, strongly
connected, weighted graphs. We mention some definitions for completeness,
referring the reader to e.g.~\cite{Harary} for the basics of graph
theory.

\begin{definition}
Let $G$ be a directed or undirected graph with node-set $V$.
\begin{enumerate}
\item A node $v \in V$ is said to be a 
\textit{cut-vertex} if its removal increases the number of (undirected)
graph components of $G$. Denote the set of cut-vertices by $V^{cut}$.

\item The maximal induced subgraphs of $G$ with no cut-vertices are
called the \textit{strong blocks}.

\item If $G$ is undirected (directed), we say $G$ is \textit{(strongly)
connected} if for all $v \neq w \in V$ there exists a (directed) path
from $v$ to $w$.
\end{enumerate}
\end{definition}

Now suppose $G$ is a finite directed, strongly connected, weighted graph.
Distance matrices $D_G$ with trivial multiplicative edgeweights --
i.e.~$m_e = m'_e = q\ \forall e \in E(G)$ and $q \to 1$ as in the setting
of Theorem~\ref{Tblp} -- were studied by
Graham--Hoffman--Hosoya~\cite{GHH}, and they obtained beautiful formulas
for $\det(D_G), \cof(D_G)$ in terms of the strong blocks of $G$:
\begin{align}\label{Eghh}
\begin{aligned}
\cof(D_G) = &\ \prod_j \cof(D_{G_j}),\\
\det(D_G) = &\ \sum_j \det(D_{G_j}) \prod_{i \neq j} \cof(D_{G_i}).
\end{aligned}
\end{align}
Here $G_j$ (and $G_i$) run over the strong blocks of $G$. In particular
when $\cof(D_G) \neq 0$, one has:
\[
\frac{\det(D_G)}{\cof(D_G)} = \sum_j \frac{\det(D_{G_j})}{\cof(D_{G_j})}.
\]

We next present similar formulas for $\cof(D_G)$ and $\det(D_G)$ in the
parallel multiplicative setting. More generally, we now work with
distance matrices whose $(i,j)$ entries are themselves matrices:

\begin{definition}
Fix a unital commutative ring $R$ and a directed, strongly connected
graph $G$ with vertex set $V = [n]$. For us, a \textit{product distance}
on $G$ is a choice of integers $k_1, \dots, k_n \geq 1$ and matrices
$\eta(i,j) \in R^{k_i \times k_j}$ such that
(a)~$\eta(v,v) := \Id_{k_v}$ for all cut-vertices $v$, and
(b)~for $i,j \in [n]$, if every directed path from $i \to j$ passes
through the cut-vertex $v$, then $\eta(i,j) = \eta(i,v) \eta(v,j)$.
Here the \textit{product distance matrix} is the $K \times K$ block
matrix with $(i,j)$ block $\eta(i,j)$, where $K := \sum_{v \in V} k_v$.
\end{definition}

Product distance matrices have been previously studied -- mostly for
trees~\cite{YY2,ZD1}, but also in~\cite{BS2} for general graphs, with
$k_i = 1\ \forall i$. 
The above definition simultaneously extends the settings in all of these
works. Now in this overarching setting -- and for arbitrary graphs -- we
will show:

\begin{utheorem}\label{Tghhmult}
Suppose $G = (V,E)$ is a finite directed, strongly connected, weighted
graph, with additive edgeweights $a_e = a'_e = 1\ \forall e \in E$.
Suppose $G$ has strong blocks $G_j$, and $D^*_G$ denotes any product
distance matrix for $G$, with principal submatrices $D^*_{G_j}$
corresponding to $G_j$. Then,
\begin{align}\label{Eghhdetcof}
\begin{aligned}
\det(D^*_G) = &\ \prod_j \det(D^*_{G_j}),\\
\cof(D^*_G) = &\ \sum_j \cof(D^*_{G_j}) \prod_{i \neq j} \det(D^*_{G_i})
- \det(D^*_G) \sum_{v \in V} k_v (\# \{j : v \in G_j\} - 1),
\end{aligned}
\end{align}
where the final sum may be taken over only the subset of cut-vertices. In
particular, and parallel to the setting of~\cite{GHH}, if $D^*_G$ is
invertible, and the integers $k_v = k\ \forall v \in V^{cut}$ are all
equal, then
\begin{equation}\label{Eghh1}
\frac{\cof(D^*_G)}{\det(D^*_G)} - k= \sum_j \left(
\frac{\cof(D^*_{G_j})}{\det(D^*_{G_j})} - k \right).
\end{equation}

Also: if $D^*_{G_j}$ is invertible for all $j$, then
\begin{equation}\label{Eghhinv}
(D^*_G)^{-1} = \sum_j \left[ (D^*_{G_j})^{-1} \right]_j - \sum_{v \in V}
(\# \{j : v \in G_j\} - 1) \cdot [\Id_{k_v}]_v,
\end{equation}
where $[A]_j$ denotes the $K \times K$ matrix with the matrix $A$
occurring in the rows and columns corresponding to the nodes of $G_j$,
and zeros in the other entries (and similarly for $[\Id_{k_v}]_v$).
\end{utheorem}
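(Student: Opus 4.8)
The plan is to induct on the number $b$ of strong blocks of $G$. The base case $b=1$ is immediate: the block-indexed sums reduce to a single term, and the correction term $\sum_{v}k_v(\#\{j:v\in G_j\}-1)$ vanishes since every vertex then lies in exactly one block. For $b\geq 2$, pick a strong block $G_1$ that is a leaf of the block-cut tree of $G$, let $v$ be the unique cut-vertex of $G$ contained in $G_1$, and set $G':=G_1$ and $G'':=\bigcup_{j\geq 2}G_j$; then $V(G')\cap V(G'')=\{v\}$, no edge of $G$ joins $V(G')\setminus\{v\}$ to $V(G'')\setminus\{v\}$, and $G''$ is strongly connected with strong blocks $G_2,\dots,G_b$. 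One first checks that $D^*_{G''}$ (as a principal submatrix of $D^*_G$) is again a product distance matrix on $G''$ — the only point is that a vertex separating two nodes of $G''$ inside $G''$ still separates them inside $G$, because $G'$ is attached at the single vertex $v$. It therefore suffices to establish the three ``two-block'' identities
\[
\det(D^*_G)=\det(D^*_{G'})\det(D^*_{G''}),
\]
\[
\cof(D^*_G)=\cof(D^*_{G'})\det(D^*_{G''})+\det(D^*_{G'})\cof(D^*_{G''})-k_v\det(D^*_{G'})\det(D^*_{G''}),
\]
\[
(D^*_G)^{-1}=\bigl[(D^*_{G'})^{-1}\bigr]_{G'}+\bigl[(D^*_{G''})^{-1}\bigr]_{G''}-[\Id_{k_v}]_v,
\]
and then feed them into the induction; the passage from $G''$ to $G$ in the last two formulas is the bookkeeping fact that for every vertex $w$ the multiplicities $\#\{j:w\in G_j\}$ relative to $G$ and to $G''$ coincide except at $w=v$, where they differ by one.

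For the two-block step, order $V(G)$ as $V(G')\setminus\{v\}$, then $v$, then $V(G'')\setminus\{v\}$, and in the corresponding block form write $D^*_{G'}=\bigl(\begin{smallmatrix}P'&u\\w'&\Id_{k_v}\end{smallmatrix}\bigr)$ and $D^*_{G''}=\bigl(\begin{smallmatrix}\Id_{k_v}&w\\u'&P''\end{smallmatrix}\bigr)$, where $u,u'$ are the block-columns collecting the $\eta(i,v)$, $w,w'$ the block-rows collecting the $\eta(v,j)$, and the $(v,v)$ block equals $\Id_{k_v}$ because $v$ is a cut-vertex. The relation $\eta(i,j)=\eta(i,v)\eta(v,j)$ for pairs separated by $v$ then forces
\[
D^*_G=\begin{pmatrix}P'&u&uw\\w'&\Id_{k_v}&w\\u'w'&u'&P''\end{pmatrix}.
\]
Subtracting $u'$ times the $v$-block-row from the block-rows indexed by $V(G'')\setminus\{v\}$ makes those rows vanish outside their own block-columns, where they become the Schur complement $P''-u'w$ of the (invertible) $(v,v)$ block of $D^*_{G''}$; block upper-triangularity and $\det(P''-u'w)=\det(D^*_{G''})$ give the determinant identity. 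For the inverse I would verify directly, by block multiplication, that with $(D^*_{G'})^{-1}=\bigl(\begin{smallmatrix}\alpha'&\beta'\\\gamma'&\delta'\end{smallmatrix}\bigr)$ and $(D^*_{G''})^{-1}=\bigl(\begin{smallmatrix}\alpha&\beta\\\gamma&\delta\end{smallmatrix}\bigr)$, the matrix $M:=\bigl(\begin{smallmatrix}\alpha'&\beta'&0\\\gamma'&\delta'+\alpha-\Id_{k_v}&\beta\\0&\gamma&\delta\end{smallmatrix}\bigr)$ satisfies $MD^*_G=\Id$: each of the nine block equations collapses, after the factors $uw$ and $u'w'$ split off, to identities contained in $(D^*_{G'})^{-1}D^*_{G'}=\Id$ and $(D^*_{G''})^{-1}D^*_{G''}=\Id$ (for instance the $(v,v)$ block of $MD^*_G$ is $(\gamma'u+\delta')+(\alpha+\beta u')-\Id_{k_v}=\Id_{k_v}$). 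Since $M$ is exactly $[(D^*_{G'})^{-1}]_{G'}+[(D^*_{G''})^{-1}]_{G''}-[\Id_{k_v}]_v$, this proves the inverse identity whenever the matrices involved are invertible.

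The two-block $\cof$ identity then follows from Lemma~\ref{Ldetcof}: over a field in which $D^*_G$ (equivalently $D^*_{G'}$ and $D^*_{G''}$) is invertible one has $\cof(A)=\det(A)\,{\bf e}^TA^{-1}{\bf e}$, and the inverse identity gives ${\bf e}^T(D^*_G)^{-1}{\bf e}={\bf e}^T(D^*_{G'})^{-1}{\bf e}+{\bf e}^T(D^*_{G''})^{-1}{\bf e}-k_v$; multiplying by $\det(D^*_G)=\det(D^*_{G'})\det(D^*_{G''})$ yields the claim. To pass from ``a field where everything is invertible'' to an arbitrary unital commutative ring $R$, I would run the specialization argument from the proof of Lemma~\ref{Ldetcof}: take the $\eta(i,j)$ for pairs not separated by a cut-vertex to be independent indeterminates over $\Z$ (the remaining $\eta$'s being the forced products); then $\det(D^*_G)$ and each $\det(D^*_{G_j})$ are nonzero polynomials, both sides of each identity are polynomials, and they agree on the Zariski-dense common nonvanishing locus of these determinants, hence in $\Z[\text{indeterminates}]$, hence over every $R$ by specialization — with~\eqref{Eghhinv} retaining its stated invertibility hypotheses. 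Finally~\eqref{Eghh1} is the $\cof$-to-$\det$ ratio, using that when all $k_v$ with $v\in V^{cut}$ equal $k$ one has $\sum_{v\in V^{cut}}(\#\{j:v\in G_j\}-1)=b-1$.

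The step requiring the most care is the reduction to two blocks together with the two global bookkeeping identities — in particular checking that $D^*_{G''}$ is genuinely a product distance matrix and that the vertex-multiplicities $\#\{j:w\in G_j\}$ telescope correctly when a leaf block is detached. The block-multiplication check $MD^*_G=\Id$ is lengthy but entirely mechanical once the multiplicative relations are in hand, and the field-to-ring transfer is routine given Lemma~\ref{Ldetcof} and the density lemma it invokes.
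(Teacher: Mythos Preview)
Your proof is correct and follows essentially the same approach as the paper: reduce by induction (you induct on the number of blocks, the paper on the number of cut-vertices, but both amount to peeling off one piece at a cut-vertex), write $D^*_G$ in the same $3\times 3$ block form, compute $\det$ via block-triangularization and Schur complement, verify the inverse by direct block multiplication, derive the $\cof$ identity from the inverse via $\cof(A)=\det(A)\,{\bf e}^TA^{-1}{\bf e}$ and extend to general rings by Zariski density. Your treatment is slightly more explicit about the bookkeeping (that $D^*_{G''}$ remains a product distance matrix, and that the multiplicities telescope correctly), and you add the observation $\sum_{v\in V^{cut}}(\#\{j:v\in G_j\}-1)=b-1$ to justify~\eqref{Eghh1}, but the core argument is the same.
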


In the special case $k_v = 1\ \forall v$, the formulas here for $\det,
\cof$ resemble those in~\eqref{Eghh}. In fact our results strengthen the
classical Graham--Hoffman--Hosoya identities~\eqref{Eghh} in multiple
ways: first, we show in the next section how our multiplicative formulas
imply the additive ones in~\eqref{Eghh}.
Second, in the final Section~\ref{Sthird} we propose (and prove) similar
formulas to Theorem~\ref{Tghhmult} in our current, general setting --
with scalar entries; and then show how these too specialize to the
identities~\eqref{Eghh}. This uses a third, novel invariant
$\kappa(\dt)$.

\begin{remark}
For example, Theorem~\ref{Tghhmult} implies explicit formulas for
$\det(D^*_T)$, $\cof(D^*_T)$, $(D^*_T)^{-1}$ for $G = T$ an arbitrary
tree, as in~\cite{BR,YY2,ZD1}.
This is because the strong blocks of $T$ are precisely its edges, and it
is easy to compute the above quantities for $2 \times 2$ block matrices
-- in fact of the form $\begin{pmatrix} \Id_{k_1} & M_{12} \\ M_{21} &
\Id_{k_2} \end{pmatrix}$. Note, the explicit formulas for $\det(D^*_T),
(D^*_T)^{-1}$ for trees in~\cite{BR,YY2,ZD1} -- for distance matrices
$D^*_T$ with matrix weights as above -- are stated in a different form.
\end{remark}

\begin{remark}
In this paper, we mainly focus on advancing the state-of-the-art
(e.g.~non-principal minors) for matrices $\dt$ over arbitrary unital
commutative rings $R$. Indeed, the commutative case is the far
better-studied and mathematically active framework for distance matrices
and their variants. That said, there are also results in the literature
over non-commutative rings $R'$. See e.g.~\cite{BS,ZD1,ZD2}, several of
which work in fact with $R' = R^{k \times k}$ -- precisely the setting in
our Theorem~\ref{Tghhmult}.
\end{remark}

For completeness, we conclude by mentioning a generalization of the
formula~\eqref{Eghh1} for invertible $D^*_G$. Namely, fix a cut-vertex
$v$. Then every block $G_j$ has a unique cut-vertex $v(j)$ which is
closest to $v$. Now~\eqref{Eghh1} extends to the case of possibly unequal
$k_v$ as follows:
\begin{equation}\label{Eghh2}
\frac{\cof(D^*_G)}{\det(D^*_G)} - k_v = \sum_j \left(
\frac{\cof(D^*_{G_j})}{\det(D^*_{G_j})} - k_{v(j)} \right), \qquad
\forall v \in V^{cut}.
\end{equation}

\subsection*{Organization of the paper}

In Section~\ref{Sghhmult} we prove Theorem~\ref{Tghhmult}, followed by
several applications:
\begin{itemize}
\item The classical Graham--Hoffman--Hosoya formulas~\eqref{Eghh} -- more
generally, a novel $q$-variant.

\item Attaching finitely many pendant trees to $G$, and showing the
independence of $\det, \cof$ from the locations where these are attached;

\item Computing these invariants for the classical and $q$-weighted
cycle-clique graphs, thereby recovering known results for unicyclic,
bicyclic, and cactus graphs, as well as hypertrees.
\end{itemize}

In Section~\ref{Sinverse} we prove Theorem~\ref{Tinverse}, and use it to
show Theorem~\ref{Tmaster}. We add that one can provide other proofs of
Theorem~\ref{Tmaster} or its special cases that are not in the
literature, but we omit these for brevity. Also remark that the
multiplicative special case of $a_e = a'_e = 1$ (proved in
Section~\ref{Sghhmult}) is required in order to prove
Theorem~\ref{Tmaster} via a novel technique that we introduce in this
field: applying Zariski density to distance matrices; to our knowledge,
this is new to the area. Namely:
\begin{itemize}
\item we note that $\det(\dt)_{I|J'}, \cof(\dt)_{I|J'}$, and also our
stated formulas for them in Theorem~\ref{Tmaster} are \textit{polynomial
functions} of the matrix entries in $(\dt)_{I|J'}$;
\item hence we first work with variable edgeweights and
use Zariski density -- since $\det(\dt), 1 - m_e m'_e \not\equiv 0$ from
the multiplicative case, proved independently. This shows the result over
$\Z[\{ a_e, m_e, m'_e : e \in E \}]$; we then specialize to an arbitrary
unital commutative ring $R$.
\end{itemize}
This technique is immensely useful in our proofs -- see also
Remark~\ref{Rzariski} for some of the advantages.

In the final Section~\ref{Sthird}, we introduce a \textit{third}, novel
invariant $\kappa(\dt)$ for the above ``general'' distance matrices $\dt$
for trees~\eqref{Eedgeweight}.
We show that $\kappa$ is also independent of the tree structure, and is
multiplicative across edges; and for general graphs we prove
Graham--Hoffman--Hosoya type identities for $\det(D_G), \cof(D_G),
\kappa(D_G)$ in terms of the strong blocks of $G$ -- see
Theorem~\ref{Tghh3}. This provides an alternate, short proof of the
formulas for $\det(\dt), \cof(\dt)$ in the general setting of
Theorem~\ref{Tmaster} -- from which all known formulas for
$\det(\dt), \cof(\dt)$ for trees can be deduced.

\section{The multiplicative GHH Theorem~\ref{Tghhmult} for
digraphs, and its applications}\label{Sghhmult}

In this section we prove Theorem~\ref{Tghhmult} for product distance
matrices over arbitrary weighted strongly connected graphs, and provide
several applications to graphs that are not trees. The final two sections
will focus on weighted bi-directed trees.

\begin{proof}[Proof of Theorem~\ref{Tghhmult}]
Inducting on the number of cut-vertices, it suffices to show the result
for $G$ having a cut-vertex $v \in V = [n]$ and consisting of strongly
connected subgraphs $G_1$ and $G_2$ separated by $v$, with node sets $\{
1, \dots, v \}$ and $\{ v, \dots, n \}$ respectively. Thus $G$ has
distance matrix
\begin{equation}\label{Eghhmatrix}
D^*_G = \begin{pmatrix}
D_1 & A & AB\\
A' & \Id_{k_v} & B\\
B'A' & B' & D_2
\end{pmatrix},
\end{equation}
where the leading (resp.~trailing) principal $2 \times 2$ block submatrix
equals $D^*_{G_1}$ (resp.~$D^*_{G_2}$).

We first quickly sketch the argument for computing $\det(D^*_G)$, since
it is similar to the one for various special cases shown
in~\cite{BS2,YY2,ZD1}. By elementary linear algebra it is clear that
\[
\det D^*_G = \det \begin{pmatrix}
D_1 & A & 0\\
A' & \Id_{k_v} & 0\\
B'A' & B' & D_2 - B'B
\end{pmatrix} = \det D^*_{G_1} \cdot \det (D_2 - B' \Id_{k_v}^{-1} B) =
\det D^*_{G_1} \cdot \det D^*_{G_2},
\]
where the final equality uses Schur complements. This proves the
assertion.

Next, if $D^*_G$ is invertible (i.e., $\det(D^*_{G_j}) \in R^\times$ for
all $j$) then an explicit computation shows the claimed formula for its
inverse. Once again, it suffices by induction to work with $G$ having one
cut-vertex $v \in [n]$ as above, in which case if $D^*_G$ is of the
form~\eqref{Eghhmatrix}, then
\begin{align*}
&\ D^*_G \cdot \begin{pmatrix} (D^*_{G_1})^{-1} & 0 \\ 0 & 0
\end{pmatrix} + D^*_G \cdot \begin{pmatrix} 0 & 0 \\ 0 & (D^*_{G_2})^{-1}
\end{pmatrix} - D^*_G \cdot \begin{pmatrix} 0 & 0 & 0 \\ 0 & \Id_{k_v} &
0 \\ 0 & 0 & 0 \end{pmatrix}\\
= &\ \begin{pmatrix} \Id_{|V(G_1)|} & 0 \\ B' (0 \quad \Id_{k_v}) & 0
\end{pmatrix} + \begin{pmatrix} 0 & A (\Id_{k_v} \quad 0) \\ 0 &
\Id_{|V(G_2)|} \end{pmatrix} - \begin{pmatrix} 0 & A & 0 \\ 0 & \Id_{k_v}
& 0 \\ 0 & B' & 0 \end{pmatrix} = \Id_{|V(G)|}.
\end{align*}

Finally, if $D^*_G$ is invertible (equivalently, all $D^*_{G_j}$ are
thus), then pre- and post- multiplying~\eqref{Eghhinv} by ${\bf e}^T,
{\bf e}$ respectively yields via Lemma~\ref{Ldetcof}:
\[
\frac{\cof(D^*_G)}{\det(D^*_G)} = \sum_j
\frac{\cof(D^*_{G_j})}{\det(D^*_{G_j})} - \sum_{v \in V^{cut}} (\# \{ j :
v \in G_j \} - 1) \cdot k_v.
\]
Now the claimed identity for $\cof(D^*_G)$ follows from the one for
$\det(D^*_G)$ -- note this holds whenever $\det(D^*_G)$ is invertible. To
prove this holds uniformly, we use a Zariski density argument (or Weyl's
``principle of irrelevance of algebraic inequalities''~\cite{W}). More
precisely, we first work over the field $R_0 := \mathbb{Q}(\{ a_{i,i'}
\})$ of rational functions, where $i,i'$ together index rows (or columns)
of the block matrix $D^*_G$ which belong to the block-entries of unequal
nodes in a common block $D^*_{G_j}$.

Now in the field $R_0$, $\det(D^*_G)$ is a nonzero polynomial (hence
invertible), since specializing to $\eta(i,j) = 0\ \forall i \neq j$
yields $D^*_G = \Id$. In particular, it follows that
\[
p := \cof(D^*_G) - \sum_j \cof(D^*_{G_j}) \prod_{i \neq j}
\det(D^*_{G_i}) + \det(D^*_G) \sum_{v \in V} k_v (\# \{ j : v \in G_j \}
- 1)
\]
is a polynomial in $\Z[ \{ a_{i,i'} \}] \subset R_0$, which vanishes on
the set $U := D(\det(D^*_G))$ of non-roots of $\det(D^*_G)$. Now invoke
Lemma~\ref{Lzariski}, stated and proved below:
$U$ is Zariski dense in the affine space $\mathbb{A}_\mathbb{Q}^N$, where
$N := {\rm tr} \, {\rm deg}_{\mathbb{Q}}(R_0)$ denotes the number of
variables $a_{i,i'}$ in $R_0$. Thus $p$ vanishes on all of
$\mathbb{A}_\mathbb{Q}^N$, and the formula for $\cof(D^*_G)$ holds
uniformly for all values of $(a_{i,i'}) \in \mathbb{Q}^N$. Again using
Lemma~\ref{Lzariski} shows $p = 0$ in $\Z[\{ a_{i,i'} \}]$, so one can
specialize the variables $a_{i,i'}$ to any unital commutative ring $R$,
to prove the formula for $\cof(D^*_G)$ over $R$. Now~\eqref{Eghh1}
follows from \eqref{Eghhinv}.
\end{proof}

Notice that modulo a Zariski density lemma, the final two paragraphs of
our proof of Theorem~\ref{Tghhmult} spelled out the ``Zariski density''
part of the proof in great detail; this was because we will use similar
arguments in what follows. The remaining piece is a basic and well-known
lemma on polynomials:

\begin{lemma}\label{Lzariski}
Suppose $\mathbb{F}$ is an infinite field and $k > 0$ an integer.
\begin{enumerate}
\item If a polynomial $p \in \mathbb{F}[T_1, \dots, T_k]$ vanishes on
$\mathbb{F}^k$, then $p = 0$.

\item For a polynomial $0 \neq f \in \mathbb{F}[T_1, \dots, T_k]$,
let $D(f) := \{ (a_1, \dots, a_k) \in \mathbb{A}_\mathbb{F}^k \cong
\mathbb{F}^k : f(a_1, \dots, a_k)$ $\neq 0 \}$. Then $D(f)$ -- in fact,
every nonempty Zariski open set -- is Zariski dense in $\mathbb{F}^k$.
(Recall, $U \subseteq \mathbb{F}^k$ is Zariski dense if 
$p|_{\mathbb{F}^k} \equiv 0$ whenever $p \in \mathbb{F}[T_1, \dots, T_k]$
vanishes on $U$.)

\item Consider the following inductively indexed family of sets, each of
which is infinite in size:
(a)~$S \subseteq \mathbb{F}$ is an infinite subset;
(b)~for each $j \in [k-1]$, given $s_1 \in S, s_2 \in S_{s_1}, \dots, s_j
\in S_{s_1, \dots, s_{j-1}}$,
let $S_{s_1, \dots, s_j} \subseteq \mathbb{F}$ be an infinite subset.
Then the set of tuples
\[
\mathcal{S}(k) := \{ {\bf s} = (s_1, \dots, s_k) \in \mathbb{F}^k : s_1
\in S, \ s_{j+1} \in S_{s_1, \dots, s_j}\ \forall j \in [k-1] \}
\]
is Zariski dense in $\mathbb{A}_{\mathbb{F}}^k \cong \mathbb{F}^k$.
\end{enumerate}
\end{lemma}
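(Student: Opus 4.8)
The plan is to prove the three parts in order, with (1) the base case, (2) a quick deduction, and (3) a more careful induction; part (3) is the main technical point.

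For part (1), I would induct on the number of variables $k$. The case $k=1$ is the classical fact that a nonzero single-variable polynomial over a field has at most $\deg p$ roots, so if $|\mathbb{F}| = \infty$ and $p$ vanishes on all of $\mathbb{F}$ then $p=0$. For the inductive step, write $p \in \mathbb{F}[T_1,\dots,T_k]$ as $p = \sum_{i=0}^d c_i(T_1,\dots,T_{k-1}) T_k^i$ with $c_i \in \mathbb{F}[T_1,\dots,T_{k-1}]$. Fixing any $(a_1,\dots,a_{k-1}) \in \mathbb{F}^{k-1}$, the single-variable polynomial $p(a_1,\dots,a_{k-1},T_k)$ vanishes on $\mathbb{F}$, hence is zero, so each $c_i(a_1,\dots,a_{k-1}) = 0$. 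Since $(a_1,\dots,a_{k-1})$ was arbitrary, the induction hypothesis gives $c_i = 0$ for all $i$, whence $p=0$.

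For part (2), suppose a Zariski open set $U \neq \emptyset$ is contained in the vanishing locus $Z(g)$ of some $g$; I want to show $g = 0$. Pick $f$ nonzero with $D(f) \subseteq U$ (if $U$ is any nonempty open set it contains such a basic open set). Then $g$ vanishes on $D(f)$, so $fg$ vanishes on all of $\mathbb{F}^k$ (it vanishes on $D(f)$ because $g$ does, and on $Z(f)$ because $f$ does). By part (1), $fg = 0$; since $\mathbb{F}[T_1,\dots,T_k]$ is a domain and $f \neq 0$, we get $g = 0$. In particular no nonempty Zariski open set is contained in a proper closed set, which is exactly Zariski density.

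For part (3) — the main obstacle — I would again induct on $k$, the subtlety being that the ``rectangle'' of allowed tuples is not a product set: the allowed values of $s_{j+1}$ depend on $s_1,\dots,s_j$. Let $p \in \mathbb{F}[T_1,\dots,T_k]$ vanish on $\mathcal{S}(k)$; I claim $p = 0$, which by part (1)-style reasoning (or directly) gives Zariski density. Expand $p = \sum_{i=0}^d c_i(T_1,\dots,T_{k-1}) T_k^i$. Fix any tuple $(s_1,\dots,s_{k-1})$ with $s_1 \in S$, $s_{j+1} \in S_{s_1,\dots,s_j}$; then $p(s_1,\dots,s_{k-1},T_k)$ vanishes on the infinite set $S_{s_1,\dots,s_{k-1}}$, hence is the zero polynomial in $T_k$, so $c_i(s_1,\dots,s_{k-1}) = 0$ for every $i$. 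Thus each $c_i$ vanishes on $\mathcal{S}(k-1)$ (built from the same family of infinite sets, restricted to the first $k-1$ coordinates), and by the induction hypothesis each $c_i = 0$, so $p = 0$. The base case $k=1$ is: a nonzero polynomial in one variable vanishing on an infinite set $S$ must be zero. Finally, to conclude Zariski density from ``every polynomial vanishing on $\mathcal{S}(k)$ is zero,'' note that if $\mathcal{S}(k)$ were contained in a proper closed set $Z(g)$ with $g \neq 0$, then $g$ would vanish on $\mathcal{S}(k)$, a contradiction; hence the Zariski closure of $\mathcal{S}(k)$ is all of $\mathbb{A}^k_{\mathbb{F}}$.
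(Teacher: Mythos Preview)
Your proof is correct and follows essentially the same approach as the paper: induction on $k$ via expanding in the last variable for parts (1) and (3), and using part (1) to establish density in part (2). The only cosmetic differences are that the paper proves (3) before (1), invokes a Vandermonde argument where you say ``infinitely many roots,'' and phrases (2) via irreducibility of $\mathbb{F}^k$ rather than your direct $fg=0$ argument---all equivalent standard moves.
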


For instance in part~(3), one can choose $S_{s_1, \dots, s_j} = S\
\forall j, \ s_1, \dots, s_{j-1}$, in which case $\mathcal{S}(k) := S^k$.
Also note that a strengthening of part~(3) is well-known, e.g.\ see
\cite[Lemma 2.1]{Alon} in Alon's famous work. Moreover, part~(2) is a
(weaker) version of Weyl's principle of irrelevance of algebraic
inequalities~\cite{W}.

\begin{remark}\label{Rghhspecial}
For completeness, we discuss various special cases of
Theorem~\ref{Tghhmult} in the literature. The formula for
$\det(D^*_G)$ 
was recently obtained for matrices with all $k_i \equiv 1$ in~\cite{BS2}
in the spirit of previous results in~\cite{YY2,ZD1}. Also, with
Lemma~\ref{Ldetcof} in hand, it is not hard to observe that the
multiplicative ``$q$-results'' -- analogous to the classical
Graham--Hoffman--Hosoya identities~\eqref{Eghh} -- which were obtained
recently in~\cite{LSZ,S} can be derived from Theorem~\ref{Tghhmult}, by
specializing to $m_e = m_{ij} = m_{ji} = q$ and $a_e = a_{ij} = a_{ji} =
1/(q-1)$. In these works~\cite{LSZ,S}, the authors consider $\det(D^*_G -
J) = (\det-\cof)(D^*_G)$ and $\xi(D^*_G - J) = \det(D^*_G)$ (up to a
power of $q-1$).

Also notice, Theorem~\ref{Tghhmult} immediately implies the independence
of $\cof(D^*_T)$ from the tree structure (for $G$ a tree), since now the
strong blocks are precisely the edges of $G$.
Moreover, the entries $\eta(i,j)$ of the distance matrix $D^*_G$ can be
matrices and we allow $\eta(i,j) \neq \eta(j,i)$, extending the
scalar-entries case in the works~\cite{BS2,LSZ,S} cited above.
\end{remark}

\subsection{Application 1: the $q$- and classical Graham--Hoffman--Hosoya
identities}

Before focussing on trees below, we discuss some applications of
Theorem~\ref{Tghhmult}. 
Our first application shows that Theorem~\ref{Tghhmult} implies analogous 
identities for the \textit{$q$-distance matrix} that (we believe) were
not yet written down. Consider a weighted bi-directed graph $G$ with
$d(u,v) = \alpha_{uv} \in \Z$, say; now set
\begin{equation}\label{Eqmatrix}
(D_q(G))_{u,v} = [\alpha_{uv}] := \frac{q^{\alpha_{uv}} - 1}{q-1} = 1 + q
+ \cdots + q^{\alpha_{uv} - 1},
\end{equation}
where $q$ is a formal parameter. Thus we work over the field
$\mathbb{Q}(q)$. Notice that
\begin{equation}\label{EDq}
D_q(G) = (q-1)^{-1} (D^*_q(G) - J),
\end{equation}
where $D^*_q(G)$ is the ``$q$-multiplicative'' $V \times V$ matrix with
$(u,v)$ entry $q^{d(u,v)}$; now specializing $q \to 1$ (for integer
entries $d(u,v)$) yields precisely the classical distance matrix
$(d(u,v))_{u,v}$. Thus, $D_q(G)$ is an ``intermediate'' matrix used to
pass from $D^*_q(G)$ (or more generally, the multiplicative variant
$D^*_G$) to $D_G = D_1(G)$, and we have Graham--Hoffman--Hosoya type
formulas~\eqref{Eghh} and~\eqref{Eghhdetcof} for the two ``endpoints'' of
this procedure:
\begin{align*}
\det(D^*_G) = &\ \prod_j \det(D^*_{G_j}),\\
\cof(D^*_G) = &\ \prod_j \det(D^*_{G_j}) + \sum_j (\cof(D^*_{G_j}) -
\det(D^*_{G_j})) \prod_{i \neq j} \det(D^*_{G_i}),\\
\det(D_G) = &\ \sum_j \det(D_{G_j}) \prod_{i \neq j} \cof(D_{G_i}),\\
\cof(D_G) = &\ \prod_j \cof(D_{G_j}).
\end{align*}

\noindent The following result presents the corresponding formulas for
the intermediate matrices $D_q(G)$:

\begin{prop}[$q$-GHH identities]\label{Pghh2ghh}
Say $G$ is a directed strongly connected graph with node set $V$, and
$D \in \Z^{V \times V}$ is any matrix such that $d(v,v) = 0\ \forall v
\in V$, and if every directed path from $u \to v$ passes through the cut
vertex $v_0$, then $d(u,v) = d(u,v_0) + d(v_0,v)$.
Now let the $V \times V$ matrix $D_q(G)$ have $(u,v)$ entry $[d(u,v)] =
(q-1)^{-1} (q^{d(u,v)} - 1)$.
If $G$ has strong blocks $G_j$ with corresponding principal
submatrices $D_q(G_j)$, then defining $d^*_j := (q-1) \det(D_q(G_j)) +
\cof(D_q(G_j))$, we have:
\begin{align}\label{Eghhq}
\begin{aligned}
\det(D_q(G)) = &\ \sum_j \det(D_q(G_j)) \prod_{i \neq j} d^*_i,\\
\cof(D_q(G)) = &\ \prod_j d^*_j - (q-1) \sum_j \det(D_q(G_j)) \prod_{i
\neq j} d^*_i.
\end{aligned}
\end{align}
\end{prop}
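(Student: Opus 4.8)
The plan is to derive Proposition~\ref{Pghh2ghh} purely formally from Theorem~\ref{Tghhmult} applied to the $q$-multiplicative matrix $D^*_q(G)$, using the relation~\eqref{EDq}, namely $D_q(G) = (q-1)^{-1}(D^*_q(G) - J)$, together with Lemma~\ref{Ldetcof} which controls how $\det$ and $\cof$ behave under adding a multiple of $J$. Everything takes place over the field $\mathbb{Q}(q)$, so all matrices involved are honest matrices over a field and the manipulations are elementary; in particular we may freely use that $\det$ is multiplicative in scalars and that $\cof$ is, under Lemma~\ref{Ldetcof}, the linear coefficient of $\det(\cdot + xJ)$.

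First I would record the two consequences of Lemma~\ref{Ldetcof} that do the work. For an $N \times N$ matrix $A$ and a scalar $c$, we have $\det(cA) = c^N \det(A)$ and, more importantly, applying Lemma~\ref{Ldetcof} to the identity $cA = c(A') + c(\,\text{nothing}\,)$ is not quite what is needed — instead, the right move is: if $M = D^*_q(G)$ has size $N$ and $D_q(G) = (q-1)^{-1}(M - J)$, then $\det(D_q(G) + xJ) = (q-1)^{-N}\det\big(M + ((q-1)x - 1)J\big)$, and by Lemma~\ref{Ldetcof} the right-hand side is $(q-1)^{-N}\big(\det(M) + ((q-1)x-1)\cof(M)\big)$. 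Reading off the constant term and the coefficient of $x$ gives
\begin{align}\label{EqfromM}
\begin{aligned}
\det(D_q(G)) &= (q-1)^{-N}\big(\det(M) - \cof(M)\big),\\
\cof(D_q(G)) &= (q-1)^{-N+1}\cof(M).
\end{aligned}
\end{align}
The same two formulas hold block-by-block for each strong block $G_j$, with $N$ replaced by $N_j := |V(G_j)|$ and $M$ by $M_j := D^*_q(G_j)$. Note $\sum_j N_j = N + (\#\text{cut-vertices})$ when there is one cut vertex shared by two blocks, $N_1 + N_2 = N + 1$; in general $\sum_j N_j = N + c$ where $c$ counts cut-vertex multiplicities, which is exactly the bookkeeping already appearing in Theorem~\ref{Tghhmult}. (In the present scalar setting all $k_v = 1$.)

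Next I would substitute Theorem~\ref{Tghhmult}'s formulas for $\det(M)$ and $\cof(M)$ — which in the $k_v \equiv 1$ case read $\det(M) = \prod_j \det(M_j)$ and $\cof(M) = \sum_j \cof(M_j)\prod_{i\neq j}\det(M_i) - \det(M)\cdot(c)$ with $c = \sum_{v}( \#\{j: v\in G_j\} - 1)$ — into~\eqref{EqfromM}, and likewise invert~\eqref{EqfromM} block-wise to write $\det(M_j), \cof(M_j)$ in terms of $\det(D_q(G_j)), \cof(D_q(G_j))$: from the block version of~\eqref{EqfromM}, $\cof(M_j) = (q-1)^{N_j - 1}\cof(D_q(G_j))$ and $\det(M_j) = (q-1)^{N_j}\det(D_q(G_j)) + \cof(M_j) = (q-1)^{N_j - 1}\big((q-1)\det(D_q(G_j)) + \cof(D_q(G_j))\big) = (q-1)^{N_j-1} d^*_j$ with $d^*_j$ as defined in the statement. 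Then it is a matter of collecting powers of $(q-1)$: $\prod_j \det(M_j) = (q-1)^{\sum_j(N_j-1)}\prod_j d^*_j = (q-1)^{N-1}\prod_j d^*_j$ (using $\sum_j (N_j - 1) = N - 1$, which is the telescoping count of blocks vs.\ vertices in the tree-of-blocks), and similarly $\sum_j \det(M_j)\prod_{i\neq j}\det(M_i)$ collapses, after pulling out the common $(q-1)$-power, into $(q-1)^{N-1}\sum_j \det(D_q(G_j))\prod_{i\neq j} d^*_i$. Plugging these into $\det(D_q(G)) = (q-1)^{-N}(\det M - \cof M)$ and $\cof(D_q(G)) = (q-1)^{-N+1}\cof M$ and simplifying should produce exactly~\eqref{Eghhq}; the $-(q-1)$ term in the $\cof$ formula comes from the $-\det(M)\cdot c$ correction in Theorem~\ref{Tghhmult} combined with the identity $c = \#\{j\} - 1$ telescoping against $N - \sum(N_j - 1)$, though I would double-check that the stray $c$-dependence cancels cleanly rather than leaving a residual term — since the stated~\eqref{Eghhq} has no $c$ in it, this cancellation is the one genuinely non-cosmetic point.

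The main obstacle I anticipate is precisely this last bookkeeping: making sure the power-of-$(q-1)$ accounting and the cut-vertex correction term $\det(D^*_G)\sum_v k_v(\#\{j:v\in G_j\}-1)$ from Theorem~\ref{Tghhmult} interact so that~\eqref{Eghhq} emerges with no leftover $(q-1)$-factors and no leftover combinatorial constant. Everything else — the reduction via~\eqref{EDq}, the use of Lemma~\ref{Ldetcof}, the block-wise inversion — is routine linear algebra over $\mathbb{Q}(q)$. One clean way to keep the exponents straight is to induct on the number of cut-vertices exactly as in the proof of Theorem~\ref{Tghhmult}: assume $G$ has a single cut-vertex $v$ splitting it into $G_1, G_2$ with $N_1 + N_2 = N+1$, verify~\eqref{Eghhq} directly in that base case from~\eqref{EqfromM} and $\det M = \det M_1 \det M_2$, $\cof M = \cof M_1 \det M_2 + \det M_1 \cof M_2 - \det M$, and then note that the two-block formulas~\eqref{Eghhq} are themselves of a shape that composes associatively, so the general case follows by the same induction. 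I would present the two-block computation in full and then invoke the induction, rather than fighting the multi-index $(q-1)$-powers head-on.
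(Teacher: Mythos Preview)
Your approach is exactly the paper's: the proof there is omitted as ``a straightforward computation using Theorem~\ref{Tghhmult} (and Lemma~\ref{Ldetcof}), via the relation~\eqref{EDq},'' which is precisely what you outline. Your one genuine worry --- that the cut-vertex correction term cancels cleanly --- is resolved by the block--cut-vertex tree identity $c := \sum_v(\#\{j:v\in G_j\}-1) = b-1$ (equivalently $\sum_j(N_j-1)=N-1$, as you note), after which substituting $\cof(D_q(G_j)) = d^*_j - (q-1)\det(D_q(G_j))$ makes the $(1+c-b)\prod_j d^*_j$ term in $\det$ vanish and the $(b-c)\prod_j d^*_j$ term in $\cof$ become exactly $\prod_j d^*_j$.
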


The proof is omitted, as it is merely a computation using
Theorem~\ref{Tghhmult} (and Lemma~\ref{Ldetcof}), via~\eqref{EDq}.

\begin{remark}
The restriction that $d(u,v) \in \Z$ in $D_q(G)$ was merely in order to
work in a familiar setting. However, Proposition~\ref{Pghh2ghh} allows
$d(u,v)$ to take values in an arbitrary abelian group $\Gamma$; it would
then hold in the group algebra $\Z[q^\Gamma] \cong \Z[\Gamma]$. Once
again, these formulas can be shown using the polynomiality of
$\det(D_q(G))$ and $\cof(D_q(G))$ in the matrix entries.
\end{remark}

\begin{remark}
In addition to implying the additive Graham--Hoffman--Hosoya
identities~\eqref{Eghh}, our $q$-variant also implies that $\det(D_q(G)),
\cof(D_q(G))$, and $\cof(D_q(G)) / \det(D_q(G)) = {\bf e}^T D_q(G)^{-1}
{\bf e}$ depend only on the corresponding quantities for the strong
blocks $G_j$ of $G$, and not on the block/tree structure of $G$. This
immediately proves all such observations made in the literature, see
e.g.~\cite{S-hypertrees}, and has perhaps an advantage over the variants
in the literature that use $q$-cofsum$(D_q(G))$ or $\xi(D_q(G))$, since
deducing from the latter the invariance of ${\bf e} D_q(G)^{-1} {\bf e}$
is not as immediate.
\end{remark}

In turn, Proposition~\ref{Pghh2ghh} leads to (a novel proof of) the
original additive GHH-identities:

\begin{prop}\label{Tghh2ghh}
Notation as in Proposition~\ref{Pghh2ghh}, except that now $D$ has
entries $d(u,v)$ in a general unital commutative ring $R$. If $G_j$
denote the strong blocks of $G$, with corresponding principal submatrices
$D_{G_j}$, then the identities~\eqref{Eghh} follow from
Theorem~\ref{Tghhmult} (via Proposition~\ref{Pghh2ghh}):
\[
\cof(D_G) = \prod_j \cof(D_{G_j}), \qquad
\det(D_G) = \sum_j \det(D_{G_j}) \prod_{k \neq j} \cof(D_{G_k}).
\]
\end{prop}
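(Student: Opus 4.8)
The plan is to derive the additive Graham--Hoffman--Hosoya identities~\eqref{Eghh} from the $q$-identities of Proposition~\ref{Pghh2ghh} by specializing $q \to 1$, but carefully, since $D_q(G)$ itself degenerates to the zero matrix at $q=1$. First I would recall from~\eqref{Eqmatrix} that $(D_q(G))_{u,v} = [d(u,v)] = d(u,v) + O(q-1)$ as a polynomial in $q-1$; hence $D_q(G) = (q-1) \widetilde{D}_G$ for a matrix $\widetilde{D}_G$ whose specialization at $q=1$ is exactly $D_G$. Consequently $\det(D_q(G)) = (q-1)^{|V|-1} \det(\widetilde{D}_G)$ and $\cof(D_q(G)) = (q-1)^{|V|-2}\cof(\widetilde{D}_G)$ (using Lemma~\ref{Ldetcof} to track that $\cof$ is a minor of size $|V|-1$). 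Similarly for each strong block $G_j$ with $n_j := |V(G_j)|$ vertices, $\det(D_q(G_j)) = (q-1)^{n_j - 1}\det(\widetilde{D}_{G_j})$ and $\cof(D_q(G_j)) = (q-1)^{n_j-2}\cof(\widetilde{D}_{G_j})$, and the key quantity $d^*_j = (q-1)\det(D_q(G_j)) + \cof(D_q(G_j)) = (q-1)^{n_j-2}\bigl((q-1)^2\det(\widetilde{D}_{G_j}) + \cof(\widetilde{D}_{G_j})\bigr)$, so that $d^*_j = (q-1)^{n_j - 2}\bigl(\cof(\widetilde{D}_{G_j}) + O((q-1)^2)\bigr)$.

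Next I would substitute these into the two formulas of~\eqref{Eghhq} and compare leading powers of $q-1$ on both sides. For $\cof$: the left side is $(q-1)^{|V|-2}\cof(\widetilde{D}_G)$; on the right, $\prod_j d^*_j$ carries $(q-1)^{\sum_j (n_j - 2)}$ and $(q-1)\sum_j \det(D_q(G_j))\prod_{i\neq j}d^*_i$ carries $(q-1)^{1 + (n_j - 1) + \sum_{i \neq j}(n_i - 2)}$. Since the strong blocks of a graph with $c$ cut-vertices (counted with the appropriate multiplicity) satisfy $\sum_j (n_j - 1) = |V| - 1$ for a tree-like block structure — more precisely $\sum_j n_j = |V| + (\text{number of block-cutvertex incidences beyond the first})$ — I would verify the bookkeeping identity $\sum_j (n_j - 2) = |V| - 2 - (r-1)$ where $r$ is the number of strong blocks, and likewise that the second term's exponent is strictly larger, so it drops out in the limit. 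Dividing through by the appropriate power of $q-1$ and setting $q = 1$ then yields $\cof(\widetilde{D}_G) = \prod_j \cof(\widetilde{D}_{G_j})$, i.e. the first identity in~\eqref{Eghh}. The same matching of leading terms applied to the $\det$-formula in~\eqref{Eghhq} gives $\det(\widetilde{D}_G) = \sum_j \det(\widetilde{D}_{G_j})\prod_{i\neq j}\cof(\widetilde{D}_{G_i})$, using $d^*_i|_{q=1} = \cof(\widetilde{D}_{G_i})$ and $\det(D_q(G_j))$ contributing $(q-1)^{n_j-1}\det(\widetilde{D}_{G_j})$.

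Having established the identities for $\widetilde{D}_G$ with $d(u,v) \in \mathbb{Z}$ over the ring $\mathbb{Z}[q]$ localized appropriately (or over $\mathbb{Q}(q)$ and then clearing), the final step is to promote them to arbitrary commutative $R$ and arbitrary $d(u,v) \in R$. For this I would invoke the polynomiality of both sides in the entries $d(u,v)$ — exactly as in the proof of Theorem~\ref{Tghhmult} — together with the Zariski density Lemma~\ref{Lzariski}: the difference of the two sides is a polynomial in the variables $\{d(u,v)\}$ that vanishes on all integer points satisfying the cut-vertex additivity constraints, hence vanishes identically on the corresponding affine subvariety, hence specializes to zero over any $R$. (Alternatively, one observes the additivity constraints cut out an affine-linear subspace, which is irreducible, so a dense subset of integer points suffices.)

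The main obstacle I anticipate is the exponent bookkeeping: getting the powers of $q-1$ exactly right requires the combinatorial identity relating $\sum_j n_j$, $|V|$, the number of strong blocks, and the block-cutvertex incidence structure, and then checking that in \emph{each} of the two formulas in~\eqref{Eghhq} precisely one term on the right-hand side matches the leading power of the left-hand side while the other is of strictly higher order (so that it genuinely vanishes in the limit, rather than contributing). This is the step where a sign or an off-by-one error would break the argument, so I would do the induction on the number of cut-vertices — reducing to the two-block case as in the proof of Theorem~\ref{Tghhmult}, where $|V| = n_1 + n_2 - 1$ and $r = 2$ — and verify the two-block case by a direct power count before concluding in general.
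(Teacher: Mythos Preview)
Your very first step contains a self-contradiction that undermines everything after it. You correctly write $(D_q(G))_{u,v} = [d(u,v)] = d(u,v) + O(q-1)$, which says precisely that $D_q(G)|_{q=1} = D_G$; but in the same breath you assert that ``$D_q(G)$ itself degenerates to the zero matrix at $q=1$'' and factor $D_q(G) = (q-1)\widetilde{D}_G$. These two claims are incompatible: the matrix $D_q(G)$ does \emph{not} vanish at $q=1$, it specializes to the classical distance matrix. (You may be thinking of $D^*_q(G) - J = (q-1)D_q(G)$, which does vanish at $q=1$; but that is not $D_q(G)$.) Even granting your factorization, the exponent count is off: $\det((q-1)M) = (q-1)^{|V|}\det(M)$, not $(q-1)^{|V|-1}$.

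Because $D_q(G)|_{q=1} = D_G$ directly, none of the power-of-$(q-1)$ bookkeeping is needed. The paper's argument is simply: for integer $d(u,v)$, every entry of $D_q(G)$ is a polynomial in $q$, so one may set $q=1$ in~\eqref{Eghhq} termwise. At $q=1$ one has $D_q(G) \to D_G$, $D_q(G_j) \to D_{G_j}$, and $d^*_j = (q-1)\det(D_q(G_j)) + \cof(D_q(G_j)) \to \cof(D_{G_j})$; the second summand in the $\cof$-formula of~\eqref{Eghhq} vanishes because of its explicit factor $(q-1)$. This immediately yields~\eqref{Eghh} for integer data. Your final paragraph on extending to arbitrary $R$ via polynomiality and Zariski density (Lemma~\ref{Lzariski}) is correct and matches the paper.
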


\begin{proof}
The result for integer values of $d(u,v)$ is immediate by setting $q=1$
in Proposition~\ref{Pghh2ghh}, since then $d_j^* = \cof(D_{G_j})$.
Now to go from such matrices with integer entries to arbitrary unital
commutative rings uses a Zariski density argument, given that $\det(D_G),
\cof(D_G)$ are polynomials in the matrix entries in $E := \bigsqcup_k
\overset{\longrightarrow}{E}(G_k)$, where
$\overset{\longrightarrow}{E}(G_k)$ runs over the directed edges of the
strong block $G_k$ of $G$. We work over the ring $R_0 := \mathbb{Q}(E)$,
and denote by $p_d, p_c \in \Z[E]$ the polynomials
\[
p_d := \det(D_G) - \sum_j \det(D_{G_j}) \prod_{i \neq j}
\cof(D_{G_i}), \qquad
p_c := \cof(D_G) - \prod_j \cof(D_{G_j}).
\]
Since $p_d, p_c$ vanish on $\Z_{>0}^E$, which is Zariski dense in
$\mathbb{Q}^E$ by Lemma~\ref{Lzariski}(3), it follows that $p_d = p_c =
0$ in $\Z[E]$. The proof concludes by specializing to an arbitrary unital
commutative ring.
\end{proof}

\begin{remark}
Suppose $d(u,v) \in \Z \ \forall u,v \in V$, and define the matrix
$D^*_q(G)$ to have $(u,v)$ entry $q^{d(u,v)}$, where $q$ is a parameter
as above. Then it is possible to jump directly from $D^*_q(G)$ to
$D_G$:
\begin{align}\label{Eghh2ghh}
\begin{aligned}
\lim_{q \to 1} (q-1)^{-|V|} (\det - \cof)(D^*_q(G)) = &\ \det (D_G),\\
\lim_{q \to 1} (q-1)^{1-|V|} \det(D^*_q(G)) = &\ \cof (D_G),
\end{aligned}
\end{align}
where $q \to 1$ stands for setting $q=1$ after dividing by the relevant
power of $q-1$. Notice that while the first of these formulas can be
anticipated from previous papers (see Remark~\ref{Rghhspecial}), the
latter formula is new, or at least not immediately clear. Also, the
specializations in~\eqref{Eghh2ghh} mean that Theorem~\ref{Tghhmult} is
more general than its classical, additive version in~\cite{GHH}.

To show~\eqref{Eghh2ghh}, we again appeal to $D_q(G)$. The first of the
identities~\eqref{Eghh2ghh} follows immediately from~Lemma~\ref{Ldetcof},
since $D^*_q(G) = (q-1) D_q(G) + J$. This last equality also
implies
\[
(q-1)^{|V|} \det(D_q(G)) + (q-1)^{|V|-1} \cof(D_q(G)) = \det (D^*_q(G)),
\]
via Lemma~\ref{Ldetcof}. Subtracting from this the first of the
formulas~\eqref{Eghh2ghh} (times $(q-1)^{|V|}$) yields:
\[
(q-1)^{|V|-1} \cof(D_q(G)) = \cof(D^*_q(G)),
\]
so that $\cof(D_G) = \lim_{q \to 1} (q-1)^{1-|V|} \cof(D^*_q(G))$. Now
the first identity in~\eqref{Eghh2ghh} implies the claim:
\[
\lim_{q \to 1} (q-1)^{1-|V|} \cof(D^*_q(G)) = \lim_{q \to 1}
(q-1)^{1-|V|} \det(D^*_q(G)).
\]
\end{remark}

\subsection{Application 2: inverse identities}

In several papers in the literature (see the remarks prior to
Theorem~\ref{Tinverse} for trees; but also
e.g.~\cite{cactoid,cycle-clique}), the inverse of distance matrices of
graphs is computed. These are additive matrices of specific graphs $G$,
and we now discuss a recipe to obtain such formulas for general strongly
connected $G$.

Recall that the previous subsection discussed formulas for $\det(\cdot)$
and $\cof(\cdot)$ for strongly connected graphs in terms of their strong
blocks -- for both the $q$- and classical distance matrices. These
formulas were obtained as consequences of Theorem~\ref{Tghhmult} -- which
contains identities for $\det(D^*_G)$ and $\cof(D^*_G)$, but also for
$(D^*_G)^{-1}$. In that spirit, we now record the analogous identity for
$D_q(G)^{-1}$ in terms of the matrices $D_q(G_j)^{-1}$.
(This is done for completeness, and in slightly greater generality; we
leave to the interested reader the explicit such identity, as well as its
specialization to $q=1$.)

\begin{prop}\label{Pmult-inv}
Notation as in Theorem~\ref{Tghhmult}; assume $k_v = 1\ \forall v \in V$.
Let
\[
{\bf u}_j := [(D^*_{G_j})^{-1}]_j [ {\bf e}(|V(G_j)|)]_j \in R^V,
\]
where $[{\bf v}]_j$ denotes the $|V| \times 1$ vector in which the
entries of ${\bf v} \in R^{|V(G_j)|}$ occur in the rows
corresponding to the nodes of $G_j$, with all other entries zero;
and similarly, $[A]_j$ denotes the $|V| \times |V|$ matrix with the
matrix $A \in R^{|V(G_j)| \times |V(G_j)|}$ occurring in the rows and
columns corresponding to the nodes of $G_j$, with all other entries
zero. Also let ${\bf e}_{cut} \in R^V$ denote the $\{ 0, 1\}$-vector
with ones in precisely the coordinates corresponding to $V^{cut}$. Then
for $x$ an indeterminate over $R$,
\begin{align}
\begin{aligned}
(D^*_G + xJ)^{-1} = &\ \sum_j \left( \left[ (D^*_{G_j} + xJ)^{-1}
\right]_j + \frac{x \det(D^*_{G_j})}{\det(D^*_{G_j} + x J)} {\bf u}_j
{\bf u}_j^T \right) - \sum_{v \in V^{cut}} E_{v,v}\\
&\ - \frac{x \det(D^*_G)}{\det(D^*_G + x J)}  \left( -{\bf e}_{cut} +
\sum_j {\bf u}_j \right) \left( -{\bf e}_{cut} + \sum_j {\bf u}_j
\right)^T.
\end{aligned}
\end{align}
\end{prop}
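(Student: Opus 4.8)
The plan is to mimic the structure of the proof of Theorem~\ref{Tghhmult}: reduce by induction to a single cut-vertex, then produce an explicit formula and verify it by direct block-matrix multiplication, finally extending from the invertible case to a polynomial identity via Lemma~\ref{Ldetcof} together with the Zariski-density Lemma~\ref{Lzariski}. Concretely, suppose first that $G$ has exactly one cut-vertex $v$, with strong blocks $G_1, G_2$ on node sets $\{1,\dots,v\}$ and $\{v,\dots,n\}$, so that $D^*_G$ has the block form~\eqref{Eghhmatrix}. Since $k_v = 1$ throughout, ${\bf u}_1, {\bf u}_2$ are the row-sum vectors of $(D^*_{G_1})^{-1}$ and $(D^*_{G_2})^{-1}$ padded by zeros, and $-{\bf e}_{cut} + {\bf u}_1 + {\bf u}_2$ is supported on all of $V$ but is designed so that its $v$-coordinate is $\tauin$-like, i.e.\ $(\,{\bf u}_1 + {\bf u}_2 - {\bf e}_{cut})_v = \cof(D^*_{G_1})/\det(D^*_{G_1}) + \cof(D^*_{G_2})/\det(D^*_{G_2}) - 1$, which by Lemma~\ref{Ldetcof} equals $\cof(D^*_G)/\det(D^*_G)$, i.e.\ $({\bf e}^T (D^*_G)^{-1} {\bf e})$. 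This is the consistency check that makes the rank-one correction term have the right scalar.

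The main computation is then to verify $(D^*_G + xJ)\,[\text{claimed RHS}] = \Id$. The plan is to write $D^*_G + xJ = D^*_G + xJ$ and expand, using that by~\eqref{Eghhinv} (from Theorem~\ref{Tghhmult}) one already knows $(D^*_G)^{-1} = \sum_j [(D^*_{G_j})^{-1}]_j - \sum_{v \in V^{cut}} E_{v,v}$ in the $x = 0$ case (here with $k_v = 1$). For the $x$-dependent part I would apply the Sherman--Morrison formula to each block: $(D^*_{G_j} + xJ_{G_j})^{-1} = (D^*_{G_j})^{-1} - \dfrac{x\,(D^*_{G_j})^{-1} {\bf e}\,{\bf e}^T (D^*_{G_j})^{-1}}{1 + x\,{\bf e}^T (D^*_{G_j})^{-1}{\bf e}}$, and note $1 + x\,{\bf e}^T(D^*_{G_j})^{-1}{\bf e} = \det(D^*_{G_j} + xJ)/\det(D^*_{G_j})$ by Lemma~\ref{Ldetcof}, so that $[(D^*_{G_j}+xJ)^{-1}]_j + \dfrac{x\det(D^*_{G_j})}{\det(D^*_{G_j}+xJ)}{\bf u}_j{\bf u}_j^T = [(D^*_{G_j})^{-1}]_j$. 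Thus the entire proposed inverse collapses, for each fixed $x$, to $\sum_j [(D^*_{G_j})^{-1}]_j - \sum_{v\in V^{cut}} E_{v,v}$ minus the single global rank-one term $\dfrac{x\det(D^*_G)}{\det(D^*_G+xJ)}\,{\bf w}{\bf w}^T$ where ${\bf w} := -{\bf e}_{cut} + \sum_j {\bf u}_j$. So the identity to check reduces to: $(D^*_G + xJ)\bigl((D^*_G)^{-1} - c_x {\bf w}{\bf w}^T\bigr) = \Id$ with $c_x = x\det(D^*_G)/\det(D^*_G+xJ)$ — which is exactly Sherman--Morrison for $D^*_G + xJ = D^*_G + x\,{\bf e}{\bf e}^T$ \emph{provided} ${\bf w} = (D^*_G)^{-1}{\bf e}$. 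Hence the crux is the vector identity ${\bf w} = (D^*_G)^{-1}{\bf e}$, i.e.\ $\sum_j [(D^*_{G_j})^{-1}]_j{\bf e} - {\bf e}_{cut} = (D^*_G)^{-1}{\bf e}$; but applying $(D^*_G)^{-1} = \sum_j [(D^*_{G_j})^{-1}]_j - \sum_v E_{v,v}$ to ${\bf e}$ gives precisely $\sum_j {\bf u}_j - {\bf e}_{cut}$ since $E_{v,v}{\bf e} = {\bf e}_v$ and the cut-vertex coordinates are exactly where blocks overlap. This closes the single-cut-vertex case.

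For the general case, induct on the number of cut-vertices exactly as in the proof of Theorem~\ref{Tghhmult}: peel off one block at a cut-vertex, apply the one-cut-vertex identity with $D^*_{G_2}$ replaced by the distance matrix of the union of the remaining blocks, and use that $\det$ and $\cof$ of the union factor/telescope via~\eqref{Eghhdetcof}; the rank-one vectors ${\bf u}_j$ and ${\bf e}_{cut}$ are additive over this decomposition, so the nested rank-one corrections reorganize into the single sum over all strong blocks and the single global term. The anticipated main obstacle is purely bookkeeping: tracking that the scalar coefficients $x\det(D^*_{G_j})/\det(D^*_{G_j}+xJ)$ of the per-block corrections and $x\det(D^*_G)/\det(D^*_G+xJ)$ of the global correction remain consistent under the inductive splitting — this needs the multiplicativity $\det(D^*_G + xJ)$ versus $\det(D^*_{G_j})$, which does \emph{not} factor as cleanly as $\det(D^*_G)$ does, so one must instead carry the identity ${\bf e}^T(D^*_G)^{-1}{\bf e} = \sum_j {\bf e}^T(D^*_{G_j})^{-1}{\bf e} - \#V^{cut}$ through the induction (this is~\eqref{Eghh1} with $k=1$). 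Finally, to remove the invertibility hypotheses on the $D^*_{G_j}$ and on $\det(D^*_G + xJ)$, clear denominators: multiplying the claimed identity through by $\det(D^*_G + xJ)\prod_j \det(D^*_{G_j}+xJ)$ turns it into an equality of polynomial identities in $x$ and in the matrix entries, which holds on the Zariski-dense open locus where all these determinants are nonzero (nonempty since specializing $\eta(i,j) = 0$ for $i\neq j$ gives all determinants equal to powers of $1+x$), hence holds identically over $\Z[x,\{\cdot\}]$ and specializes to any $R$ by Lemma~\ref{Lzariski}, just as in the proof of Theorem~\ref{Tghhmult}.
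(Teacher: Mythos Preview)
Your approach is correct and coincides with what the paper indicates: the omitted proof is described as ``a careful application of the above results as well as the Sherman--Morrison formula for the inverse of a rank-one update,'' which is exactly your per-block and global Sherman--Morrison collapse together with~\eqref{Eghhinv}. Note that your direct argument already handles the general case in one stroke --- once each summand $[(D^*_{G_j}+xJ)^{-1}]_j + \tfrac{x\det(D^*_{G_j})}{\det(D^*_{G_j}+xJ)}{\bf u}_j{\bf u}_j^T$ collapses to $[(D^*_{G_j})^{-1}]_j$ and you identify ${\bf w} = (D^*_G)^{-1}{\bf e}$ via~\eqref{Eghhinv}, the claimed formula is literally the Sherman--Morrison expression for $(D^*_G + x{\bf e}{\bf e}^T)^{-1}$ --- so the subsequent induction on cut-vertices and the coefficient bookkeeping you worry about are unnecessary. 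One small bookkeeping caution: when you quote~\eqref{Eghhinv} as $\sum_j[(D^*_{G_j})^{-1}]_j - \sum_{v\in V^{cut}} E_{v,v}$ you have dropped the multiplicity $(\#\{j: v\in G_j\}-1)$; carry it consistently with the proposition's ${\bf e}_{cut}$ and $\sum_{v\in V^{cut}} E_{v,v}$ terms (which should be read with the same weights when a cut-vertex lies in more than two blocks), and the argument is unchanged.
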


Here $E_{v,v}$ is the elementary matrix with $(i,j)$ entry $\delta_{i,v}
\delta_{v,j}$. The proof involves carefully applying the above results as
well as the Sherman--Morrison formula for the inverse of a rank-one
update.

The point is that this result provides a closed-form expression for the
inverse of $\tild_G := D^*_G + xJ$ in terms of the inverse and invariants
for $\tild_{G_j} := D^*_{G_j} + xJ$. Indeed, one now writes $D^*_{G_j} =
\tild_{G_j} - xJ$, and using Theorem~\ref{Tghhmult}, one also writes
$\det(D^*_G), \det(D^*_G + xJ)$ in terms of $\det(\tild_{G_j}),
\cof(\tild_{G_j})$. Moreover, by the final assertion in Lemma~\ref{Ldetcof},
${\bf u}_j$ can also be written purely in terms of $\tild_{G_j}$:
\[
{\bf u}_j = \frac{\det(\tild_{G_j})}{\det(\tild_{G_j} - xJ)} [
(\tild_{G_j} - xJ)^{-1} ]_j [{\bf e}(|V(G_j)|)]_j.
\]
Now if $D_q(G)$ denotes the usual $q$-distance matrix for $G$, and $D^*_G
= D^*_q(G)$, then $D_q(G) = (q-1)^{-1} \tild_G|_{x=-1}$. This and the
above result allow one to compute $D_q(G)^{-1}$. Specializing to
$q=1$ yields a formula for $D_G^{-1}$ that subsumes special cases
in~\cite{BKN}--\cite{BS2}, \cite{GL,cactoid,cycle-clique,ZD1,ZD2}.

\begin{remark}
Proposition~\ref{Pmult-inv} can be further generalized to the setting of
Theorem~\ref{Tghhmult}, in which $D^*_G$ is no longer $|V| \times |V|$.
We leave to the interested reader the similar formulation and proof.
\end{remark}

\subsection{Application 3: adding pendant trees}

The next application of our above results generalizes several results in
the literature for graphs outside of trees. We provide detailed
references below; in all of them, the classical or $q$-distance matrix is
what has been studied.

As a first step, the (multiplicative and $q$-) Graham--Hoffman--Hosoya
identities in Theorem~\ref{Tghhmult} and Proposition~\ref{Pghh2ghh}
immediately give that the changes in the determinant and cofactor that
occur due to ``attaching trees to graphs'', depend only on the set of
additional nodes/edges:

\begin{cor}
If $G$ is as in Theorem~\ref{Tghhmult}, and $G'$ is obtained by attaching
finitely many pendant trees $T_j$ to the nodes of $G$, with a total of
$m$ additional bi-directed edges, then
\[
\det(D^*_{G'}), \quad \cof(D^*_{G'}), \quad \det(D_q(G')), \quad
\cof(D_q(G'))
\]
are independent of the structure or location of the $T_j$, and depend
only on $G$ and the $m$ edge-data.
\end{cor}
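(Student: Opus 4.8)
The plan is to reduce the corollary to the det--cof formulas of Theorem~\ref{Tghhmult} (and their $q$-analogue in Proposition~\ref{Pghh2ghh}) by identifying the strong blocks of $G'$. The first step is the elementary observation that attaching pendant trees does not disturb the block structure already present: each attachment node $v_\ell\in V(G)$ becomes a cut-vertex of $G'$, every internal (non-leaf) node of an attached tree is a cut-vertex of $G'$, no strong block of $G$ is merged with another or split, and the strong blocks of $G'$ are exactly the strong blocks $G_1,\dots,G_s$ of $G$ together with the $m$ new bi-directed edges, each of which is its own strong block (no cut-vertex of $G'$ lies in its interior). Each new edge $e$ thus contributes a $2\times 2$ block $D^*_e$ of the form $\left(\begin{smallmatrix}\Id & M_e\\ M'_e & \Id\end{smallmatrix}\right)$ determined solely by the data of $e$.

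Granting this, the second step is substitution. For the determinant, Theorem~\ref{Tghhmult} gives immediately
\[
\det(D^*_{G'}) \;=\; \Big(\prod_{j=1}^s \det D^*_{G_j}\Big)\Big(\prod_{e} \det D^*_e\Big),
\]
where the first factor depends only on $G$ and the second is a product of per-edge quantities $\det D^*_e = \det(\Id - M'_e M_e)$; hence $\det(D^*_{G'})$ depends only on $G$ and the $m$ edge-data, not on where or how the trees are glued on. Feeding the same list of strong blocks into the $\cof$-formula of~\eqref{Eghhdetcof} expresses $\cof(D^*_{G'})$ through the determinants and cofactor-sums of the $G_j$ and of the new blocks $D^*_e$, up to the combinatorial correction term $\det(D^*_{G'})\cdot\sum_{v}k_v(\#\{\text{blocks of }G'\ni v\}-1)$.

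The step I expect to be the main obstacle is controlling this correction term: one must check that its value for $G'$ exceeds its value for $G$ by an amount depending only on the $m$ new edges. Writing the correction as $\sum_{B}K_B - K$ over strong blocks $B$, with $K_B := \sum_{v\in V(B)}k_v$ and $K:=\sum_{v\in V(G')}k_v$, the excess localizes to the forest $F$ formed by the $m$ new edges; since two attached trees can share only an old vertex of $G$, each connected component of $F$ meets $V(G)$ in exactly one node, so $F$ has as many components as there are distinct attachment nodes. A short degree count then gives that the excess equals $\sum_{v\in V(F)}k_v\deg_F(v)-\sum_{v\in V(F)\setminus V(G)}k_v$, which — whenever the relevant $k_v$ are constant (in particular $k_v\equiv 1$, the case of $D_q$) — collapses to $m$ (resp.\ to $k\,m$), independently of the tree structure and of the attachment locations. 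Plugging this back yields the asserted formula for $\cof(D^*_{G'})$ in terms of $G$ and the $m$ edge-data alone. Finally, the statements for the $q$-distance matrix $D_q(G')$ follow the same way from the $q$-identities~\eqref{Eghhq} of Proposition~\ref{Pghh2ghh} (its new blocks $D_q(e)$ again being $2\times 2$), or alternatively by reducing to the multiplicative case via $D^*_q(G')=(q-1)D_q(G')+J$ and Lemma~\ref{Ldetcof}.
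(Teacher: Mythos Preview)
Your approach is essentially the same as the paper's, which offers no proof and simply declares the corollary an immediate consequence of Theorem~\ref{Tghhmult} and Proposition~\ref{Pghh2ghh}. You have in fact been more careful: your analysis of the correction term $\sum_v k_v(\#\{j:v\in G_j\}-1)$ is correct, and your observation that its excess over $G$ collapses to $km$ only when the relevant $k_v$ are constant is a genuine subtlety the paper glosses over---indeed, the paper's lead-in sentence says the invariants depend on ``the set of additional nodes/edges'' (so implicitly on the $k_v$ of the new nodes), while the corollary itself mentions only the $m$ edge-data. For the scalar case $k_v\equiv 1$ (hence for $D_q$) and for constant $k_v$, your argument is complete and matches what the paper intends.
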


Consider the special case $k_v = 1$ for all nodes $v$ of $G$. Then we are
dealing with the $q$-distance matrix, and one can use
Proposition~\ref{Pghh2ghh} and $\det(D_q(T)), \cof(D_q(T))$ for $T$ each
attached edge:

\begin{prop}\label{Paddtrees}
Let $k \geqslant 1$ be an integer, and for each $j \in [k]$ let $G_j$ be
a weighted bi-directed graph on $p_j$ nodes, with $q$-distance matrix
$D_q(G_j)$ that satisfies:
\[
D_q(G_j) {\bf e}(p_j) = d_j {\bf e}(p_j), \qquad \forall j \in [k],
\]
where $d_j \in R$ are scalars.
Now let $G$ be any strongly connected graph with strong blocks $G_1,
\dots, G_k$, and let $G'$ be obtained from $G$ by further attaching
finitely many pendant trees to the nodes of $G$, with a total of $m$ new
vertices and hence $m$ new bi-directed edges $E$ with weights $\{
(\alpha_e, \alpha'_e) : e \in E \}$.

Then $\det(D_q(G')), \cof(D_q(G'))$ depend not on the structure and
location of the attached trees, but only on their edge-data, and as
follows:
\begin{align}\label{Eaddtrees}
\det(D_q(G') + x J) = &\ \prod_{j=1}^k \det(D_q(G_j)) \prod_{j=1}^k (q-1 +
(p_j/d_j)) \prod_{e \in E} (-[\alpha_e + \alpha'_e]) \times\\
& \qquad \qquad \times \left[ x + (1 - (q-1)x) \left(
\sum_{j=1}^k\frac{1}{q-1 + (p_j/d_j)} + \sum_{e \in E} \frac{[\alpha_e]
[\alpha'_e]}{[\alpha_e + \alpha'_e]} \right) \right]. \notag
\end{align}
\end{prop}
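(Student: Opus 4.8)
The plan is to deduce \eqref{Eaddtrees} directly from the $q$-version of the Graham--Hoffman--Hosoya identities in Proposition~\ref{Pghh2ghh}, applied to the graph $G$. First I would pin down the strong-block structure of $G$: attaching a pendant tree to a node $v_0$ of $G'$ makes $v_0$ (and every internal node of the tree) a cut-vertex, but does not merge or modify any strong block of $G'$; and each of the $m$ new bi-directed edges is itself a $2$-node strong block of $G$. Hence Proposition~\ref{Pghh2ghh} applies with the blocks $B$ running over $\{G_1,\dots,G_k\}$ together with the $m$ edges $e\in E$, where for $e=\{u,v\}$ the block matrix $D_q(e)$ has zero diagonal and off-diagonal entries $[\alpha_e]$ ($u\to v$) and $[\alpha'_e]$ ($v\to u$). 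Since the entries of $D_q(G)$ and the $\alpha_e$ need not be integers, I would, exactly as in the proof of Theorem~\ref{Tghhmult}, first regard all relevant quantities as polynomials in the edge-data (with $q$ a further indeterminate), prove the identity there, and then specialize to $R$ via the Zariski-density Lemma~\ref{Lzariski}.

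The second step computes, for each block $B$, the quantity $d^*_B:=(q-1)\det(D_q(B))+\cof(D_q(B))$ that enters Proposition~\ref{Pghh2ghh}. For a single edge $e$ one has $\det(D_q(e))=-[\alpha_e][\alpha'_e]$ and $\cof(D_q(e))=-[\alpha_e]-[\alpha'_e]$, so the $q$-arithmetic identity $(q-1)[\alpha][\beta]+[\alpha]+[\beta]=[\alpha+\beta]$ gives $d^*_e=-[\alpha_e+\alpha'_e]$ and $\det(D_q(e))/d^*_e=[\alpha_e][\alpha'_e]/[\alpha_e+\alpha'_e]$. For a block $G_j$, the hypothesis $D_q(G_j){\bf e}=d_j{\bf e}$ combined with Lemma~\ref{Ldetcof} gives $\cof(D_q(G_j))={\bf e}^T\adj(D_q(G_j)){\bf e}$; right-multiplying $\adj(D_q(G_j))\,D_q(G_j)=\det(D_q(G_j))\,\Id$ by ${\bf e}$ and using $D_q(G_j){\bf e}=d_j{\bf e}$ yields $d_j\,\adj(D_q(G_j)){\bf e}=\det(D_q(G_j)){\bf e}$, hence $d_j\,\cof(D_q(G_j))=p_j\det(D_q(G_j))$, and therefore $d^*_j=\det(D_q(G_j))(q-1+p_j/d_j)$ and $\det(D_q(G_j))/d^*_j=1/(q-1+p_j/d_j)$. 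When $d_j$ is not invertible these are to be read through the denominator-free identity $d_j d^*_j=\det(D_q(G_j))((q-1)d_j+p_j)$, exactly as with the placeholder denominators in Theorem~\ref{Tmaster}.

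The third step is pure assembly. Writing $P:=\prod_B d^*_B$ and $\Sigma:=\sum_B\det(D_q(B))\prod_{B'\neq B}d^*_{B'}$, Proposition~\ref{Pghh2ghh} reads $\det(D_q(G))=\Sigma$ and $\cof(D_q(G))=P-(q-1)\Sigma$, so Lemma~\ref{Ldetcof} gives $\det(D_q(G)+xJ)=\det(D_q(G))+x\,\cof(D_q(G))=xP+(1-(q-1)x)\Sigma$. By the second step, $P=\prod_{j=1}^k\det(D_q(G_j))\cdot\prod_{j=1}^k(q-1+p_j/d_j)\cdot\prod_{e\in E}(-[\alpha_e+\alpha'_e])$ and $\Sigma=P\cdot\bigl(\sum_{j=1}^k 1/(q-1+p_j/d_j)+\sum_{e\in E}[\alpha_e][\alpha'_e]/[\alpha_e+\alpha'_e]\bigr)$; substituting both into $xP+(1-(q-1)x)\Sigma$ produces exactly the right-hand side of \eqref{Eaddtrees}. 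The asserted independence is then immediate, since that expression involves only $\det(D_q(G_j)),d_j,p_j$ and the multiset $\{(\alpha_e,\alpha'_e)\}_{e\in E}$ --- nothing about the internal shape of the attached trees or about which nodes of $G$ they are glued to.

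This argument is essentially mechanical once the inputs are in place; the two places needing a little care are (i) the strong-block claim of step one and (ii) making the ``divide by $d^*_B$, then re-collect'' manipulation of step three legitimate over an arbitrary $R$ (in particular when some $d^*_B$ vanish). I expect (ii) to be the only genuine, if minor, obstacle: it is handled either by keeping everything denominator-free --- expanding $\Sigma$ term by term and only factoring out $P$ at the very end as a polynomial identity --- or by the same Zariski-density passage (Lemma~\ref{Lzariski}) already used to prove Theorem~\ref{Tghhmult} and invoked in step one.
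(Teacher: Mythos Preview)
Your proposal is correct and follows exactly the route the paper intends: the paper omits the proof, stating only that it is ``a straightforward consequence of Proposition~\ref{Pghh2ghh} and Lemma~\ref{Ldetcof},'' and you have supplied precisely those details --- identifying the strong blocks of $G$ as the $G_j$ together with the $m$ new edges, computing $d^*_B$ for each block (using the row-sum hypothesis for the $G_j$ and the identity $(q-1)[\alpha][\beta]+[\alpha]+[\beta]=[\alpha+\beta]$ for the edges), and assembling via $\det(D_q(G)+xJ)=xP+(1-(q-1)x)\Sigma$. Your caveats about Zariski density and denominator-free readings are also in line with the paper's standing conventions.
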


We omit the proof as it is a straightforward consequence of
Proposition~\ref{Pghh2ghh} and Lemma~\ref{Ldetcof}.

\begin{remark}
Proposition~\ref{Paddtrees} simultaneously generalizes a host of results
in the literature. First, it subsumes all undirected $q$-weighted trees
-- by letting $G$ have two nodes and one edge -- hence all undirected
additively weighted trees as well (setting $q=1$).
Next, it also extends \cite[Theorem 7]{LSZ} (which addressed the special
case $\alpha_e = \alpha'_e\ \forall e$ and $x = 0$), as well as
\cite[Theorem 2.3]{BKN}, which dealt with the special case $\alpha_e =
\alpha'_e\ \forall e$ and $q=1$ (so $[\alpha] = \alpha$).
Finally, Proposition~\ref{Paddtrees} specialized to $G_1 = \cdots = G_m$
and adding \textit{no} extra trees, recovers certain results of
Sivasubramanian~\cite{S-hypertrees}.
\end{remark}

\subsection{Application 4: cycle-clique graphs and special cases}

As a final application of Theorem~\ref{Tghhmult} and its applications
above, we study \textit{cycle-clique graphs}. These are unweighted graphs
$G$ whose strong blocks are cycles and complete graphs.
Special cases include unicyclic, bicyclic, and cactus graphs -- where
each of the complete subgraphs is an edge, and the number of cycle-blocks
is one, two, or greater, respectively. For $G$ a unicyclic or bicyclic
graph, $\det(D_q(G))$ has been computed in e.g.~\cite{BKN, bicyclic,
LSZ}; and moreover, $\det(D_G)$ for an arbitrary cycle-clique graph $G$
was computed in~\cite{cycle-clique}. Another special case is the family
of \textit{regular hypertrees} -- see~\cite{S-hypertrees} -- whose
distance matrices precisely coincide with those of graphs whose strong
blocks are isomorphic cliques.

In our final application, we extend all of these formulas to arbitrary
cycle-clique graphs, in two ways. First, we allow for weighted edges; and
second, we explain how our results above help compute $\det(D_q(G) +
xJ)$. In other words, we also compute $\cof(D_q(G))$, and not only for
$q=1$.

Our setting is as follows: $G$ is a weighted strongly connected graph
with strong blocks
\[
C_{r_1}, \ \dots, \ C_{r_k}, \quad K_{p_1}, \ \dots, \ K_{p_m},
\qquad (r_j, p_i \geq 1)
\]
where $C_r, K_p$ denote the cycle and complete graphs with node sets
$[r], [p]$ respectively. Notice that under a suitable labelling of the
nodes of $C_r$, if all edges have weight $\beta$, then $D_q(C_r)$ is a
symmetric Toeplitz circulant matrix, with super/sub diagonal entries
$[\beta] = (q-1)^{-1} (q^\beta - 1)$. Denote this matrix by $D_q(C_r;
\beta)$. Then all rows and columns of $D_q(C_r; \beta)$ have the same
sum.

\begin{lemma}\label{Lcycle}
With the above notation, if $d_{r,\beta}$ is the common row/column sum of
$D_q(C_r; \beta)$, then $\cof(D_q(C_r; \beta)) = \frac{r}{d_{r,\beta}}
\det(D_q(C_r; \beta))$ for $r \geq 1$, where for $r=1$ we define
$\det(D_q(C_1; \beta)) / d_{1,\beta} := 1$.
\end{lemma}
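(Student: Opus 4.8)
The plan is to use the relationship between $\det(\cdot)$, $\cof(\cdot)$, and the all-ones vector recorded in Lemma~\ref{Ldetcof}, exploiting the crucial feature that $D_q(C_r;\beta)$ has constant row and column sums. Concretely, since every row and every column of $D := D_q(C_r;\beta)$ sums to $d_{r,\beta}$, we have $D {\bf e} = d_{r,\beta} {\bf e}$ and ${\bf e}^T D = d_{r,\beta} {\bf e}^T$, so ${\bf e}$ is an eigenvector (on both sides) with eigenvalue $d_{r,\beta}$.

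First I would handle the case $d_{r,\beta}$ invertible (working over the fraction field, or after a Zariski-density reduction in the spirit of the proof of Theorem~\ref{Tghhmult}): then $D$ is invertible if and only if the Schur-type complement is, and in any case $D^{-1}{\bf e} = d_{r,\beta}^{-1}{\bf e}$ when $D$ is invertible. By Lemma~\ref{Ldetcof}, $\cof(D) = {\bf e}^T \adj(D) {\bf e} = \det(D)\,{\bf e}^T D^{-1} {\bf e} = \det(D)\, d_{r,\beta}^{-1}\,{\bf e}^T {\bf e} = \frac{r}{d_{r,\beta}}\det(D)$, since ${\bf e}^T {\bf e} = r$. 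That is exactly the claimed identity for $r \geq 2$ with $d_{r,\beta}$ invertible.

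Next I would remove the invertibility hypothesis on $d_{r,\beta}$ (and on $\det(D)$) by the Zariski-density technique already used in the paper: both sides of $d_{r,\beta}\cof(D_q(C_r;\beta)) = r\det(D_q(C_r;\beta))$ are polynomials in the entries (equivalently, in $q$ and the edge-weight $\beta$, or in a formal variable $[\beta]$), the identity holds on the Zariski-dense open set where $d_{r,\beta}$ is a non-root, hence it holds identically, and one then specializes to an arbitrary commutative ring $R$. Alternatively, and perhaps more cleanly, one can avoid division entirely: from $D{\bf e} = d_{r,\beta}{\bf e}$ one gets $\adj(D) D {\bf e} = \det(D){\bf e}$, i.e. $d_{r,\beta}\,\adj(D){\bf e} = \det(D){\bf e}$; left-multiplying by ${\bf e}^T$ gives $d_{r,\beta}\,\cof(D) = r\det(D)$ directly over any $R$, using $\cof(D) = {\bf e}^T\adj(D){\bf e}$ from Lemma~\ref{Ldetcof}. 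Finally, the degenerate case $r=1$ is just the stated convention $\det(D_q(C_1;\beta))/d_{1,\beta} := 1$, so nothing is to be proved there; one should only check that the $r\geq 2$ argument does not secretly require $r\geq 3$ (it does not, since $C_2$ is simply a doubled edge and $D_q(C_2;\beta)$ still has constant line sums).

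The main obstacle is essentially bookkeeping rather than a genuine difficulty: one must be careful that the identity is stated and proved over an arbitrary commutative ring $R$ where $d_{r,\beta}$ need not be a unit, so the slick eigenvector/inverse argument must be replaced (or justified) by the $\adj$-based version above or by a density argument. A secondary point worth a sentence is to confirm that $D_q(C_r;\beta)$ genuinely has constant row/column sums under the intended labelling — this is immediate from its being a symmetric circulant (so its line sums are all equal to the sum of one row), which is the only structural fact the proof actually uses.
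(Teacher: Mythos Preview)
Your proof is correct and takes a genuinely different route from the paper. The paper's one-line proof applies Proposition~\ref{Paddtrees} with $k=1$, $G_1 = C_r$, and $m=0$ added edges; reading off the constant and linear-in-$x$ terms of the resulting expression for $\det(D_q(C_r;\beta)+xJ)$ yields $\det$ and $\cof$ respectively, and the ratio drops out. Your argument instead goes directly: from $D{\bf e} = d_{r,\beta}{\bf e}$ and $\adj(D)D = \det(D)\Id$ you obtain $d_{r,\beta}\,\adj(D){\bf e} = \det(D){\bf e}$, and pairing with ${\bf e}^T$ together with $\cof(D) = {\bf e}^T\adj(D){\bf e}$ from Lemma~\ref{Ldetcof} gives $d_{r,\beta}\cof(D) = r\det(D)$ over any commutative ring, with no need for Zariski density. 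This is more elementary and self-contained, and in fact yields the slightly stronger division-free identity; the paper's approach, by contrast, situates the lemma as an immediate specialization of its general machinery (Proposition~\ref{Paddtrees}, which in turn rests on the $q$-Graham--Hoffman--Hosoya identities of Proposition~\ref{Pghh2ghh}), which is convenient given the surrounding narrative but logically heavier.
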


Since $D_q(C_r; \beta)$ is a circulant matrix for each $r \geq 1$, its
determinant has a well-known closed-form expression using roots of unity
(say working over some extension of our ground ring $R$).

\begin{proof}
Apply Proposition~\ref{Paddtrees} to $G = C_r$ (with edgeweights
$[\beta]$), with $m=0$ new edges added.
\end{proof}

The other kind of blocks are the cliques (complete subgraphs) in $G$. We
first study the $q$-distance matrix of each such graph $K_p$, again under
a suitable labelling of the nodes. The following model generalizes the
$p=2$ case of bi-directed edges with weights $a_e, a'_e$:

\begin{definition}
Given an integer $p \geq 1$ and elements $a,a'$ in a unital commutative
ring, let $D(K_p; a, a')$ denote the matrix with $(i,j)$ entry $a, \ 0, \
a'$ for $i<j, \ i=j, \ i>j$ respectively. Then $D(K_p; a, a')$ is a
Toeplitz matrix, which we will denote using its entries in the first
column (bottom to top) and then the first row (left to right):
\[
D(K_p; a, a') = \Toep(a', \dots, a';\ 0;\ a, \dots, a).
\]
\end{definition}

\begin{lemma}\label{Lclique}
With the above notation,
\begin{align}
\begin{aligned}
\det D(K_p; a, a') = &\ (-1)^{p-1} a a' \, \frac{a^{p-1} -
(a')^{p-1}}{a-a'},\\
\cof D(K_p; a, a') = &\ (-1)^{p-1} \, \frac{a^p - (a')^p}{a-a'},\\
\adj D(K_p; a, a') = &\ (-1)^{p-1} \Toep( (a')^{p-1}, (a')^{p-2} a,
\dots, a' a^{p-2};\ \alpha;\ (a')^{p-2} a, \dots, a^{p-1}),
\end{aligned}
\end{align}
where $\displaystyle \alpha = -a a' \, \frac{a^{p-2} -
(a')^{p-2}}{a-a'}$ and $p \geq 2$ in the last equation; while for $p=1$
we have
\[
\adj(D(K_1; a, a')) = \cof(D(K_1; a, a')) = 1.
\]
\end{lemma}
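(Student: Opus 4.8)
The plan is to compute $\det$, $\cof$, and $\adj$ of the Toeplitz matrix $D(K_p; a, a') = \Toep(a', \dots, a';\ 0;\ a, \dots, a)$ by a direct recursion on $p$, exploiting the bordered structure: writing $D(K_{p}; a, a')$ in terms of $D(K_{p-1}; a, a')$ by adding a new node (say labelled $p$, appended as the last row and column). First I would set up notation $d_p := \det D(K_p; a, a')$ and $c_p := \cof D(K_p; a, a')$, with base cases $d_1 = 0$ (the $1\times 1$ zero matrix), $c_1 = 1$, $d_2 = -aa'$, $c_2 = a + a'$, and verify these match the claimed closed forms $d_p = (-1)^{p-1} aa' \frac{a^{p-1} - (a')^{p-1}}{a - a'}$ and $c_p = (-1)^{p-1}\frac{a^p - (a')^p}{a - a'}$ (interpreting $\frac{a^k - (a')^k}{a - a'} = \sum_{i=0}^{k-1} a^i (a')^{k-1-i}$ so there is no division issue when $a = a'$).

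The cleanest route for $\det$ is row/column reduction: subtract the $(p-1)$th row from the $p$th row and the $(p-1)$th column from the $p$th column (or use the two-term recurrence that a Toeplitz matrix of this banded-by-constants type satisfies). Concretely, $D(K_p; a, a')$ has all entries above the diagonal equal to $a$ and all below equal to $a'$; subtracting consecutive rows produces a nearly bidiagonal matrix whose determinant telescopes. I expect to obtain a linear recurrence of the shape $d_p = -a a' \, d_{p-1}'$-type, or more precisely after elimination one finds $d_p = (a + a') \cdot(\text{something}) - a a' \cdot(\text{something})$; the honest statement is that one derives $\det D(K_p;a,a') + (a-a')\,(\text{correction}) $ satisfies a geometric recursion, and solving it gives the stated formula. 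Rather than belabor this, I would verify the recursion $d_p - (a')\, c_{p-1}\big|_{\ldots}$... — in practice the fastest rigorous argument is: apply Lemma~\ref{Ldetcof} or, better, apply the det-cof machinery we already have, namely $\det(D(K_p;a,a') + xJ)$ is linear in $x$ with constant term $d_p$ and linear term $c_p$; one computes $\det(D(K_p;a,a') + xJ)$ directly because $D(K_p; a,a') + xJ = \Toep(a'+x,\dots;\,x;\,a+x,\dots)$ is again Toeplitz with an even simpler structure after subtracting consecutive rows and columns, reducing it to a bidiagonal plus rank-one form whose determinant is an explicit product.

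For the adjugate, the plan is to use $\adj(A) \cdot A = \det(A)\,\Id$ together with the Toeplitz/persymmetry of $D(K_p; a, a')$: the matrix is persymmetric (symmetric about the anti-diagonal), and the ansatz $\adj D(K_p;a,a') = (-1)^{p-1}\Toep((a')^{p-1}, (a')^{p-2}a, \dots; \alpha; (a')^{p-2}a,\dots, a^{p-1})$ can be verified by directly multiplying the proposed Toeplitz matrix against $D(K_p; a,a')$ and checking the product is $d_p \Id$ — this reduces to a finite set of polynomial identities in $a, a'$ (the diagonal entry forces the value of $\alpha$, and the off-diagonal entries telescope to zero). Alternatively, since $\cof(A) = {\bf e}^T \adj(A)\, {\bf e}$ by Lemma~\ref{Ldetcof}, once $\adj$ is known the formula for $\cof$ drops out as a consistency check, and conversely the $i=j$ cofactors (principal minors) together with persymmetry pin down enough entries. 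The main obstacle I anticipate is bookkeeping: keeping the Toeplitz band structure of the reduced matrices straight through the elimination steps, handling the $p = 1, 2$ degenerate cases separately (where the claimed $\adj$ formula must be read off carefully, e.g. $\alpha$ is vacuous for $p \le 2$), and making sure all formulas are stated so that the apparent denominators $a - a'$ are harmless. None of these is deep, but the indexing is error-prone, so I would organize the proof around the single identity $(\text{proposed }\adj)\cdot D(K_p; a,a') = d_p\,\Id$ and derive everything else from it plus Lemma~\ref{Ldetcof}.
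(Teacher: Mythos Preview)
Your proposal is correct and follows essentially the same route as the paper: compute $\det$ by induction on $p$, verify the claimed $\adj$ formula by directly checking that $(\text{proposed }\adj)\cdot D(K_p;a,a') = d_p\,\Id$, and then read off $\cof$ via Lemma~\ref{Ldetcof}. The paper additionally frames the $\adj$ verification through a Zariski density step (working over $\mathbb{Q}(a,a')$ where $D$ is generically invertible, then specializing), whereas your direct polynomial-identity check sidesteps that; otherwise the arguments are the same.
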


\begin{proof}
First compute $\det D(K_p; a, a')$ by induction on $p$. Next, work over
the field $R_0 = \mathbb{Q}(a,a')$, and assume by a Zariski density
argument using Lemma~\ref{Lzariski} that $D(K_p; a, a')$ is invertible;
indeed, this is so whenever $a=a' \neq 0$, since $\det D(K_p; a, a) =
\det a (J-\Id) = (-1)^{p-1} a^p (p-1)$. This yields the expression for
$\adj(\cdot)$ via an easy verification. This shows the result over $R_0$;
now Zariski density arguments as above imply the result over any unital
commutative ring $R$.
Finally, the assertion for $\cof(\cdot)$ is shown via another computation
that uses Lemma~\ref{Ldetcof} and the formula for $\adj(\cdot)$.
\end{proof}

With these results in hand, one can show:

\begin{theorem}\label{Ppolycyclic}
Let $G$ be a connected cycle-clique graph whose strong blocks are
cycles $C_{r_1}, \dots, C_{r_k}$ and complete subgraphs $K_{p_1}, \dots,
K_{p_m}$, with all $r_j, p_i \geq 1$. Also assume there exist scalars
\[
\beta_1, \ \dots, \ \beta_k ; \quad \alpha_1, \alpha'_1, \ \dots, \
\alpha_m, \alpha'_m
\]
and a labelling of the nodes of $G$ such that the $q$-distance matrices
of the blocks are of the form
\[
D_q(C_{r_j}; \beta_j), \ j \in [k] \qquad \text{and}
\qquad D(K_{p_i}; [\alpha_i], [\alpha'_i]), \ i \in [m].
\]
\begin{enumerate}
\item Then $\det(D_q(G)), \cof(D_q(G))$ have closed-form expressions
which can be derived using Lemmas~\ref{Lcycle} and~\ref{Lclique} and
Proposition~\ref{Pghh2ghh}.

\item A sample special case: say $\beta_j = 1\ \forall j$, and $d_{r,1}$
is as in Lemma~\ref{Lcycle}. Also say $p_i = 2\ \forall i$ and denote
these $2$-cliques/edges $i \in [m]$ by $e \in E$. Then the formulas in
part~(1) specialize to:
\begin{align}
\begin{aligned}
\det (D_q(G) + xJ) = \ & \prod_{j=1}^k \left( (q-1) \det D_q(C_{r_j}) +
r_j \frac{\det D_q(C_{r_j})}{d_{r_j,1}} \right) \prod_{e \in E} (-
[\alpha_e + \alpha'_e]) \times\\
&\ \times \left[ x + (1 - (q-1)x) \left( \sum_{j=1}^{k}
\frac{d_{r_j,1}}{(q-1) d_{r_j,1} + r_j} + \sum_{e \in E} \frac{[\alpha_e]
[\alpha'_e]}{[\alpha_e + \alpha'_e]} \right) \right].
\end{aligned}
\end{align}

\item Another special case: suppose the strong blocks of $G$ are cliques
of sizes $p_1, \dots, p_m$. Then,
\begin{equation}\label{Ehypertrees}
\det( D_q(G) + xJ ) = (-1)^{\sum_i (p_i - 1)} \prod_{i=1}^m (p_i q - q +
1) \left[ x + (1-(q-1)x) \sum_{i=1}^p \frac{p_i-1}{p_i q - q + 1}
\right],
\end{equation}
where the denominators are again placeholders, cancelled by factors in
the numerators.
\end{enumerate}
\end{theorem}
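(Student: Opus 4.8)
The plan is to deduce all three parts of Theorem~\ref{Ppolycyclic} from the machinery already assembled, with part (1) being essentially immediate and parts (2), (3) being specializations whose only content is a computation. For part (1), I would invoke Proposition~\ref{Pghh2ghh}: since $G$ is strongly connected with the stated strong blocks, the formula~\eqref{Eghhq} expresses $\det(D_q(G))$ and $\cof(D_q(G))$ in terms of $\det(D_q(G_\ell)), \cof(D_q(G_\ell))$ over the blocks $G_\ell$. For a cycle block $C_{r_j}$ with uniform edgeweight $\beta_j$, Lemma~\ref{Lcycle} gives $\cof(D_q(C_{r_j};\beta_j)) = \frac{r_j}{d_{r_j,\beta_j}}\det(D_q(C_{r_j};\beta_j))$, and the determinant itself has a roots-of-unity closed form as a circulant; for a clique block $K_{p_i}$ realized as $D(K_{p_i};[\alpha_i],[\alpha'_i])$, Lemma~\ref{Lclique} supplies both $\det$ and $\cof$ in closed form. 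Substituting these into~\eqref{Eghhq} yields the closed-form expressions asserted in part~(1).

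For part (2), I would specialize part (1) — or more efficiently, Proposition~\ref{Paddtrees}, which already packages the $+xJ$ version — to the case $\beta_1 = \cdots = \beta_k = 1$ and $p_i = 2$ for all $i$. The $2$-cliques are then precisely bi-directed edges $e \in E$ with weights $(\alpha_e,\alpha'_e)$, and $D(K_2;[\alpha_e],[\alpha'_e])$ has $\det = -[\alpha_e][\alpha'_e]$... wait, rather $\det = [\alpha_e][\alpha'_e]$ with sign — here I would just quote Lemma~\ref{Lclique} at $p=2$, giving $\det D(K_2;a,a') = -aa'$ and $\cof D(K_2;a,a') = a+a'$, so that $d^*_e := (q-1)\det D_q(K_2) + \cof D_q(K_2) = -(q-1)[\alpha_e][\alpha'_e] + [\alpha_e]+[\alpha'_e] = [\alpha_e+\alpha'_e]$ using the identity $[\alpha][\alpha'](q-1) = [\alpha+\alpha'] - [\alpha] - [\alpha']$. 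For the cycle blocks, $d^*_{C_{r_j}} = (q-1)\det D_q(C_{r_j}) + r_j \det D_q(C_{r_j})/d_{r_j,1}$. Feeding these $d^*$-values and the corresponding $\det D_q(G_\ell)$ into the form of~\eqref{Eghhq} (or directly~\eqref{Eaddtrees} with $m=0$ and the $G_j$ taken to be the cycles, the edges handled as the attached-edge term) produces the displayed formula after collecting the $x$-coefficient via Lemma~\ref{Ldetcof}.

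For part (3), where every strong block is a clique $K_{p_i}$, I would again specialize: here there are no cycle blocks, and each block contributes $d^*_{i} = (q-1)\det D(K_{p_i};[\mathbf 1],[\mathbf 1])$... more precisely, with uniform weight $1$ on each edge one has $D(K_{p_i};1,1) = J - \Id$ on $p_i$ nodes in the $q \to 1$ case, but for general $q$ the relevant block is $D(K_{p_i};[1],[1]) = D(K_{p_i};1,1)$ since $[1] = 1$; then Lemma~\ref{Lclique} at $a = a' = 1$ degenerates, so I would instead take the limit $a' \to a = 1$ in the Lemma's formulas, getting $\det D(K_{p_i};1,1) = (-1)^{p_i-1}(p_i-1)$ and $\cof D(K_{p_i};1,1) = (-1)^{p_i-1}p_i$, hence $d^*_i = (-1)^{p_i-1}((q-1)(p_i-1) + p_i) = (-1)^{p_i-1}(p_i q - q + 1)$. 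Plugging into~\eqref{Eghhq} and Lemma~\ref{Ldetcof} and factoring out $\prod_i d^*_i = (-1)^{\sum_i(p_i-1)}\prod_i(p_iq-q+1)$ gives~\eqref{Ehypertrees} once one checks the $x$-coefficient simplifies as claimed. The main obstacle, such as it is, is purely bookkeeping: correctly tracking the signs and the $(q-1)$-powers when passing between $D^*_q$, $D_q$, and the $+xJ$ perturbation via Lemma~\ref{Ldetcof}, and handling the degenerate $a = a'$ and $r = 1$ (resp.\ $p = 1$) boundary cases where the closed forms in Lemmas~\ref{Lcycle} and~\ref{Lclique} are stated as limits or placeholders. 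No genuinely new idea is needed beyond what~\eqref{Eghhq}, Lemma~\ref{Lcycle}, Lemma~\ref{Lclique}, and Lemma~\ref{Ldetcof} already provide.
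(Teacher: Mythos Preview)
Your approach is correct and matches the paper's, which simply says the proof ``involves straightforward computations using the results shown above'' (namely Lemmas~\ref{Lcycle}, \ref{Lclique} and Proposition~\ref{Pghh2ghh}). One computational slip to fix: for $p=2$, Lemma~\ref{Lclique} gives $\cof D(K_2;a,a') = -(a+a')$, not $a+a'$, so $d^*_e = -(q-1)[\alpha_e][\alpha'_e] - [\alpha_e] - [\alpha'_e] = -[\alpha_e+\alpha'_e]$, which is exactly the factor $-[\alpha_e+\alpha'_e]$ appearing in the displayed formula of part~(2).
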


\noindent We omit the proof as it involves straightforward computations
using the results shown above.

As mentioned above, Theorem~\ref{Ppolycyclic} subsumes the corresponding
results in~\cite{BKN, bicyclic, cycle-clique, LSZ, S-hypertrees}. For
instance,~\eqref{Ehypertrees} is a twofold generalization of a result
in~\cite{S-hypertrees}, where it was shown in the special case $p_i = 3\
\forall i, \ x=0$.
As another special case, the determinant of the classical additive
distance matrix over such a cycle-clique graph is now stated:

\begin{cor}\label{Cspecial}
In the setting of Theorem~\ref{Ppolycyclic}, specialize to
$q=1$, $\alpha_i = \alpha'_i = 1\ \forall i \in [m]$,
$\beta_j = 1\ \forall j \in [r]$.
Then $\det (D(G) + xJ)$ vanishes if any cycle (i.e.~$r_j$) is even, else
\begin{equation}\label{Ecycle-clique}
\det (D(G) + xJ) = \prod_{j=1}^k r_j \prod_{i=1}^m ((-1)^{p_i - 1} p_i)
\left( x + \sum_{i=1}^m \frac{p_i-1}{p_i} + \sum_{j=1}^k \frac{1}{r_j}
\lfloor r_j / 2 \rfloor \; \lceil r_j / 2 \rceil \right).
\end{equation}
\end{cor}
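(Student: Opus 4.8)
The plan is to derive Corollary~\ref{Cspecial} as a direct specialization of part~(1) of Theorem~\ref{Ppolycyclic} — or equivalently of the master formula~\eqref{Eghhq} of Proposition~\ref{Pghh2ghh} — by substituting $q = 1$, $\alpha_i = \alpha'_i = 1$, and $\beta_j = 1$, and then simplifying each block contribution. First I would treat the clique blocks: when $q = 1$ we have $[\alpha_i] = [\alpha'_i] = 1$, so the relevant matrix is $D(K_{p_i}; 1, 1) = J - \Id$, and Lemma~\ref{Lclique} (taking the limit $a, a' \to 1$, or reading off the stated $a = a'$ case) gives $\det D(K_{p_i};1,1) = (-1)^{p_i-1} p_i \cdot \frac{p_i-1}{p_i} \cdot$(correction: $(-1)^{p_i-1}(p_i-1)$) — more precisely $\det(J-\Id) = (-1)^{p_i-1}(p_i-1)$ and $\cof(J-\Id) = (-1)^{p_i-1} p_i$, the latter from the $\cof$ formula in Lemma~\ref{Lclique}. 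Hence $d^*_i = (q-1)\det + \cof$ specializes at $q=1$ to $\cof D(K_{p_i};1,1) = (-1)^{p_i-1} p_i$, and the ratio $\det/\cof$ for a clique block equals $(p_i-1)/p_i$.

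Next I would handle the cycle blocks. With $\beta_j = 1$ and $q = 1$, $D_1(C_{r_j};1)$ is the ordinary path-distance matrix of a cycle $C_{r_j}$, a symmetric circulant whose common row sum is $d_{r_j,1} = \sum_{t} \min(t, r_j - t) = \lfloor r_j/2\rfloor \lceil r_j/2 \rceil$ (a standard count; for $r_j$ even this is $(r_j/2)^2$, for $r_j$ odd it is $\frac{r_j-1}{2}\cdot\frac{r_j+1}{2}$). For the determinant of a circulant one uses roots of unity: $\det D_1(C_{r};1) = \prod_{\zeta^r = 1} \big(\sum_{t=1}^{r-1}\min(t,r-t)\zeta^t\big)$. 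The key arithmetic fact is that this product vanishes precisely when $r$ is even — because then $\zeta = -1$ is an $r$-th root of unity and $\sum_t \min(t, r-t)(-1)^t = 0$ for even $r$ (the terms pair up with opposite signs) — whereas for $r$ odd the product is the known nonzero value, which when combined via Lemma~\ref{Lcycle} yields the cycle contribution. Lemma~\ref{Lcycle} tells us $\cof(D_1(C_r;1)) = \frac{r}{d_{r,1}}\det(D_1(C_r;1))$, so the cycle block's contribution to $\cof$ (which is what enters $d^*_j$ at $q=1$) is $\frac{r_j}{d_{r_j,1}}\det(D_1(C_{r_j};1))$, and its $\det/\cof$ ratio is $d_{r_j,1}/r_j = \frac{1}{r_j}\lfloor r_j/2\rfloor\lceil r_j/2\rceil$.

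Then I would assemble these via the $q=1$ case of~\eqref{Eghhq}, equivalently the Graham--Hoffman--Hosoya identity~\eqref{Eghh} in the form of Proposition~\ref{Tghh2ghh}: $\cof(D(G)) = \prod_{\text{blocks}} \cof(D_{\text{block}})$ and $\det(D(G) + xJ) = x\cof(D(G)) + \det(D(G)) = \cof(D(G))\big(x + \sum_{\text{blocks}} \det/\cof\big)$ by Lemma~\ref{Ldetcof}, valid whenever $\cof(D(G)) \neq 0$. The product of block cofactors is $\prod_j \frac{r_j}{d_{r_j,1}}\det(D_1(C_{r_j};1)) \cdot \prod_i (-1)^{p_i-1}p_i$; one checks this equals $\prod_j r_j \cdot \prod_i ((-1)^{p_i-1}p_i)$ after absorbing the circulant determinant against $d_{r_j,1}$ — this last cancellation being exactly where the closed form $\prod_j r_j$ for the cycle prefactor comes from (it is the statement that, for $r$ odd, $\det(D_1(C_r;1))/d_{r,1} = r / r = 1$ — wait, more carefully: $\cof(D_1(C_r;1))$ itself equals $r$ for $r$ odd, a clean fact one verifies directly or extracts from the circulant eigenvalue product). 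Summing the per-block ratios gives $\sum_i \frac{p_i-1}{p_i} + \sum_j \frac{1}{r_j}\lfloor r_j/2\rfloor\lceil r_j/2\rceil$, matching~\eqref{Ecycle-clique}. The vanishing claim for even cycles is immediate: if some $r_j$ is even then $\det(D_1(C_{r_j};1)) = 0$, hence that block's cofactor is $\frac{r_j}{d_{r_j,1}}\cdot 0 = 0$ — actually one must be slightly careful since $\cof$ of that block need not vanish; rather, one argues that $\det(D(G))$ contains $\det(D_{C_{r_j}})$ as a factor and $\cof(D(G))$ also vanishes because the whole circulant block is singular — so both $\det(D(G)+xJ)$ coefficients vanish.

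The main obstacle I anticipate is the bookkeeping around the even-cycle degenerate case and the clean identification of the cycle prefactor. Lemma~\ref{Lcycle}'s statement $\cof(D_q(C_r;\beta)) = \frac{r}{d_{r,\beta}}\det(D_q(C_r;\beta))$ carries a placeholder convention when $d_{r,\beta} = 0$ or when $r=1$; for even $r$ at $q=1$ one has $d_{r,1} = (r/2)^2 \neq 0$ but $\det = 0$, so $\cof = 0$ too, which forces the total cofactor product to be zero and kills the polynomial in $x$ entirely — I need to phrase this so that the "$\det(D(G)+xJ)$ vanishes" conclusion is rigorous rather than a $0/0$ artifact, ideally by noting that $D(G)$ has a singular principal block and invoking the block structure directly rather than the ratio form. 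Conversely for odd $r$ I must pin down $\cof(D_1(C_r;1)) = r$ exactly (not merely up to the placeholder), which is a short circulant eigenvalue computation: the eigenvalue at the trivial character is $d_{r,1}$, and $\cof = \det \cdot \sum (\text{eigenvalue})^{-1}$ collapses, using symmetry of the non-trivial eigenvalues, to exactly $r$. Once these two facts are nailed down, the rest is the routine substitution and product-simplification sketched above.
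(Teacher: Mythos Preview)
Your approach is essentially the same as the paper's: specialize Theorem~\ref{Ppolycyclic}(1) via Lemmas~\ref{Lcycle} and~\ref{Lclique} and the $q=1$ Graham--Hoffman--Hosoya identities (Proposition~\ref{Tghh2ghh}), then read off the block contributions. The paper streamlines the cycle part by simply citing the known fact (e.g.\ \cite[Theorem~3.4]{BKN}) that $\det D(C_r)$ vanishes for $r$ even and equals $d_{r,1} = \lfloor r/2\rfloor\lceil r/2\rceil$ for $r$ odd; from this Lemma~\ref{Lcycle} immediately gives $\cof D(C_r) = r$ for $r$ odd and $\cof D(C_r) = 0$ for $r$ even, which settles both the prefactor and the vanishing claim without any hedging.

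One concrete error in your sketch: the claim that $\sum_{t=1}^{r-1}\min(t,r-t)(-1)^t = 0$ for all even $r$ is false (for $r=6$ the sum is $-1$), so $\zeta=-1$ is not in general the root witnessing singularity. The correct statement is just that \emph{some} nontrivial root of unity kills the circulant, and this is exactly what the cited reference provides; once you invoke that, your worry about the even-cycle case disappears, since Lemma~\ref{Lcycle} then gives $\cof(D(C_{r_j}))=0$ directly and the GHH product formula forces $\det(D(G)+xJ)\equiv 0$.
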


In the further special case $x=0$, this formula subsumes various results
in the literature -- see prior to Corollary~\ref{Cspecial}. Note that it
is another sample special case of Theorem~\ref{Ppolycyclic}(1), and again
follows from Lemmas~\ref{Lcycle} and~\ref{Lclique} and
Proposition~\ref{Pghh2ghh}. The proof of~\eqref{Ecycle-clique} uses the
fact that $\det D(C_r)$ is zero if $r$ is even, and equals $d_{r,1} =
\lfloor r / 2 \rfloor \; \lceil r / 2 \rceil$ otherwise (see
e.g.~\cite[Theorem 3.4]{BKN}).

\section{Inverse, determinant, cofactor-sum of the general distance
matrix: Theorems~\ref{Tmaster},~\ref{Tinverse}}\label{Sinverse}

We now return to trees. A natural question -- pursued for all variants of
$\dt$ studied to date -- is to compute $\dt^{-1}$. This was first carried
out by Graham and Lov\'asz in~\cite{GL}; in this case and subsequent
variants, $\dt^{-1}$ is usually a rank-one update of a related
``Laplacian'' matrix or a $q$-version thereof.
Later in~\cite{BLP1}, Bapat--Lal--Pati worked with $m_e = m'_e = q$ and
$a_e = a'_e\ \forall e \in E$, and wrote:
``\textit{\dots it appears that such a formula [for $\dt^{-1}$] will be
very complicated and we leave it as an open problem.}''

We now prove Theorem~\ref{Tinverse}, which provides a closed-form
expression for $\dt^{-1}$ in our general framework, hence resolves the
open question in~\cite{BLP1} in greater generality and also extends the
Graham--Lov\'asz result. As a by-product of our proof, we will obtain
closed-form expressions for $\det(\dt), \cof(\dt)$, and will then prove
the rest of Theorem~\ref{Tmaster}.

We begin by explaining the matrix $C_\mathcal{T}$ in
Theorem~\ref{Tinverse}, via some notation:

\begin{definition}
Setting as in Theorem~\ref{Tmaster}. In other words, $T = (V,E)$ is a
tree on $n$ nodes with edge-data $\mathcal{T} = \{ (a_e = a'_e, m_e,
m'_e) : e \in E \}$.
\begin{enumerate}
\item Define the scalar
\begin{equation}
\at := \sum_{e \in E} \frac{a_e (m_e - 1)(m'_e - 1)}{m_e m'_e - 1}.
\end{equation}
In working with $\at$ in the proof below, we will assume that $m_e m'_e -
1$ is invertible in $R$, as is (the numerator of the rational function)
$\at$, via a Zariski density argument.

\item Given adjacent nodes $i \sim j$, define $T_{i \to j}$ to be the
sub-tree induced on $i, j$, and all nodes $v \in V$ such that the path
from $i$ to $v$ passes through $j$. Note this partitions the edge-set:
\[
E = \bigsqcup_{j : j \sim i}  E(T_{i \to j}), \qquad \forall i \in V.
\]

\item Finally, define for $i \in V$ the scalar
\begin{equation}
\beta_i := \frac{1}{\at} \sum_{j : j \sim i} \frac{1}{a_{ij}} \sum_{e \in
E(T_{i \to j})} \frac{a_e (m_e - 1)(m'_e - 1)}{m_e m'_e - 1},
\end{equation}
where we assume by Zariski density that $a_e, m_e m'_e - 1$ are
invertible for all $e$, as is $\at$.
\end{enumerate}
\end{definition}

With this notation in hand, we finally define $C_\mathcal{T}$ to have
entries
\begin{equation}\label{EBmatrix}
(C_\mathcal{T})_{ij} := \begin{cases}
\beta_i, & \text{if } j=i;\\
\beta_i - \frac{1}{a_{ik}}, \qquad & \text{if } j \neq i, \ j \in T_{i
\to k}.
\end{cases}
\end{equation}

\begin{remark}\label{Rbeta}
If $i \in V$ is pendant with neighbor $p(i)$, then $\beta_i =
\frac{1}{a_{i,p(i)}}$ and $C_\mathcal{T}$ has $i$th row
$\frac{1}{a_{i,p(i)}} {\bf e}_i^T$.
\end{remark}

Now we are ready to compute the inverse of $\dt$, as asserted in
Theorem~\ref{Tinverse}:
\[
\dt^{-1} = \frac{1}{\at} \tauout \tauin^T - L_\mathcal{T} + C_\mathcal{T}
\diag(\tauin),
\]
where the vectors $\tauin, \tauout \in R^V$ and the Laplacian matrix
$L_\mathcal{T} \in R^{V \times V}$ were defined in
Theorem~\ref{Tinverse}. Note for this Laplacian that ${\bf e}^T
L_\mathcal{T} = 0$.

\begin{proof}[Proof of Theorem~\ref{Tinverse}]
This proof is self-contained, modulo one fact which was shown in the
``multiplicative'' special case above: the nonvanishing of $\det(\dt)$.
Hence we may assume via Zariski density that $\det(\dt) \in R^\times$. In
particular, we work throughout this proof over the ring $R = R_0 :=
\mathbb{Q}( \{ a_e, m_e, m'_e : e \in E \})$, and then observe that all
of the results proved hold (by Zariski density) over $\Z[ \{ a_e, m_e,
m'_e : e \in E \}]$, hence in any unital commutative ring by
specialization.

In addition to $\det(\dt)$, we also use via Zariski density that the
quantities
$a_e, \ m_e m'_e - 1, \ m_e - 1, \ m'_e - 1, \at$
are invertible in $R = R_0$. We use the following notation without
further reference: for nodes $i \neq j$ let $m_{ij}$ denote the product
of the edgeweights over the unique directed path in $T$ from $i$ to $j$.
Also set $m_{ii} = 1$. Let the vector ${\bf m}_{\bullet \to j} \in R^{V
\times 1}$ have $i$th coordinate $m_{ij}$; here $i,j \in V$.

We now prove the result; the proof is split into steps for ease of
exposition.\medskip

\noindent \textbf{Step 1.}
We begin by showing the following identities:
\begin{align}
\tauin^T {\bf m}_{\bullet \to l} = &\ 1, \qquad \forall l \in V;
\label{Eprojesh1}\\
\tauin^T {\bf e} = {\bf e}^T \tauout = &\ 1 - \sum_{e \in E} \frac{(m_e -
1)(m'_e - 1)}{m_e m'_e - 1}; \label{Eprojesh2}\\
{\bf e}^T C_\mathcal{T} = &\ \frac{1}{\at} \sum_{e \in E} \frac{(m_e -
1)(m'_e - 1)}{m_e m'_e - 1} \cdot {\bf e}^T; \label{Eprojesh3}
\end{align}

To prove~\eqref{Eprojesh1}, first convert the sum over nodes into a sum
over edges. For each edge $e = \{ i, j \} \in E$, if $j$ lies on the path
between $i$ and $l$, then the terms in $(\tauin^T - {\bf e}^T) {\bf
m}_{\bullet \to l}$ on the left-hand side of~\eqref{Eprojesh1}
corresponding to $e = \{ i,j \}$ contribute precisely
\[
(1 - m_e m'_e)^{-1} \left( m_{ji} (m_{ij} - 1) m_{il} + m_{ij} (m_{ji} -
1) m_{jl} \right) = -m_{il},
\]
where the positions of $i,j,l$ imply: $m_{il} = m_{ij} m_{jl}$.
Therefore,
\begin{equation}\label{Ename}
\tauin^T {\bf m}_{\bullet \to l} =
\sum_{i \in V} m_{il} - \sum_{e \in E} m_{i_l(e) l},
\end{equation}
where $i_l(e)$ is the vertex of $e \in E$ that is farther away from $l$.
Now the map $i_l : E \to V \setminus \{ l \}$ is a bijection, since $T$
is a tree. Hence the above computation yields precisely $m_{ll} = 1$,
proving~\eqref{Eprojesh1}.

We next show~\eqref{Eprojesh2}. The first equality is easily shown by
converting sums over nodes to ones over edges. For the second equality,
by summing the components of $\tauout$ and again converting the sum over
nodes to one over edges, one obtains
\[
1 + \sum_{e \in E} \left( 1 - \frac{m_e (m'_e - 1)}{m_e m'_e - 1} -
\frac{m'_e (m_e - 1)}{m_e m'_e - 1} \right) = 1 + \sum_{e \in E}
\frac{-(m_e - 1)(m'_e - 1)}{m_e m'_e - 1},
\]
as desired.
We next turn to~\eqref{Eprojesh3}. For each $j \in V$, note that
\[
({\bf e}^T C_\mathcal{T})_j = \sum_{i \in V} \beta_i - \sum_{i \in V
\setminus \{ j \}} \frac{1}{a_{ik}},
\]
where the $k$ in the final summand is such that $j \in T_{i \to k}$ -- in
other words, $k$ is the neighbor of $i$ that is closest to $j$. Now by
the reasoning following~\eqref{Ename}, the latter sum can be converted
into a sum over edges to yield precisely $\displaystyle \sum_{e \in E}
\frac{1}{a_e}$. As for the former sum, first write for convenience:
\[
\varphi_e := \frac{a_e (m_e - 1)(m'_e - 1)}{m_e m'_e - 1};
\]
thus e.g.\ $\displaystyle \at = \sum_{e \in E} \varphi_e$. Now convert
$\sum_i \beta_i$, which is a sum over nodes, into one over edges:
\[
\sum_i \beta_i = \frac{1}{\at} \sum_i \sum_{k \sim i} \frac{1}{a_{ik}}
\sum_{e \in E(T_{i \to k})} \varphi_e
= \frac{1}{\at} \sum_{e = \{ i, j \} \in E} \frac{1}{a_e} \left(
\sum_{f \in E(T_{i \to j})} \varphi_f + 
\sum_{f \in E(T_{j \to i})} \varphi_f \right).
\]
The summand in the outer sum on the right is $\displaystyle
\frac{1}{a_e} \left( \at + \varphi_e \right)$. Combining these
shows~\eqref{Eprojesh3}:
\[
({\bf e}^T C_\mathcal{T})_j = \frac{1}{\at} \sum_{e \in E} \frac{\at +
\varphi_e}{a_e} - \sum_{e \in E} \frac{1}{a_e} = \frac{1}{\at} \sum_{e
\in E} \frac{\varphi_e}{a_e},
\]

\noindent \textbf{Step 2.}
We claim the following identity also holds for each fixed node $l \in V$:
\begin{equation}\label{Eprojesh4}
C_\mathcal{T} {\bf e}_l = (-L_\mathcal{T} + C_\mathcal{T} \diag(\tauin))
{\bf m}_{\bullet \to l}, \ \ l \in V.
\end{equation}

We will show the equality of the $i$th components, for all $i,l \in V$.
First note that the $i$th component of the left-hand side is $\beta_i$
for $i = l$, else it is $\beta_i - \frac{1}{a_{ik_0}}$, where $k_0 \sim
i$ is between $i$ and $l$.

We claim the $i$th component of the right-hand side is the same.
Let ${\bf r}_i$ denote the $i$th row of $-L_\mathcal{T} + C_\mathcal{T}
\diag(\tauin)$. To show ${\bf r}_i \cdot {\bf m}_{\bullet \to l}$
equals $\beta_i$ or $\beta_i - \frac{1}{a_{i k_0}}$, we will partition
the dot-product
\begin{equation}\label{Edot}
{\bf r}_i \cdot {\bf m}_{\bullet \to l} = \sum_{v \in V} ({\bf r}_i)_v
m_{vl}
\end{equation}
into sub-summations over $V(T_{i \to k}) \setminus \{ i \}$, running over
$k \sim i$ -- and to each such sum, we will add a component
of the $i$th summand of~\eqref{Edot}. (Recall that the sets $V(T_{i \to
k}) \setminus \{ i \}$ partition $V \setminus \{ i \}$.)

First write out the $i$th summand of~\eqref{Edot}:
\begin{equation}\label{Eithpart}
\sum_{k \sim i} \frac{-m_{ki}}{a_{ik} (m_{ik} m_{ki} - 1)} \cdot m_{il} +
(\tauin)_i \beta_i m_{il}.
\end{equation}

Consider only the sum of the terms in~\eqref{Edot} that contain the
$\beta_i$'s that come from $C_\mathcal{T}$ (i.e.~from $({\bf r}_i)_v$).
Each entry of the $i$th row of $C_\mathcal{T}$ contains a $\beta_i$ term,
and by~\eqref{Eprojesh1} these contribute
\[
\beta_i \,  {\bf e}^T \diag(\tauin) \cdot {\bf m}_{\bullet \to l} =
\beta_i \cdot \tauin^T {\bf m}_{\bullet \to l} = \beta_i.
\]

With these preliminaries, we proceed. Given a node $k \sim i$, let $G_k$
denote the sub-tree induced on $T_{i \to k} \setminus \{ i \}$; thus
$V(G_k) = V(T_{i \to k}) \setminus \{ i \}$. Using the above
observations, the sum~\eqref{Edot} equals
\begin{equation}\label{Eprojesh5}
{\bf r}_i \cdot {\bf m}_{\bullet \to l} = \sum_{v \in V} ({\bf r}_i)_v
m_{vl} = \beta_i + \sum_{k \sim i} \Psi_k,
\end{equation}
where
\begin{equation}
\Psi_k := \frac{-m_{ki}}{a_{ik} (m_{ik} m_{ki} - 1)} \cdot m_{il} +
\frac{m_{ik}}{a_{ik} (m_{ik} m_{ki} - 1)} \cdot m_{kl} +
\frac{-1}{a_{ik}} \sum_{v \in V(G_k)} (\tauin)_v m_{vl}.
\end{equation}

We now claim that $\Psi_k = 0$ if $l \not\in V(G_k)$. Indeed, $m_{vl} =
m_{vk} m_{kl}$ for $v \in V(G_k)$, and taking $m_{kl}$ common outside of
the latter sum yields an expression analogous to~\eqref{Eprojesh1}, but
for $G_k$. There is but one extra component in this sum in the $v=k$
term:
\begin{align*}
\Psi_k = &\ \frac{-m_{ki}}{a_{ik} (m_{ik} m_{ki} - 1)} \cdot m_{il} +
\frac{m_{ik}}{a_{ik} (m_{ik} m_{ki} - 1)} \cdot m_{kl} +
\frac{-m_{kl}}{a_{ik}} \left( (\tauin^{(G_k)})^T {\bf m}^{(G_k)}_{\bullet
\to k} - \frac{m_{ik} (m_{ki}-1)}{m_{ik} m_{ki} - 1} \right)\\
= &\ \frac{-m_{ki}}{a_{ik} (m_{ik} m_{ki} - 1)} \cdot m_{il} +
\frac{m_{kl}}{a_{ik} (m_{ik} m_{ki} - 1)} \left( m_{ik} - (m_{ik} m_{ki}
- 1) + m_{ik} (m_{ki} - 1) \right)\\
= &\ \frac{-m_{ki}}{a_{ik} (m_{ik} m_{ki} - 1)} \cdot m_{il} +
\frac{m_{kl}}{a_{ik} (m_{ik} m_{ki} - 1)}.
\end{align*}
(Here, $\tauin^{(G_k)}, {\bf m}^{(G_k)}_{\bullet \to k}$ are clear from
context.)
But this vanishes as $m_{kl} = m_{ki} m_{il}$. Hence $\Psi_k = 0$.

This already concludes the verification if $i=l$, by~\eqref{Eprojesh5}.
Thus we suppose now that $i \neq l$, in which case there is a unique $k_0
\sim i$ such that $l \in T_{i \to k_0}$. Again by~\eqref{Eprojesh5}, it
suffices to show that $\Psi_{k_0} = -1/a_{ik_0}$. But this essentially is
a repetition of the above computation, where now we \textit{do not} take
an $m_{kl}$ common. We leave the relevant details to the reader.\medskip

\noindent \textbf{Step 3.}
We now prove our formula for the inverse of $\dt$ by induction on $n$ (or
on $|E|$). The base case of $n=2$ is a straightforward verification. For
the induction step, we assume the formula for the inverse of $D = \dt$
for a tree on $k$ nodes:
\[
D^{-1} = \frac{1}{\alpha} \tauout \tauin^T - L + C \diag(\tauin).
\]
Here and below, $\alpha = \at$, $L = L_\mathcal{T}$, and $C =
C_\mathcal{T}$. Now using the
identities~\eqref{Eprojesh1}--\eqref{Eprojesh4} from Step~1,
\begin{equation}\label{Eprojesh6}
{\bf e}^T D^{-1} = \frac{1}{\alpha} \tauin^T, \qquad
D^{-1} {\bf m}_{\bullet \to k} = \frac{1}{\alpha} \tauout + C {\bf e}_k.
\end{equation}

Write $\dt$ over $k+1$ vertices as a block-matrix, assuming that node
$k+1$ is pendant and $k+1 \sim k$:
\begin{align}\label{EDbar}
\begin{aligned}
\overline{D} := &\ D_{\mathcal{T}_{k+1}} = \begin{pmatrix} D & {\bf u} \\
{\bf v}^T  & 0 \end{pmatrix},\\
\text{where } {\bf u} := &\ D {\bf e}_k + a_{k,k+1} (m_{k,k+1} - 1) {\bf
m}_{\bullet \to k},\\
\text{and } {\bf v}^T := &\ a_{k,k+1} (m_{k+1,k} - 1) {\bf e}(k)^T +
m_{k+1,k} {\bf e}_k^T D.
\end{aligned}
\end{align}

We write $\overline{D}, \overline{\alpha}, \overline{L},
\overline{\tauin}, \overline{\tauout}$ for the larger distance matrix
(i.e., on $k+1$ nodes). Thus,
\begin{align}
\overline{L} := &\ \begin{pmatrix}
\displaystyle L + \frac{m_{k+1,k}}{a_{k,k+1} (m_{k,k+1} m_{k+1,k} - 1)}
{\bf e}_k {\bf e}_k^T & \displaystyle
\frac{-m_{k,k+1}}{a_{k,k+1} (m_{k,k+1} m_{k+1,k} - 1)} {\bf e}_k\\
\displaystyle \frac{-m_{k+1,k}}{a_{k,k+1} (m_{k,k+1} m_{k+1,k} - 1)} {\bf
e}_k^T &
\displaystyle \frac{m_{k,k+1}}{a_{k,k+1} (m_{k,k+1} m_{k+1,k} - 1)}
\end{pmatrix}, \label{ELbar}\\
\overline{\tauin} := &\ \begin{pmatrix} \tauin - \displaystyle
\frac{m_{k+1,k} (m_{k,k+1}-1)}{m_{k,k+1} m_{k+1,k} - 1} {\bf e}_k \\
\displaystyle \frac{m_{k,k+1}-1}{m_{k,k+1} m_{k+1,k} - 1} \end{pmatrix},
\label{Etauinbar}\\
\overline{\tauout} := &\ \begin{pmatrix} \tauout - \displaystyle
\frac{m_{k,k+1} (m_{k+1,k}-1)}{m_{k,k+1} m_{k+1,k} - 1} {\bf e}_k \\
\displaystyle \frac{m_{k+1,k}-1}{m_{k,k+1} m_{k+1,k} - 1} \end{pmatrix},
\label{Etauoutbar}\\
\overline{\alpha} := &\ \alpha + \frac{a_{k,k+1} (m_{k,k+1}-1)
(m_{k+1,k}-1)}{m_{k,k+1} m_{k+1,k} - 1}.
\end{align}

Now recall the formula for the inverse of a block matrix -- via Schur
complements -- and apply this to the distance matrix~\eqref{EDbar}. The
following assumes by Zariski density that $D, \overline{D}$ are
invertible:
\begin{equation}\label{EDinv}
\overline{D}^{-1} = \begin{pmatrix}
D^{-1} + \psi^{-1} D^{-1} {\bf u} {\bf v}^T D^{-1} & - \psi^{-1} D^{-1}
{\bf u} \\
- \psi^{-1} {\bf v}^T D^{-1} & \psi^{-1}
\end{pmatrix},
\qquad \text{where } \psi := -{\bf v}^T D^{-1} {\bf u} = \frac{\det
\overline{D}}{\det D}.
\end{equation}

In this step we compute $\psi$ and show that $\psi^{-1}$ equals the
corresponding $(2,2)$-block of our claimed formula for the inverse:
\begin{equation}\label{EDinv-formula}
\frac{1}{\overline{\alpha}} \, \overline{\tauout} \, \overline{\tauin}^T
- \overline{L} + \overline{C} \diag (\overline{\tauin}),
\end{equation}
where this matrix is partitioned in the same manner as~\eqref{EDinv}.
The other three components of the matrix~\eqref{EDinv} will then be
computed in subsequent steps, together with showing that they equal the
corresponding blocks of the matrix~\eqref{EDinv-formula}. This will
conclude the proof. We begin by computing:
\begin{align*}
-\psi = {\bf v}^T D^{-1} {\bf u}
= &\ (a_{k,k+1} (m_{k+1,k} - 1) {\bf e}(k)^T + m_{k+1, k} {\bf e}_k^T D)
D^{-1} (D {\bf e}_k + a_{k,k+1} (m_{k,k+1} - 1) {\bf m}^{(k)}_{\bullet
\to k})\\
= &\ a_{k,k+1} (m_{k+1, k} - 1) {\bf e}(k)^T {\bf e}_k
+ a_{k,k+1}^2 (m_{k+1, k} - 1) (m_{k,k+1} - 1) {\bf e}(k)^T \cdot
D^{-1} \cdot {\bf m}^{(k)}_{\bullet \to k}\\
&\ + m_{k+1, k} {\bf e}_k^T D {\bf e}_k + a_{k,k+1} m_{k+1, k} (m_{k,k+1}
- 1) {\bf e}_k^T \cdot {\bf m}^{(k)}_{\bullet \to k}.
\end{align*}

\noindent The third term is zero; the first and fourth terms add up to
$a_{k,k+1} (m_{k,k+1} m_{k+1,k} - 1)$. Hence,
\begin{equation}\label{Epsi}
\psi = a_{k,k+1} (1 - m_{k,k+1} m_{k+1,k}) - \frac{a_{k,k+1}^2}{\alpha}
(m_{k,k+1} - 1)(m_{k+1,k} - 1)
= \frac{a_{k,k+1} (1 - m_{k,k+1} m_{k+1,k}) \overline{\alpha}}{\alpha},
\end{equation}
where the first equality follows from~\eqref{Eprojesh6}
and~\eqref{Eprojesh1}.

As discussed following~\eqref{EDinv}, we now show that $\psi^{-1}$ equals
the $(2,2)$-block of~\eqref{EDinv-formula}, to complete this step of the
proof of Theorem~\ref{Tinverse}. The latter $(2,2)$-block equals the
scalar
\begin{align*}
&\ \frac{1}{\overline{\alpha}} 
\frac{m_{k,k+1}-1}{m_{k,k+1} m_{k+1,k} - 1}
\frac{m_{k+1,k}-1}{m_{k,k+1} m_{k+1,k} - 1} - 
\frac{m_{k,k+1}}{a_{k,k+1}(m_{k,k+1} m_{k+1,k} - 1)} +
\frac{1}{a_{k,k+1}} \frac{m_{k,k+1}-1}{m_{k,k+1} m_{k+1,k} - 1},
\end{align*}
where the final term comes from Remark~\ref{Rbeta}. Now an easy
computation simplifies this to (by~\eqref{Epsi})
\[
\frac{\alpha}{a_{k,k+1} (1 - m_{k,k+1} m_{k+1,k}) \overline{\alpha}} =
\psi^{-1}.
\]

\noindent \textbf{Step 4.}
Next, we show the $(2,1)$-blocks of the expressions~\eqref{EDinv}
and~\eqref{EDinv-formula} agree. The former is
\[
-\psi^{-1} {\bf v}^T D^{-1} = \frac{m_{k+1,k} - 1}{\overline{\alpha}
(m_{k,k+1} m_{k+1,k} - 1)} \tauin^T + \frac{\alpha \,
m_{k+1,k}}{\overline{\alpha} \, a_{k,k+1} (m_{k,k+1} m_{k+1,k} - 1)} {\bf
e}_k^T,
\]
using~\eqref{Eprojesh6} and~\eqref{Epsi}. By the definition of $\alpha,
\overline{\alpha}$, a straightforward computation shows this equals
\[
\frac{m_{k+1,k} - 1}{\overline{\alpha} \, (m_{k,k+1} m_{k+1,k} - 1)}
\left( \tauin^T - \frac{m_{k+1,k} (m_{k,k+1} - 1)}{m_{k,k+1} m_{k+1,k} -
1} {\bf e}_k^T \right) + \frac{m_{k+1,k}}{a_{k,k+1} (m_{k,k+1} m_{k+1,k}
- 1)} {\bf e}_k^T,
\]
which equals the $(2,1)$-block of~\eqref{EDinv-formula} via
Remark~\ref{Rbeta} for $i=k+1$.\medskip

We now consider the $(1,2)$-blocks in~\eqref{EDinv}
and~\eqref{EDinv-formula}. We begin with the former;
using~\eqref{Eprojesh6},
\begin{align*}
&\ - \psi^{-1} D^{-1} {\bf u}\\
= &\ \frac{m_{k,k+1} - 1}{\overline{\alpha} (m_{k,k+1} m_{k+1,k} - 1)}
\tauout
+ \frac{\alpha (m_{k,k+1} - 1)}{\overline{\alpha} (m_{k,k+1} m_{k+1,k} -
1)} C {\bf e}_k
+ \frac{\alpha}{\overline{\alpha} \, a_{k,k+1} (m_{k,k+1} m_{k+1,k} - 1)}
{\bf e}_k.
\end{align*}

\noindent Now add-and-subtract two terms to get:
\begin{align*}
= &\ \frac{m_{k,k+1} - 1}{\overline{\alpha} (m_{k,k+1} m_{k+1,k} - 1)}
\left( \tauout - \frac{m_{k,k+1} (m_{k+1,k} - 1)}{m_{k,k+1} m_{k+1,k} -
1} {\bf e}_k \right)\\
&\ + \frac{m_{k,k+1} (m_{k,k+1} - 1) (m_{k+1,k} - 1)}{\overline{\alpha}
(m_{k,k+1} m_{k+1,k} - 1)^2} {\bf e}_k +
\frac{\alpha \, m_{k,k+1}}{\overline{\alpha} \, a_{k,k+1} (m_{k,k+1}
m_{k+1,k} - 1)} {\bf e}_k\\
&\ - \frac{\alpha \, m_{k,k+1}}{\overline{\alpha} \, a_{k,k+1} (m_{k,k+1}
m_{k+1,k} - 1)} {\bf e}_k + \frac{\alpha}{\overline{\alpha} \,
a_{k,k+1} (m_{k,k+1} m_{k+1,k} - 1)} {\bf e}_k + \frac{\alpha (m_{k,k+1}
- 1)}{\overline{\alpha} (m_{k,k+1} m_{k+1,k} - 1)} C {\bf e}_k.
\end{align*}

\noindent The terms on the first line (resp.~second line) in the
right-hand side here add up to yield the $(1,2)$-block of
$\frac{1}{\overline{\alpha}} \overline{\tauout} \; \overline{\tauin}^T$
(resp.~of $-\overline{L}$). The terms in the third line add up to yield
\[
\frac{\alpha (m_{k,k+1} - 1)}{\overline{\alpha} (m_{k,k+1} m_{k+1,k} -
1)} \left( C {\bf e}_k - \frac{1}{a_{k,k+1}} {\bf e}_k \right),
\]
and a straightforward but careful calculation shows this equals the
$(1,2)$-block of $\overline{C} \diag(\overline{\tauin})$. This concludes
the verification for the $(1,2)$-blocks of~\eqref{EDinv}
and~\eqref{EDinv-formula}.\medskip

\noindent \textbf{Step 5.}
In this final step, we handle the most involved of the computations: the
equality of the $(1,1)$-blocks of~\eqref{EDinv}
and~\eqref{EDinv-formula}. As the computations are fairly involved, we
begin by outlining the strategy. We expand out the expression
\[
D^{-1} + \psi^{-1} \cdot (D^{-1} {\bf u}) \cdot ({\bf v}^T D^{-1}),
\]
substituting $D^{-1} = \frac{1}{\alpha} \tauout \tauin^T - L
+ C \diag(\tauin)$ in the first term, and computing $D^{-1} {\bf u}$,
${\bf v}^T D^{-1}$ using the intermediate identities proved above. Then
we rearrange terms to obtain expressions for the $(1,1)$-blocks of
$\frac{1}{\overline{\alpha}} \overline{\tauout} \; \overline{\tauin}^T$
and $-\overline{L}$, plus some extra terms. Finally, these extra terms
will be shown to add up to the $(1,1)$-block of $\overline{C}
\diag({\overline{\tauin}})$.

Thus, we begin with
\begin{align}
&\ D^{-1} + \psi^{-1} \cdot (D^{-1} {\bf u}) \cdot ({\bf v}^T D^{-1})
\label{E11block1}\\
= &\ \frac{1}{\alpha} \tauout \tauin^T - \frac{a_{k,k+1} (m_{k,k+1} -
1)(m_{k+1,k} - 1)}{\alpha \overline{\alpha} \, (m_{k,k+1} m_{k+1,k} - 1)}
\tauout \tauin^T - \frac{m_{k+1,k} (m_{k,k+1} - 1)}{\overline{\alpha} \,
(m_{k,k+1} m_{k+1,k} - 1)} \tauout {\bf e}_k^T \notag\\
&\ -L - \frac{\alpha \, m_{k+1,k}}{\overline{\alpha} \, a_{k,k+1}
(m_{k,k+1} m_{k+1,k} - 1)} {\bf e}_k {\bf e}_k^T \notag\\
&\ + C \diag(\tauin) - \frac{m_{k+1,k}-1}{\overline{\alpha} \, (m_{k,k+1}
m_{k+1,k} - 1)} {\bf e}_k \tauin^T - \frac{\alpha \, m_{k+1,k} (m_{k,k+1}
- 1)}{\overline{\alpha} \, (m_{k,k+1} m_{k+1,k} - 1)} C {\bf e}_k {\bf
e}_k^T \notag\\
&\ - \frac{a_{k,k+1} (m_{k,k+1} - 1) (m_{k+1,k} - 1)}{\overline{\alpha}
\, (m_{k,k+1} m_{k+1,k} - 1)} C {\bf e}_k \tauin^T.\notag
\end{align}

Now the first three terms (on the first line on the right-hand side
of Equation~\eqref{E11block1}) add up to
\begin{align}\label{E11block2}
\begin{aligned}
&\ \frac{1}{\overline{\alpha}} \left( \tauout - \frac{m_{k,k+1}
(m_{k+1,k}-1)}{m_{k,k+1} m_{k+1,k} - 1} {\bf e}_k \right) \left( \tauin -
\frac{m_{k+1,k} (m_{k,k+1}-1)}{m_{k,k+1} m_{k+1,k} - 1} {\bf e}_k
\right)^T\\
+ &\ \frac{m_{k,k+1} (m_{k+1,k} - 1)}{\overline{\alpha} \, (m_{k,k+1}
m_{k+1,k} - 1)} {\bf e}_k \tauin^T
- \frac{m_{k,k+1} m_{k+1,k} (m_{k,k+1} - 1) (m_{k+1,k} -
1)}{\overline{\alpha} \, (m_{k,k+1} m_{k+1,k} - 1)^2} {\bf e}_k {\bf
e}_k^T.
\end{aligned}
\end{align}
Similarly, the next two terms (on the second line on the right-hand side
of~\eqref{E11block1}) add up to
\begin{equation}\label{E11block3}
\left( - L - \frac{m_{k+1,k}}{a_{k,k+1} (m_{k,k+1} m_{k+1,k} - 1)} {\bf
e}_k {\bf e}_k^T \right) + \frac{m_{k+1,k} (m_{k,k+1} - 1) (m_{k+1,k} -
1)}{\overline{\alpha} \, (m_{k,k+1} m_{k+1,k} - 1)^2} {\bf e}_k {\bf
e}_k^T.
\end{equation}

\noindent Notice by~\eqref{ELbar}, \eqref{Etauinbar}, \eqref{Etauoutbar}
that the first expression in~\eqref{E11block2} (resp.~\eqref{E11block3})
equals the $(1,1)$-block of $\frac{1}{\overline{\alpha}}
\overline{\tauout} \; \overline{\tauin}^T$ (resp.~$-\overline{L}$). Thus,
it remains to show that the $(1,1)$-block of $\overline{C}
\diag(\overline{\tauin})$ equals the sum of the remaining seven terms
in~\eqref{E11block1}, \eqref{E11block2}, \eqref{E11block3}, which we now
collect together:
\begin{align}\label{E11block4}
\begin{aligned}
&\ C \diag(\tauin)
- \frac{\alpha \, m_{k+1,k} (m_{k,k+1} - 1)}{\overline{\alpha} \,
(m_{k,k+1} m_{k+1,k} - 1)} C {\bf e}_k {\bf e}_k^T
- \frac{a_{k,k+1} (m_{k,k+1} - 1) (m_{k+1,k} - 1)}{\overline{\alpha}
\, (m_{k,k+1} m_{k+1,k} - 1)} C {\bf e}_k \tauin^T\\
+ &\ \frac{(m_{k,k+1} - 1) (m_{k+1,k} - 1)}{\overline{\alpha} \, (m_{k,k+1}
m_{k+1,k} - 1)} {\bf e}_k \tauin^T
- \frac{m_{k+1,k} (m_{k,k+1} - 1)^2 (m_{k+1,k} - 1)}{\overline{\alpha} \,
(m_{k,k+1} m_{k+1,k} - 1)^2} {\bf e}_k {\bf e}_k^T.
\end{aligned}
\end{align}
In~\eqref{E11block4}, the two expressions on the last line are each
obtained by combining two of the ``remaining seven terms'' above.
Now define the vector
\begin{equation}
\tauin' := \tauin - \frac{m_{k+1,k} (m_{k,k+1} - 1)}{m_{k,k+1}
m_{k+1,k} - 1} {\bf e}_k
\end{equation}
and notice this precisely equals the $(1,1)$-block of $\overline{\tauin}$
by~\eqref{Etauinbar}.
Then the last two terms -- all on the second line -- of~\eqref{E11block4}
add up to yield
\[
\frac{(m_{k,k+1} - 1) (m_{k+1,k} - 1)}{\overline{\alpha} \, (m_{k,k+1}
m_{k+1,k} - 1)} {\bf e}_k (\tauin')^T.
\]

Similarly, the first three terms -- all on the first line --
of~\eqref{E11block4} add up to give
\[
C \diag(\tauin') - \frac{a_{k,k+1} (m_{k,k+1} -
1)(m_{k+1,k}-1)}{\overline{\alpha} (m_{k,k+1} m_{k+1,k} - 1)} C {\bf e}_k
( \tauin' )^T.
\]
Indeed, break up the second term in~\eqref{E11block4} via $\alpha =
\overline{\alpha} - \frac{a_{k,k+1} (m_{k,k+1} -
1)(m_{k+1,k}-1)}{m_{k,k+1} m_{k+1,k} - 1}$, and add these components to
the first and third terms respectively, using $\diag(\tauin) + \gamma
{\bf e}_k {\bf e}_k^T = \diag(\tauin + \gamma {\bf e}_k)$.

Now since ${\bf e}_k (\tauin')^T = {\bf e}_k {\bf e}^T \diag(\tauin')$,
the terms in~\eqref{E11block4} all add up to
\[
\left( C - \frac{a_{k,k+1} (m_{k,k+1} -
1)(m_{k+1,k}-1)}{\overline{\alpha} (m_{k,k+1} m_{k+1,k} - 1)} (C {\bf
e}_k) {\bf e}^T + \frac{(m_{k,k+1} - 1) (m_{k+1,k} -
1)}{\overline{\alpha} \, (m_{k,k+1} m_{k+1,k} - 1)} {\bf e}_k {\bf e}^T
\right) \diag(\tauin').
\]
Notice the final summand only updates the final row of the first term.
Now another careful computation shows that the preceding expression
indeed equals the $(1,1)$-block of $\overline{C}
\diag(\overline{\tauin})$. This concludes the proof of the
inverse-formula, by induction.
\end{proof}

As a by-product of the above proof, we can now show the remaining main
theorem above:

\begin{proof}[Proof of Theorem~\ref{Tmaster}]
The formula~\eqref{Emaster} (with $x=0$) for $\det(\dt)$ easily follows
by induction on $|E|$ and~\eqref{EDinv},~\eqref{Epsi}.
Moreover, the formula for $\cof(\dt) = \det(\dt) \cdot ({\bf e}^T
\dt^{-1} {\bf e})$ follows from~\eqref{Eprojesh6}, \eqref{Eprojesh2}, and
the formulas for $\dt^{-1}, \det(\dt)$. This shows~\eqref{Emult-more} for
$|I \Delta J'| = 0$. (Here, we assume $\dt$ is invertible by Zariski
density, since $\cof(\dt)$ is also a polynomial in the entries.)

Next using Cramer's rule, Theorem~\ref{Tinverse} provides a proof
of~\eqref{Emaster} for $|I \Delta J'| = 2$; we also provide an alternate
proof below. First, we show that $(\dt + xJ)_{I|J'}$ is singular if $|I
\Delta J'| > 2$. Let ${\bf d}_v^T$ denote the $J'$-truncated $v$th row
for $v \in V(G) \setminus I$. There are now two cases:\medskip

\noindent \textbf{Case 1:} First suppose $J' \setminus I$ contains an
edge $\{ j_1, j_2 \} \in E$. Then there exists a unique node $p \in T
\setminus J'$ that is closest to $j_2$. Without loss of generality,
interchange the labels $j_1, j_2$ such that $p$ is closer to $j_2$ than
$j_1$; then the path from $j_1$ to $p$ has length at least $2$. Denote
this path by
\[
j_1 \ \longleftrightarrow \ j_2 \ \longleftrightarrow \ \cdots \
\longleftrightarrow \ a \ \longleftrightarrow \ b \ \longleftrightarrow \
p.
\]
Then $a,b \in J'$ by the assumptions, and we re-set $j_1 := a, j_2 := b$,
so that $p \sim j_2$ and $j_2 \sim j_1$. Now $p$ cannot lie in $I$, else
the maximum sub-tree containing $p \not\in J'$ but not $j_2 \not\in J'$
(resp.~$j_2$ but not $p$) would completely lie in $I$ (resp.~$J'$), in
which case $V = I \cup J'$, a contradiction. We now have:
\[
{\bf d}^T_{j_1} - m_{j_1 j_2} {\bf d}^T_{j_2} = (a_{j_1 j_2} - x) (m_{j_1
j_2} - 1) {\bf e}^T, \qquad
{\bf d}^T_{j_2} - m_{j_2 p} {\bf d}^T_p = (a_{j_2 p} - x)(m_{j_2 p} -
1){\bf e}^T,
\]
and so the determinant of $\det(\dt + xJ)_{I|J'}$ vanishes, as
claimed.\medskip

\noindent \textbf{Case 2:} If the previous case does not hold, then $J'
\setminus I = J'$ contains only pendant vertices. Choose $j_1, j_2 \in J'$
with neighbors $p(j_1), p(j_2)$ respectively. Now it is clear from the
hypotheses that $p(j_l) \not\in I \cup J'$ for $l = 1,2$; and with the
same notation as in the previous case, we have
\[
{\bf d}^T_{j_l} - m_{j_l p(j_l)} {\bf d}^T_{p(j_l)} =
(a_{j_l p(j_l)} - x)(m_{j_l p(j_l)} - 1) {\bf e}^T, \qquad l = 1,2.
\]
Once again, it follows that $\det(\dt + xJ)_{I|J'}$ vanishes.\medskip

It remains to show~\eqref{Emult-more} when $|I \Delta J'| = 2$, and we
prove it by induction on $n \geq |I|+2$. The assertion is not hard to
verify for $n = 3,4$, so we will assume henceforth that $\mathcal{T}$ has
at least $n+1 \geq 5$ nodes, and that~\eqref{Emult-more} holds for all
trees with at most $n$ nodes. Without loss of generality we set $I = \{ 1
\}$ and $J' = \{ n+1 \}$, with both $1,n+1$ pendant vertices. Since every
tree on at least $3$ nodes has at least two pendant nodes and these are
necessarily non-adjacent, we also relabel the nodes such that if one
deletes the node $1$ (respectively, $n+1$) then the node $2$
(respectively, $n$) is pendant to the deleted portion of the tree. We may
also assume the nodes $2,n$ are not adjacent, since $n \geq 5$.

Let $D := (\dt + xJ)_{1|n+1}$; we compute $\det(D)$ using Dodgson
condensation~\cite{Dodgson}, which says:
\begin{equation}\label{Edodgson}
\det D \cdot \det D_{1n|1n} = \det D_{1|1} \cdot \det D_{n|n} - \det
D_{1|n} \cdot \det D_{n|1}.
\end{equation}
Note that
\begin{align*}
D_{1|1} = &\ (\dt + xJ)_{12|1(n+1)}, \qquad
D_{n|n} = (\dt + xJ)_{1(n+1)|n(n+1)}, \\
D_{1|n} = &\ (\dt + xJ)_{12|n(n+1)}, \qquad
D_{n|1} = (\dt + xJ)_{1(n+1)|1(n+1)}, \\
D_{1n|1n} = &\ (\dt + xJ)_{12(n+1)|1n(n+1)}.
\end{align*}
Since $n+1 \geq 5$, it follows by the preceding case of $|I \Delta J'| >
2$ that $\det D_{1|n} = 0$. Hence by~\eqref{Edodgson},
\begin{align*}
\det D = &\ (\det D_{1n|1n})^{-1} (\det D_{1|1}) (\det D_{n|n})\\
= &\ (\det (\dt + xJ)_{12(n+1)|1n(n+1)})^{-1} (\det (\dt +
xJ)_{12|1(n+1)}) (\det (\dt + xJ)_{1(n+1)|n(n+1)}).
\end{align*}
But all three terms on the right are computable by the induction
hypothesis; and we may assume by Zariski density (i.e.~a suitable
application of Lemma~\ref{Lzariski}) that the factors of $\det D_{1n|1n}$
and all $a_e$ and $m_e m'_e - 1$ are invertible, namely:
\[
a_e, \quad m_e m'_e - 1, \ (e \in E); \qquad a_{p(n),n} - x, \quad
m_{p(2),2} - 1, \quad m_{p(n),n} - 1.
\]
Now the induction step follows by a straightforward cancellation,
completing the proof.
\end{proof}

\begin{remark}\label{Rnonpendant}
The above proof for $|I \Delta J'| > 2$ also goes through verbatim for
arbitrary $I$ of size $|J'|$, which do not contain the two nodes $a, b,
p$ in case 1, or $j_1, j_2, p(j_1), p(j_2)$ in case 2. Alternately, one
can work with the transpose of $\dt + xJ$, and hence with $J'$ as
specified but more general $I$.
\end{remark}

\begin{remark}[New technique: Zariski density arguments]\label{Rzariski}
We now make some remarks on our -- to our knowledge, novel -- use of
Zariski density in this paper. There are at least three advantages:
\begin{enumerate}
\item As cited in the introduction, various papers work with
e.g.~$q$-distance matrices, under the assumption that $q$ is a real
number and $q \neq \pm 1$; or that $q$ is a parameter (see
e.g.~\cite{BLP1,BR,YY2}). Zariski density enables working over any unital
commutative ring, and immediately eliminates such restrictions.

\item Using Zariski density allows one to assume nonzero expressions such
as $\det(\dt)$ or $1 - m_e m'_e$ or $1 - m_e$ to then be invertible,
hence $\dt$. This provides stronger tools to prove results.

\item Zariski density can help clarify arguments. E.g.~in~\cite{YY1},
the authors tackle the original case $a_e = a'_e = 1$ and $m_e = m'_e =
q, \ q \to 1$, and show that $(\det D_T)^2 = - (\det D_T) |E|
(-2)^{|E|-1}$. Here one cannot \textit{a priori} cancel $\det D_T$,
unless one assumes somehow that $\det D_T \not\equiv 0$. In our case,
this can be done using Zariski density, because specializing to $a_e =
a'_e = 1\ \forall e$, it follows by Theorem~\ref{Tghhmult} that
$\det(\dt) \not\equiv 0$. This point seems not to have been made
in~\cite{YY1}, where the authors cancel $\det D_T$ in the above equation
-- in effect showing in the special case $x=0$, $a_e = a'_e = 1$, and
$m_e = m'_e = 1$, $q \to 1$ that $\det(\dt) = -|E| (-2)^{|E|-1}$ assuming
$\det(\dt) \neq 0$.\footnote{The authors have since mentioned to us
(personal communication) that they prove $\det(\dt) \neq 0$ in other
papers and hence can cancel it away. These other papers are not cited in
the proof in~\cite{YY1}.} Zariski density helps fill such gaps.\medskip
\end{enumerate}
Thus, we hope that the present work leads to Zariski density being used
in this area -- not just for distance matrices -- and also leads to the
removal of various superfluous/unnecessary mathematical restrictions, as
well as enabling one to use stronger tools, e.g.~the invertibility of
various matrices or the nonvanishing of certain polynomials.
\end{remark}

We end this section by briefly discussing instances of how
Theorem~\ref{Tinverse} specializes to known formulas in the
literature~\cite{BKN}--\cite{BS2}, \cite{GL,ZD1,ZD2}. First, for every
$q$-distance matrix, i.e.~with
$a_e = \frac{1}{q-1}$, $m_e = q^{\alpha_e}$, $m'_e = q^{\alpha'_e}$,
one verifies that
\[
C = (q-1) {\rm Id}_V, \qquad \at = \sum_{e \in E} \frac{ [\alpha_e]
[\alpha'_e]}{[\alpha_e + \alpha'_e]}.
\]
In particular, if one specializes to $q=1$ then $C=0$; if moreover the
tree is unweighted, we get
\[
\at = \frac{|E|}{2}, \qquad \tauin = \tauout = {\bf e} - {\bf d}/2,
\]
where ${\bf d}$ is the vector of node-degrees. These are precisely the
expressions that appear in the Graham--Lov\'asz formula for $\dt^{-1}$ in
the original unweighted and undirected setting~\cite{GL}.

A parallel setting involves the product distance matrix $D^*_T = \dt + J$
studied in~\cite{ZD1}. Here $a_e = 1\ \forall e$. Now one verifies that
$C = {\rm Id}_V$; moreover, $(D^*_T)^{-1}$ can be computed alternately
(to the argument in Section~\ref{Sghhmult}) by using the
Sherman--Morrison formula for $(\dt + J)^{-1}$. Carrying out the
computations using the formula for $\dt^{-1}$ and the identities shown
above yields precisely:
\[
(D^*_T)^{-1} = - L + \diag(\tauin).
\]
This specializes to several of the formulas in the literature cited in
the above discussion. Notice also that when $a_e = 1\ \forall e$, we have
$\displaystyle (D^*_T)^{-1} {\bf e} = \tauout, \ {\bf e}^T (D^*_T)^{-1} =
\tauin^T$;
this in particular reveals the presence of $\tauout, \tauin^T$ in a
formula for $\dt^{-1} = (D^*_T - J)^{-1}$, when going the reverse way via
the Sherman--Morrison formula.

\begin{remark}
Theorem~\ref{Tinverse} in particular answers an open question of
Bapat--Lal--Pati~\cite{BLP1}, where they ask for the explicit form of
$\dt^{-1}$ in the special case
$a_e = \frac{w_e}{q-1}, \ m_e = m'_e = q, \ e \in E$
with $q \neq \pm 1, w_e \neq 0$ scalars. For completeness we spell out
the specialization of Theorem~\ref{Tinverse} to this setting, assuming
all denominators below are invertible:
\[
\dt^{-1} = \frac{q+1}{\sum_{e \in E} w_e} \boldsymbol{\tau}
\boldsymbol{\tau}^T - \frac{q}{q+1} L_w + C_\mathcal{T}
\diag(\boldsymbol{\tau}),
\]
where $\boldsymbol{\tau} := {\bf e} - \frac{q}{q+1} {\bf d}$ for ${\bf
d}$ the vector of node-degrees, $L_w$ is the (symmetric) weighted
Laplacian matrix of Bapat--Kirkland--Neumann~\cite{BKN} given by
\[
(L_w)_{ij} = \begin{cases}
\frac{-1}{w_{ij}}, & \text{if } i \sim j;\\
\sum_{k \sim i} \frac{1}{w_{ik}}, \qquad & \text{if } i=j;\\
0, & \text{otherwise};
\end{cases}
\]
and $C_\mathcal{T}$ is the $V \times V$ matrix with entries given by
\[
(C_\mathcal{T})_{ij} := \beta_i - {\bf 1}_{i \neq j} \cdot
\frac{q-1}{w_{ik}} \quad \text{where} \quad j \in T_{i \to k} \quad
\text{and} \quad \beta_i := \frac{q-1}{\sum_{e \in E} w_e} \sum_{k : k
\sim i} \frac{1}{w_{ik}} \sum_{e \in E(T_{i \to k})} w_e.
\]
\end{remark}

\section{A novel, third invariant for trees, and its
Graham--Hoffman--Hosoya Theorem~\ref{Tghh3}}\label{Sthird}

In this paper, we have studied three variants of the distance matrix for
trees:
(a)~the most general version $\dt$ with entries given
by~\eqref{Eedgeweight};
(b)~the ``product'' distance matrix $D^*_G$; and
(c)~the $q$-matrix $D_q(G)$ (and its $q=1$ specialization, $D_1(G)$).
In Section~\ref{Sghhmult} we stated and proved Graham--Hoffman--Hosoya
type identities in settings~(b) and~(c) -- see Proposition~\ref{Pghh2ghh}
and the preceding equations.
The latter of these identities specialized to the classical
Graham--Hoffman--Hosoya identities~\eqref{Eghh}.

It is natural to ask if there exist similar identities in the ``most
general'' setting~(a) of the present paper -- and also whether or not
these specialize to the original results~\eqref{Eghh} of~\cite{GHH}. In
this final section, we affirmatively answer both questions, and in
particular, provide a third proof of the formula~\eqref{Emaster} for
$\det(\dt), \cof(\dt)$. Our new identities below will use $\det(\cdot)$
but not $\cof(\cdot)$, and in its place we now introduce a novel, third
invariant:

\begin{definition}\label{Dthird}
Suppose $G$ is a finite directed, strongly connected graph with node set
$V$, a distinguished cut-vertex $v_0 \in V$, and $R$-valued maps $d, m :
V \times V \to R$ that satisfy:
\begin{equation}\label{Edistance}
d(v,w) = d(v,v_0) + m(v,v_0) d(v_0, w), \quad d(v_0,v_0) = 0,
\end{equation}
whenever $v,w$ lie in adjacent strong blocks, both containing $v_0$.
Given a subgraph $G'$ induced on the subset of nodes $V'$ which contains
$v_0$, write
\[
D_{G'} := (d(v,w))_{v,w \in V'} = \begin{pmatrix} D|_{V' \setminus \{ v_0
\}} & {\bf u}_1 \\ {\bf w}_1^T & 0 \end{pmatrix},
\]
by relabelling the nodes, and define the invariant
\begin{equation}\label{Ekappa}
\kappa(D_{G'}, v_0) := \det \left( D|_{V' \setminus \{ v_0 \}} - {\bf
u}_1 \, {\bf e}^T - {\bf m}(V' \setminus \{ v_0 \}, v_0) {\bf w}_1^T
\right).
\end{equation}
Note here that ${\bf u}_1 = {\bf d}(V' \setminus \{ v_0 \}, v_0)$ and
${\bf w}_1 = {\bf d}(v_0, V' \setminus \{ v_0 \})$.
\end{definition}

\begin{remark}
For arbitrary graphs $G$, the notion of distance matrix in
Definition~\ref{Dthird} is a general one. When one works with $G = T$ a
tree, this data is precisely that of our setting in~\eqref{Eedgeweight},
via:
$d_e \leftrightarrow a_e(m_e - 1), \
d'_e \leftrightarrow a_e(m'_e - 1)$,
since $m_e, m'_e$ are parameters (hence unequal in general to $1$).
In this case, one has~\eqref{Edistance} for any two nodes $v,w$ and any
intermediate node $v_0$.
\end{remark}

With this terminology in hand, we state three results on the invariant
$\kappa$, deferring the proofs to a later subsection. These results
compute $\kappa(D_G, v_0)$ for graphs $G$ with a cut-vertex $v_0$, as
well as $\kappa(\dt)$ for arbitrary trees. We will then mention a few
corollaries, and end with Example~\ref{Exthird} which shows that our
results are, once again, ``best possible'' in a sense.

Our first -- and final main -- result presents GHH-type identities for
the three invariants:

\begin{utheorem}\label{Tghh3}
Notation as in Definition~\ref{Dthird}. Let $G_1, \dots, G_k$ be
strongly connected subgraphs of $G$ containing $v_0 \in V$ such that the
sets $V(G_j) \setminus \{ v_0 \}$ are pairwise disjoint. Then,
\begin{align}
\begin{aligned}
\kappa(D_G, v_0) = &\ \prod_{j=1}^k \kappa(D_{G_j}, v_0),\\
\det(D_G) = &\ \sum_{j=1}^k \det(D_{G_j}) \prod_{i \neq j}
\kappa(D_{G_i}, v_0),\\
\cof(D_G) = &\ \kappa(D_G,v_0) + \sum_{j=1}^k (\cof(D_{G_j}) -
\kappa(D_{G_j}, v_0)) \prod_{i \neq j} \kappa(D_{G_i}, v_0).
\end{aligned}
\end{align}
In other words, if $\kappa(D_{G_j},v_0) \in R^\times\ \forall j$, then
\[
\frac{\det(D_G)}{\kappa(D_G, v_0)} = \sum_{j=1}^k
\frac{\det(D_{G_j})}{\kappa(D_{G_j}, v_0)}, \qquad
\frac{\cof(D_G)}{\kappa(D_G, v_0)} - 1 = \sum_{j=1}^k
\left( \frac{\cof(D_{G_j})}{\kappa(D_{G_j}, v_0)} - 1 \right).
\]
\end{utheorem}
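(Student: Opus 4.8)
The plan is to induct on $k$, reducing to the case $k=2$ in which $v_0$ separates $G$ into two pieces $G_1, G_2$ (we may assume $V = \{v_0\}\sqcup\bigsqcup_j(V(G_j)\setminus\{v_0\})$ and that every edge of $G$ lies inside some $G_j$). For $k\geq 3$ one groups $G_1,\dots,G_{k-1}$ into the induced subgraph $G'$ on $\{v_0\}\cup\bigcup_{j<k}(V(G_j)\setminus\{v_0\})$ — which still contains $v_0$ and inherits the distance hypothesis of Definition~\ref{Dthird} — and applies the $k=2$ case to the pair $(G',G_k)$ together with the induction hypothesis to $G'=G_1\cup\cdots\cup G_{k-1}$; the case $k=1$ is vacuous. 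So assume $k=2$.

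Order the nodes as $V(G_1)\setminus\{v_0\}$, then $v_0$, then $V(G_2)\setminus\{v_0\}$, and write
\[
D_G=\begin{pmatrix} D_1 & {\bf p}_1 & P \\ {\bf q}_1^T & 0 & {\bf q}_2^T \\ Q & {\bf p}_2 & D_2 \end{pmatrix},
\]
where $D_\ell,{\bf p}_\ell,{\bf q}_\ell^T$ are the $G_\ell$-block, $v_0$-column and $v_0$-row of $D_{G_\ell}$. Since $d(v,w)=d(v,v_0)+m(v,v_0)\,d(v_0,w)$ for $v,w$ in the two different branches at $v_0$, the off-diagonal blocks have rank-two structure: $P={\bf p}_1{\bf e}^T+{\bf m}_1{\bf q}_2^T$ and $Q={\bf p}_2{\bf e}^T+{\bf m}_2{\bf q}_1^T$, with ${\bf m}_\ell:={\bf m}(V(G_\ell)\setminus\{v_0\},v_0)$. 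Now perform, for every $v\neq v_0$, the row operation $(\text{row }v)\mapsto(\text{row }v)-m(v,v_0)(\text{row }v_0)$, and then for every $j\neq v_0$ the column operation $(\text{col }j)\mapsto(\text{col }j)-(\text{col }v_0)$. A direct check (using the rank-two structure and the chosen ordering) shows $D_G$ is carried to the ``arrowhead'' matrix with $v_0$-row/column $({\bf q}_1^T\ 0\ {\bf q}_2^T)$, $({\bf p}_1\ 0\ {\bf p}_2)^T$ and diagonal blocks $K_\ell:=D_\ell-{\bf p}_\ell{\bf e}^T-{\bf m}_\ell{\bf q}_\ell^T$; note $\det K_\ell=\kappa(D_{G_\ell},v_0)$ by definition, and the same operations turn $D_{G_\ell}$ into $\left(\begin{smallmatrix} K_\ell & {\bf p}_\ell \\ {\bf q}_\ell^T & 0\end{smallmatrix}\right)$. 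Moreover the matrix defining $\kappa(D_G,v_0)$ is immediately seen to equal $\diag(K_1,K_2)$, giving $\kappa(D_G,v_0)=\kappa(D_{G_1},v_0)\,\kappa(D_{G_2},v_0)$ with no further work.

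For the $\det$- and $\cof$-identities I would compute $\det(D_G+xJ)$ and read off both coefficients via Lemma~\ref{Ldetcof}. Applying the same elementary operations to $D_G+xJ$, the summand $xJ$ is carried to the matrix all of whose columns vanish except the $v_0$-column, which equals $x\cdot({\bf e}-{\bf m}_1\,;\,1\,;\,{\bf e}-{\bf m}_2)^T$. Assuming — by Zariski density via Lemma~\ref{Lzariski}, since $\det K_\ell$ is a nonzero polynomial in the free entries (specialize ${\bf m}_\ell={\bf p}_\ell=0$ and use that a generic hollow matrix has nonzero determinant) — that $K_1,K_2$ are invertible, a Schur-complement expansion along the $v_0$-row/column gives
\[
\det(D_G+xJ)=\kappa_1\kappa_2\Bigl(x-{\bf q}_1^T K_1^{-1}({\bf p}_1+x({\bf e}-{\bf m}_1))-{\bf q}_2^T K_2^{-1}({\bf p}_2+x({\bf e}-{\bf m}_2))\Bigr),
\]
$\kappa_\ell:=\kappa(D_{G_\ell},v_0)$, and likewise $\det(D_{G_\ell}+xJ)=\kappa_\ell\bigl(x-{\bf q}_\ell^T K_\ell^{-1}({\bf p}_\ell+x({\bf e}-{\bf m}_\ell))\bigr)$. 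By Lemma~\ref{Ldetcof} the latter identifies ${\bf q}_\ell^T K_\ell^{-1}{\bf p}_\ell=-\det(D_{G_\ell})/\kappa_\ell$ and ${\bf q}_\ell^T K_\ell^{-1}({\bf e}-{\bf m}_\ell)=1-\cof(D_{G_\ell})/\kappa_\ell$. Substituting and extracting the constant term (which is $\det(D_G)$) and the coefficient of $x$ (which is $\cof(D_G)$) yields exactly the $k=2$ instances $\det(D_G)=\det(D_{G_1})\kappa_2+\det(D_{G_2})\kappa_1$ and $\cof(D_G)=\kappa_1\kappa_2+(\cof(D_{G_1})-\kappa_1)\kappa_2+(\cof(D_{G_2})-\kappa_2)\kappa_1$. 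A final Zariski-density argument (both sides of each identity are polynomials in the $D_{G_j}$-entries and the scalars $m(v,v_0)$, agreeing where $\det(D_G)\prod_\ell\kappa_\ell\neq 0$) passes from $R_0=\mathbb{Q}(\cdots)$ to an arbitrary commutative ring $R$, and the displayed quotient identities follow by dividing.

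The main obstacle is bookkeeping rather than conceptual: one must verify carefully that the two families of elementary operations really do produce the stated arrowhead form — this genuinely uses the rank-two structure of $P,Q$ and the specific ordering of rows and columns — and that the coefficient of $x$ collapses to the asserted $\cof$-formula. The conceptual point, namely choosing $\kappa$ so that the Schur complement along $v_0$ factors cleanly through the blocks $D_{G_j}$, is exactly what the definition of $\kappa$ in Definition~\ref{Dthird} is engineered to provide.
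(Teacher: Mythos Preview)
Your proof is correct and follows essentially the same approach as the paper: induct to $k=2$, write $D_G$ in block form using the rank-two structure of the off-diagonal blocks, perform the row operations $R_v\mapsto R_v-m(v,v_0)R_{v_0}$ and column operations $C_j\mapsto C_j-C_{v_0}$ to reach the arrowhead form with diagonal blocks $K_\ell=D'_\ell$, then Schur-complement (under the Zariski-density assumption that the $K_\ell$ are invertible). The one minor difference is that the paper treats $\det(D_G)$ and $\cof(D_G)$ by two separate computations (for $\cof$ it first does only the column operations on $D_G+xJ$, extracts the linear term, and then continues), whereas you handle both at once by applying the full set of operations to $D_G+xJ$, reading off the modified $v_0$-column $x({\bf e}-{\bf m}_1;1;{\bf e}-{\bf m}_2)^T$, and extracting the constant and linear coefficients from a single Schur-complement identity; this is a slight streamlining but not a different idea.
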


\noindent Notice these formulas are similar in form to~\eqref{Eghh}.

Next, we show that $\kappa(\cdot)$ is indeed an invariant for trees, as
stated above:

\begin{theorem}\label{Tthird}
Suppose $\mathcal{T} = \{ (a_e, m_e, m'_e) : e \in E \}$ comprise the
edge-data of $T$, as in Theorem~\ref{Tmaster}. In this case we define
$\kappa(\dt, v_0)$ for any vertex $v_0 \in V$, by the same formula as
in~\eqref{Ekappa}. Then $\kappa(\dt, v_0)$ depends on neither the choice
of (cut or pendant) node $v_0 \in V$, nor the tree-structure of $\dt$. It
only depends on the edge-data, as follows:
\begin{equation}
\kappa(\dt, v_0) = \prod_{e \in E} a_e (1 - m_e m'_e).
\end{equation}
In particular, $\kappa(\dt, v_0)$ is multiplicative over subgraphs cut by
$v_0$.
When $\kappa(D_e)$ is invertible for all edges $e$, then one has,
parallel to Theorem~\ref{Tghh3}:
\begin{equation}\label{Eqthird}
\frac{\det(\dt)}{\kappa(\dt)} = \sum_{e \in E}
\frac{\det(D_e)}{\kappa(D_e)}, \qquad \qquad
\frac{\cof(\dt)}{\kappa(\dt)} - 1 = \sum_{e \in E}
\left(\frac{\cof(D_e)}{\kappa(D_e)} - 1 \right).
\end{equation}
\end{theorem}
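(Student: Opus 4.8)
The plan is to prove the closed form $\kappa(\dt,v_0)=\prod_{e\in E}a_e(1-m_em'_e)$ for \emph{every} vertex $v_0\in V$ (cut or pendant), by induction on $|V|$; the remaining assertions of Theorem~\ref{Tthird} then drop out easily. The base case $|V|=2$ is a single edge $e=\{u,v_0\}$: the matrix inside~\eqref{Ekappa} is $1\times1$ with entry $0-w_{u\to v_0}-m(u,v_0)w_{v_0\to u}=-a_e(m_{uv_0}-1)-m_{uv_0}a_e(m_{v_0 u}-1)=a_e(1-m_{uv_0}m_{v_0 u})$, which is unchanged when the roles of $u$ and $v_0$ are swapped.

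For the inductive step ($|V|\ge3$) the engine is a \textbf{leaf-removal reduction}: choose any pendant node $w\ne v_0$ (one exists, a tree on $\ge3$ nodes having two leaves), set $p:=p(w)$, $e_0:=\{w,p\}$ and $T':=T\setminus\{w\}$; I claim $\kappa(\dt,v_0)=a_{e_0}(1-m_{e_0}m'_{e_0})\,\kappa(D_{T'},v_0)$, which closes the induction since $E=E(T')\sqcup\{e_0\}$. To prove it, let $M$ and $M'$ be the matrices inside~\eqref{Ekappa} for $(T,v_0)$ and $(T',v_0)$ respectively. Since $w$ is pendant at $p$, every path to $w$ runs through $p$, and~\eqref{Edistance} gives $d(v,w)=d(v,p)+m(v,p)w_{p\to w}$, $d(w,v)=w_{w\to p}+m_{wp}d(p,v)$, $m(v,w)=m_{wp}m(p,v)$. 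Substituting these into the entries of $M$ yields three facts: (i)~$M'$ is exactly $M$ with its $w$-row and $w$-column deleted; (ii)~the $w$-row of $M$, off the diagonal, equals $m_{wp}$ times the $p$-row of $M'$; and (iii)~$M_{v,w}=M'_{v,p}+w_{p\to w}\bigl(m(v,p)-m(v,v_0)m(v_0,p)\bigr)$ for $v\ne w$ — and when $p=v_0$ one gets $M_{v,w}=0$ instead, which only simplifies matters.

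Write $M=\begin{pmatrix}M'&\mathbf{c}\\\mathbf{r}^T&M_{w,w}\end{pmatrix}$ with $w$ placed last; cofactor expansion along the bordering row and column gives $\det M=M_{w,w}\det M'-\mathbf{r}^T\adj(M')\mathbf{c}$. By (ii) together with $M'\adj(M')=\det(M')\Id$ one has $\mathbf{r}^T\adj(M')=m_{wp}\det(M')\mathbf{e}_p^T$, hence $\mathbf{r}^T\adj(M')\mathbf{c}=m_{wp}\det(M')\mathbf{c}(p)$; and (iii) gives $\mathbf{c}(p)=M'_{p,p}+w_{p\to w}\bigl(1-m(p,v_0)m(v_0,p)\bigr)$. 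The same substitutions compute $M_{w,w}=-w_{w\to p}+m_{wp}M'_{p,p}-m_{wp}m(p,v_0)m(v_0,p)w_{p\to w}$ (using $d(p,v_0)+m(p,v_0)d(v_0,p)=-M'_{p,p}$). Plugging these in, all terms involving $M'_{p,p}$ and $m(p,v_0)m(v_0,p)$ cancel, leaving
\[
\det M=\bigl(-w_{w\to p}-m_{wp}w_{p\to w}\bigr)\det M'=a_{e_0}(1-m_{e_0}m'_{e_0})\det M',
\]
as claimed. Every step here is an identity over an arbitrary commutative ring, so no Zariski-density argument is needed for the product formula (an alternative route to the cut-vertex case would be the branch decomposition in Theorem~\ref{Tghh3}, but the above is self-contained).

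The remaining claims follow at once. The product formula shows that $\kappa(\dt,v_0)$ is independent of $v_0$ and of the tree structure, and that it is multiplicative over the subtrees cut by a cut-vertex $v_0$ (whose edge sets partition $E$) — the tree case of the first line of Theorem~\ref{Tghh3}. For~\eqref{Eqthird}, assume each $\kappa(D_e)=a_e(1-m_em'_e)$ is invertible, whence $\kappa(\dt)=\prod_e\kappa(D_e)$ and each $m_em'_e-1$ are invertible too. A direct $2\times2$ computation gives $\det(D_e)=-a_e^2(m_e-1)(m'_e-1)$ and $\cof(D_e)=-a_e(m_e+m'_e-2)$, so $\det(D_e)/\kappa(D_e)=a_e(m_e-1)(m'_e-1)/(m_em'_e-1)$ and $\cof(D_e)/\kappa(D_e)-1=-(m_e-1)(m'_e-1)/(m_em'_e-1)$; meanwhile~\eqref{Emaster} reads precisely $\det(\dt)=\kappa(\dt)\sum_e a_e(m_e-1)(m'_e-1)/(m_em'_e-1)$ and $\cof(\dt)=\kappa(\dt)\bigl(1-\sum_e(m_e-1)(m'_e-1)/(m_em'_e-1)\bigr)$, and dividing by $\kappa(\dt)$ matches both identities of~\eqref{Eqthird} term by term. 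The main obstacle is the bookkeeping in the reduction lemma: one must carefully track how~\eqref{Edistance} for the leaf $w$ threads through every entry of $M$, so that the Schur-complement expansion telescopes down to the single-edge factor $a_{e_0}(1-m_{e_0}m'_{e_0})$; everything else is routine once the product formula is in hand.
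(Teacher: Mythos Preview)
Your proof is correct and takes a genuinely different route from the paper's. Both argue by induction on $|V|$ with the same base case, but the reductions differ. The paper splits on the type of $v_0$: if $v_0$ is a cut-vertex it invokes Theorem~\ref{Tghh3}, while if $v_0$ is pendant it removes $v_0$ itself, obtaining $\kappa(\dt,v_0)=a_{e_0}(1-m_{e_0}m'_{e_0})\,\kappa(D_{T\setminus\{v_0\}},\,p(v_0))$ via a single block-row operation that makes the defining matrix block lower-triangular. You instead remove a pendant $w\neq v_0$ and keep the base point fixed throughout, proving $\kappa(\dt,v_0)=a_{e_0}(1-m_{e_0}m'_{e_0})\,\kappa(D_{T\setminus\{w\}},v_0)$ by a bordered-determinant expansion together with the key observation that the $w$-row of $M$ is $m_{wp}$ times the $p$-row of $M'$. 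Your approach buys self-containment (no appeal to Theorem~\ref{Tghh3}) and a uniform argument for all $v_0$; the paper's pendant-case computation is slightly shorter (one row operation rather than tracking both $\mathbf{r}$ and $\mathbf{c}$), at the cost of needing the case split and the external input of Theorem~\ref{Tghh3}. Your handling of the edge case $p=v_0$ and the derivation of~\eqref{Eqthird} from~\eqref{Emaster} are both fine.
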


\begin{remark}
Akin to Remark~\ref{Rkaushal}, while one can use Theorem~\ref{Tthird} to
deduce the formulas for $\det(\dt), \cof(\dt)$ in Theorem~\ref{Tmaster},
once again the formulas for $\det(\cdot), \cof(\cdot)$ of the more
general submatrices $(\dt + xJ)_{I|J'}$ do \textit{not} follow from these
results.
\end{remark}

\begin{remark}
Given Theorem~\ref{Tthird}, we write $\kappa(\dt, v_0)$ as (the
invariant) $\kappa(\dt)$ henceforth.
\end{remark}

From Theorem~\ref{Tthird} it is possible to deduce the formulas for
$\det(\dt)$ and $\cof(\dt)$ as in~\eqref{Emaster} (or
Theorem~\ref{Tmaster} with $I = J' = \emptyset$). Our third result here
shows that the converse is also true:

\begin{prop}\label{Pthird}
Notation as in Theorem~\ref{Tthird}. The following can be deduced from
each other:
\begin{enumerate}
\item For all such trees and all nodes $v_0 \in V$,
$\det(\dt) = \displaystyle \prod_{e \in E} (a_e (1 - m_e m'_e)) \sum_{e
\in E} \frac{\det(D_e)}{\kappa(D_e)}$, where the denominators are
understood to be placeholders to cancel with a factor outside the sum.

\item For all such trees and all nodes $v_0 \in V$,
\[
\cof(\dt) = \prod_{e \in E} a_e (1 - m_e m'_e) \cdot
\left( 1 + \sum_{e \in E} \left( \frac{\cof(D_e)}{\kappa(D_e)} - 1
\right) \right),
\]
where the denominators are again placeholders.

\item For all such trees and all nodes $v_0 \in V$,
$\kappa(\dt) = \prod_{e \in E} a_e (1 - m_e m'_e)$. In particular,
$\kappa(\cdot)$ is multiplicative across edges of trees.
\end{enumerate}
\end{prop}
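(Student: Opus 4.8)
The plan is to prove the cycle of implications $(3)\Rightarrow(1)$, $(3)\Rightarrow(2)$, $(1)\Rightarrow(3)$, $(2)\Rightarrow(3)$, using Theorem~\ref{Tghh3} (and Theorem~\ref{Tthird} for the role of the base vertex) as the engine and a direct single-edge computation as the base case. \emph{First} I would dispose of the base case: for an edge $e=\{u,w\}$ with data $(a_e,m_e,m'_e)$, a $2\times2$ determinant gives $\det(D_e)=-a_e^2(m_e-1)(m'_e-1)$ and $\cof(D_e)=-a_e(m_e-1)-a_e(m'_e-1)$, while the $1\times1$ determinant in~\eqref{Ekappa} gives $\kappa(D_e,v_0)=a_e(1-m_em'_e)$ for either choice of $v_0\in\{u,w\}$. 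In particular $\det(D_e)/\kappa(D_e)=\frac{a_e(m_e-1)(m'_e-1)}{m_em'_e-1}$ and $\cof(D_e)/\kappa(D_e)-1=\frac{-(m_e-1)(m'_e-1)}{m_em'_e-1}$, which are exactly the summands occurring in~\eqref{Emaster}. Substituting these into the right-hand sides of (1)--(3) shows that (1) is the $\det$-part of~\eqref{Emaster}, (2) is the $\cof$-part, and (3) is the product formula $\kappa(\dt)=\prod_e a_e(1-m_em'_e)$; so it is enough to establish the equivalence of these three closed-form expressions.

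For $(3)\Rightarrow(1)$ and $(3)\Rightarrow(2)$ I would iterate Theorem~\ref{Tghh3}. The strong blocks of a tree are its edges, so splitting off a pendant edge $e_0=\{v_0,w\}$ and taking $G_1=e_0$, $G_2=T\setminus\{w\}$ falls under the hypotheses of Theorem~\ref{Tghh3} with $k=2$; an induction on $|E|$ (reading Theorem~\ref{Tthird} for the independence of $\kappa(\dt,v_0)$ from $v_0$, so that the recursion may be run along any chain of pendant edges) yields~\eqref{Eqthird} and the multiplicativity of $\kappa$:
\[
\frac{\det(\dt)}{\kappa(\dt)}=\sum_{e\in E}\frac{\det(D_e)}{\kappa(D_e)},\qquad \frac{\cof(\dt)}{\kappa(\dt)}-1=\sum_{e\in E}\left(\frac{\cof(D_e)}{\kappa(D_e)}-1\right),\qquad \kappa(\dt)=\prod_{e\in E}\kappa(D_e),
\]
valid over the rational-function field $R_0:=\mathbb{Q}(\{a_e,m_e,m'_e:e\in E\})$ where each $\kappa(D_e)$ is invertible. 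Granting (3) one substitutes $\kappa(\dt)=\prod_e a_e(1-m_em'_e)$ and $\kappa(D_e)=a_e(1-m_em'_e)$, clears the placeholder denominators $m_em'_e-1$ (multiply through by $\prod_e\kappa(D_e)$) to obtain genuine polynomial identities in $\Z[\{a_e,m_e,m'_e\}]$, invokes Zariski density (Lemma~\ref{Lzariski}) to conclude they hold identically, and then specializes the variables to any unital commutative ring $R$; this gives precisely (1) and (2).

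For the converse implications $(1)\Rightarrow(3)$ and $(2)\Rightarrow(3)$ I would use Theorem~\ref{Tghh3} only in its $k=2$ form at a single pendant edge. Fix a pendant edge $e_0=\{v_0,w\}$, put $T':=T\setminus\{w\}$, and apply Theorem~\ref{Tghh3} with $G_1=e_0$, $G_2=T'$:
\[
\det(\dt)=\det(D_{e_0})\,\kappa(D_{T'},v_0)+\det(D_{T'})\,\kappa(D_{e_0},v_0),\qquad \kappa(\dt,v_0)=\kappa(D_{e_0},v_0)\,\kappa(D_{T'},v_0).
\]
Assuming (1) for $T$ and for $T'$, and substituting the base-case values $\det(D_{e_0})=-a_{e_0}^2(m_{e_0}-1)(m'_{e_0}-1)$ and $\kappa(D_{e_0},v_0)=a_{e_0}(1-m_{e_0}m'_{e_0})$, the terms carrying $\kappa(D_{T'},v_0)\sum_{e\in E(T')}\det(D_e)/\kappa(D_e)$ cancel, leaving
\[
\det(D_{e_0})\Bigl(\kappa(D_{T'},v_0)-\textstyle\prod_{e\in E(T')}a_e(1-m_em'_e)\Bigr)=0
\]
over $R_0$; since $\det(D_{e_0})\neq0$ in $R_0$ this forces $\kappa(D_{T'},v_0)=\prod_{e\in E(T')}a_e(1-m_em'_e)$, and then the multiplicativity equation above gives $\kappa(\dt,v_0)=\prod_{e\in E}a_e(1-m_em'_e)$; independence from the choice of $v_0$ comes from Theorem~\ref{Tthird} (or from re-running the argument through a different pendant edge). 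The implication $(2)\Rightarrow(3)$ is the same computation with the $\cof$-identity of Theorem~\ref{Tghh3} in place of the $\det$-identity, the relevant non-vanishing now being $\cof(D_{e_0})-\kappa(D_{e_0},v_0)=a_{e_0}(m_{e_0}-1)(m'_{e_0}-1)\neq0$ in $R_0$.

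I expect the main obstacle to be the bookkeeping tying everything to Theorem~\ref{Tghh3}: one must keep the block normal form~\eqref{Ekappa} and the distinguished vertex $v_0$ consistent while iterating the decomposition down to single edges, and must clear the placeholder denominators cleanly before passing (via Zariski density, Lemma~\ref{Lzariski}) from $R_0$ to $\Z[\{a_e,m_e,m'_e\}]$ and then specializing to an arbitrary commutative ring. The cancellation steps in $(1)\Rightarrow(3)$ and $(2)\Rightarrow(3)$ are themselves short once the single-edge values of $\det(D_e),\cof(D_e),\kappa(D_e)$ are in hand and their non-vanishing over $R_0$ is noted; the truly load-bearing external input is Theorem~\ref{Tghh3}, whose own proof, along with that of Theorem~\ref{Tthird}, would be carried out first in this subsection.
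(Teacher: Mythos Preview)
Your approach is essentially the same as the paper's, with one presentational wrinkle worth flagging. For $(3)\Rightarrow(1)$ and $(3)\Rightarrow(2)$ you iterate Theorem~\ref{Tghh3} edge by edge, which is exactly what the paper does (in one line). For $(1)\Rightarrow(3)$ and $(2)\Rightarrow(3)$, you \emph{remove} a pendant edge $e_0=\{v_0,w\}$ from $T$ to get $T'$, apply (1) to both $T$ and $T'$, and solve for $\kappa(D_{T'},v_0)$ and then $\kappa(\dt,v_0)$; the paper instead \emph{attaches} a new pendant edge $e_0$ at an arbitrary $v_0\in V$ to form a larger tree $\mathcal{T}_0$, applies (1) to both $T$ and $\mathcal{T}_0$, and solves directly for $\kappa(\dt,v_0)$ from
\[
\det(D_{\mathcal{T}_0}) = \kappa(\dt,v_0)\det(D_{e_0}) + \kappa(D_{e_0},v_0)\det(\dt).
\]

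These are the same computation viewed from opposite ends, but the paper's framing has a real advantage you should adopt: attaching the extra edge at an \emph{arbitrary} $v_0$ makes $v_0$ a cut-vertex of $\mathcal{T}_0$, so Theorem~\ref{Tghh3} applies and yields $\kappa(\dt,v_0)$ for every $v_0\in V$ in one stroke. Your framing, by contrast, only produces $\kappa(\dt,v_0)$ for those $v_0$ that happen to be adjacent to a leaf of $T$; your appeal to Theorem~\ref{Tthird} to handle the remaining $v_0$ is circular (Theorem~\ref{Tthird} \emph{is} statement~(3)), and ``re-running through a different pendant edge'' still only reaches neighbors of leaves (e.g.\ the middle node of $P_5$ is never covered). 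The one-line fix is simply to reverse the direction: given an arbitrary pair $(T,v_0)$, attach a pendant edge at $v_0$ and run your cancellation on the enlarged tree --- which is precisely the paper's argument.
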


Next, from the above three results we deduce a few consequences. First,
the Graham--Hoffman--Hosoya type formulas proved in this section hold in
slightly greater generality:

\begin{cor}
Notation as in Definition~\ref{Dthird}. Let $G_1, \dots, G_k$ be strongly
connected subgraphs of $G$ containing $v_0 \in  V$ such that the sets
$V(G_j) \setminus \{ v_0 \}$ are pairwise disjoint. Also attach finitely
many pendant trees $\mathcal{T}_1, \dots, \mathcal{T}_l$ to $v_0$. Then
the formulas in Theorem~\ref{Tghh3} extend to this setting:
\begin{align}
\begin{aligned}
\kappa(D_G, v_0) = &\ \prod_{j=1}^k \kappa(D_{G_j}, v_0)
\prod_{i=1}^l \kappa(D_{\mathcal{T}_i}),\\
\frac{\det(D_G)}{\kappa(D_G, v_0)} = &\ 
\sum_{j=1}^k \frac{\det(D_{G_j})}{\kappa(D_{G_j}, v_0)}
+ \sum_{i=1}^l
\frac{\det(D_{\mathcal{T}_i})}{\kappa(D_{\mathcal{T}_i})},\\
\frac{\cof(D_G)}{\kappa(D_G, v_0)} - 1 = &\ 
\sum_{j=1}^k \left( \frac{\cof(D_{G_j})}{\kappa(D_{G_j}, v_0)} - 1
\right) + \sum_{i=1}^l \left(
\frac{\cof(D_{\mathcal{T}_i})}{\kappa(D_{\mathcal{T}_i})} - 1 \right),
\end{aligned}
\end{align}
where the denominators on the right are placeholders as earlier, and get
cancelled upon multiplying by the denominators on the left.
\end{cor}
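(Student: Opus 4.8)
The plan is to derive this corollary directly from Theorem~\ref{Tghh3} and Theorem~\ref{Tthird}, with essentially no new computation. Let $G$ denote the graph obtained by gluing $G_1,\dots,G_k$ along the common vertex $v_0$ and then attaching the pendant trees $\mathcal{T}_1,\dots,\mathcal{T}_l$ at $v_0$. Then $v_0$ is a cut-vertex of $G$, the subgraphs $G_1,\dots,G_k,\mathcal{T}_1,\dots,\mathcal{T}_l$ all contain $v_0$ and cover $V(G)$, and—since each $\mathcal{T}_i$ meets $G$ (and meets every other $\mathcal{T}_{i'}$) only at $v_0$—the family $\{G_1,\dots,G_k,\mathcal{T}_1,\dots,\mathcal{T}_l\}$ satisfies the pairwise-disjointness condition $(V(H)\setminus\{v_0\})\cap(V(H')\setminus\{v_0\})=\emptyset$ for distinct members $H,H'$. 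The first thing I would check is that the data $d,m$ on the glued graph $G$ satisfy the compatibility~\eqref{Edistance} of Definition~\ref{Dthird}: for $v$ in a pendant tree $\mathcal{T}_i$ and $w$ in some $G_j$ (or in another $\mathcal{T}_{i'}$), every directed path from $v$ to $w$ passes through $v_0$, so $d(v,w)=d(v,v_0)+m(v,v_0)\,d(v_0,w)$ by the concatenation rule defining the distance matrix of $G$; the remaining instances of~\eqref{Edistance} are inherited from the pieces $G_j$.

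Second, I would apply Theorem~\ref{Tghh3} verbatim to this enlarged family of $k+l$ subgraphs of $G$, all containing $v_0$. Letting $H,H'$ range over $G_1,\dots,G_k,\mathcal{T}_1,\dots,\mathcal{T}_l$, this yields at once
\[
\kappa(D_G, v_0) = \prod_{H} \kappa(D_H, v_0), \qquad
\det(D_G) = \sum_{H} \det(D_H) \prod_{H' \neq H} \kappa(D_{H'}, v_0),
\]
together with the corresponding identity $\cof(D_G) = \kappa(D_G,v_0) + \sum_{H}(\cof(D_H)-\kappa(D_H,v_0))\prod_{H'\neq H}\kappa(D_{H'},v_0)$.

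Third, I would invoke Theorem~\ref{Tthird}: each $\mathcal{T}_i$ is a tree, so $\kappa(D_{\mathcal{T}_i}, v_0)$ does not depend on the chosen node and hence equals the invariant $\kappa(D_{\mathcal{T}_i})$. Substituting this for $\kappa(D_{\mathcal{T}_i},v_0)$ in the displays above turns the $\kappa$-identity into the asserted product formula for $\kappa(D_G,v_0)$, and turns the $\det$- and $\cof$-identities into their product/sum forms; dividing through by $\kappa(D_G, v_0)=\prod_{j}\kappa(D_{G_j},v_0)\prod_i\kappa(D_{\mathcal{T}_i})$—with the usual convention that the displayed denominators are placeholders cancelled by the matching factors outside the sums, an honest division whenever the $\kappa$'s are invertible—produces exactly the three ratio identities claimed in the corollary, the extra "$+1$" on the right of the $\det$-free rearrangement being absorbed into the "$-1$" terms as in Theorem~\ref{Tghh3}.

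I do not expect a genuine obstacle here: the only point requiring care is the first step, namely confirming that gluing the pieces at $v_0$ yields distance data meeting~\eqref{Edistance} and that the disjointness hypothesis of Theorem~\ref{Tghh3} is genuinely satisfied by the enlarged family; once that is settled the argument is a mechanical substitution. (Alternatively, one could deduce the pendant-tree case by an $l$-fold induction using only the $k=2$ instance of Theorem~\ref{Tghh3} and the edge-factorization $\kappa(D_{\mathcal{T}_i})=\prod_{e\in E(\mathcal{T}_i)}a_e(1-m_em'_e)$ of Theorem~\ref{Tthird}, but the one-shot reduction above is cleaner.)
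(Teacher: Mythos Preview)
Your proposal is correct and follows essentially the same approach the paper indicates: the paper states that the corollary is a ``straightforward consequence of Theorems~\ref{Tghh3} and~\ref{Tthird}'' and omits the proof, and you have simply written out those details---applying Theorem~\ref{Tghh3} to the enlarged family $G_1,\dots,G_k,\mathcal{T}_1,\dots,\mathcal{T}_l$ and then using Theorem~\ref{Tthird} to replace each $\kappa(D_{\mathcal{T}_i},v_0)$ by the node-independent invariant $\kappa(D_{\mathcal{T}_i})$.
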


\noindent We skip the proof as this result is a straightforward
consequence of Theorems~\ref{Tghh3} and~\ref{Tthird}.

Second, when given a graph $G$ with a usual, additive distance matrix
$(d(i,j))_{i,j \in V}$, recall that one can treat $D_G$ as the $q=1$
specialization of the matrix
$D_q(G) := \frac{1}{q-1} (q^{d(i,j)} - 1)_{i,j \in V}$.
We now claim $\kappa(D_G, v_0) \to \cof(D_G)$ when $q \to 1$. Indeed,
beginning with Theorem~\ref{Tthird}, we obtain:

\begin{cor}
The formulas in Theorems~\ref{Tghh3} and~\ref{Tthird} with $m(i,j) =
q^{d(i,j)}$ specialize as $q \to 1$ to the classical
Graham--Hoffman--Hosoya formulas~\eqref{Eghh}.
\end{cor}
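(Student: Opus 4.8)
The plan is to reduce everything to a single observation: under the specialization $m(i,j)=q^{d(i,j)}$ --- interpreted, as in Definition~\ref{Dspecialize} and the conventions preceding Proposition~\ref{Tghh2ghh}, with the underlying distance function being the $q$-distance $d_q(i,j):=(q^{d(i,j)}-1)/(q-1)$ --- the invariant $\kappa(\cdot,v_0)$ degenerates at $q=1$ into the cofactor-sum $\cof(\cdot)$. Granting this, substituting $q=1$ in the three identities of Theorem~\ref{Tghh3} gives $\cof(D_G)=\prod_j\cof(D_{G_j})$, $\det(D_G)=\sum_j\det(D_{G_j})\prod_{i\ne j}\cof(D_{G_i})$, and (tautologically, since each $\cof(D_{G_j})-\kappa(D_{G_j},v_0)$ tends to $0$) $\cof(D_G)=\cof(D_G)$; combined with the iteration over cut-vertices carried out in the proof of Theorem~\ref{Tghhmult}, these telescope into exactly~\eqref{Eghh}. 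The tree case, Theorem~\ref{Tthird}, is subsumed: its strong blocks are the edges $e$, with $D_e=\left(\begin{smallmatrix}0&[\ell_e]\\ [\ell_e]&0\end{smallmatrix}\right)$, so $\kappa(\dt)=\prod_e a_e(1-m_em'_e)=\prod_e(-[2\ell_e])$ tends to $\prod_e(-2\ell_e)=\prod_e\cof(D_e)=\cof(D_T)$, and~\eqref{Eqthird} becomes $\det(D_T)/\cof(D_T)=\sum_e\det(D_e)/\cof(D_e)$.

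First I would record the setup: the $q$-integer identity $[a+b]=[a]+q^a[b]$ is precisely the compatibility~\eqref{Edistance} for the pair $(d_q,\,m=q^{d(\cdot,\cdot)})$, so $D_q(G')$ is a legitimate matrix in the sense of Definition~\ref{Dthird}, and each entry $[d(i,j)]$, as well as each coordinate of the vector ${\bf m}(V'\setminus\{v_0\},v_0)=(q^{d(v,v_0)})_v$ occurring in~\eqref{Ekappa}, is a polynomial in $q$. Hence $\kappa(D_q(G'),v_0)$, $\det(D_q(G'))$ and $\cof(D_q(G'))$ are polynomials in $q$, so ``$q\to 1$'' means substituting $q=1$; at $q=1$ one has $[d(i,j)]=d(i,j)$ and $(q^{d(v,v_0)})_v={\bf e}$, so $D_q(G')$ becomes the classical distance matrix $D_{G'}$ and ${\bf m}(V'\setminus\{v_0\},v_0)$ becomes ${\bf e}$. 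Therefore, writing $D_{G'}=\left(\begin{smallmatrix}D_0 & {\bf u}\\ {\bf w}^T & 0\end{smallmatrix}\right)$ after relabelling $v_0$ to the last index, formula~\eqref{Ekappa} shows
\[
\kappa(D_q(G'),v_0)\big|_{q=1}=\det\!\left(D_0-{\bf u}{\bf e}^T-{\bf e}{\bf w}^T\right).
\]

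The one genuine point, which I would isolate as a lemma, is the purely linear-algebraic identity: for every square matrix $A=\left(\begin{smallmatrix}D_0 & {\bf u}\\ {\bf w}^T & 0\end{smallmatrix}\right)$ over a commutative ring, $\cof(A)=\det(D_0-{\bf u}{\bf e}^T-{\bf e}{\bf w}^T)$. I would prove it first over the fraction field in which $D_0$ is invertible: expand the right-hand side by the rank-two matrix-determinant lemma as $\det(D_0)\det(I_2-V^TD_0^{-1}U)$ with $U=({\bf u}\ {\bf e})$, $V=({\bf e}\ {\bf w})$; and extract $\cof(A)$ as the coefficient of $x$ in $\det(A+xJ)$ via the bordered-determinant expansion $\det(A+xJ)=x\det(D_0+xJ)-({\bf w}+x{\bf e})^T\adj(D_0+xJ)({\bf u}+x{\bf e})$ together with Lemma~\ref{Ldetcof} (so that $\det(D_0+xJ)=\det D_0+x\cof D_0$, the $x^2$-terms cancel, $\adj(D_0+xJ){\bf e}=\adj(D_0){\bf e}$, and ${\bf e}^T\adj(D_0+xJ)={\bf e}^T\adj(D_0)$). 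A short computation shows both sides equal the same expression in $\det D_0$, $\cof D_0$, $\det A=-{\bf w}^T\adj(D_0){\bf u}$, ${\bf e}^T\adj(D_0){\bf u}$, ${\bf w}^T\adj(D_0){\bf e}$, and $({\bf e}^T\adj(D_0){\bf u})({\bf w}^T\adj(D_0){\bf e})/\det D_0$. Since $\cof(A)$ and $\det(D_0-{\bf u}{\bf e}^T-{\bf e}{\bf w}^T)$ are polynomials in the entries of $A$, a Zariski density argument (Lemma~\ref{Lzariski}), of exactly the form used throughout the paper, propagates the identity from this dense open locus to $\Z[\text{entries}]$ and thence to every commutative ring. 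Combined with the previous paragraph this yields $\kappa(D_q(G'),v_0)\big|_{q=1}=\cof(D_{G'})$ for every $G'\ni v_0$, after which the corollary follows by substituting $q=1$ in Theorems~\ref{Tghh3} and~\ref{Tthird} (and iterating over cut-vertices as above).

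I expect the main obstacle to be the lemma $\cof(A)=\det(D_0-{\bf u}{\bf e}^T-{\bf e}{\bf w}^T)$: although elementary, the bookkeeping --- the cancellation of the $x^2$-terms in $\det(A+xJ)$, and matching the two $\adj(D_0)$-bilinear expressions on the nose --- is the only place a real calculation is needed, and one must remember to first pass to the fraction field where $D_0$ is invertible before invoking Zariski density, just as in the rest of the paper.
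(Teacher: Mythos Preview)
Your proposal is correct and follows essentially the same route as the paper: both reduce the corollary to the single observation that under $m(i,j)=q^{d(i,j)}$ one has $\kappa(D_q(G'),v_0)\big|_{q=1}=\det(D_0-{\bf u}{\bf e}^T-{\bf e}{\bf w}^T)=\cof(D_{G'})$, after which substituting into Theorem~\ref{Tghh3} yields~\eqref{Eghh}. The only difference is that the paper simply cites the identity $\cof(A)=\det(D_0-{\bf u}{\bf e}^T-{\bf e}{\bf w}^T)$ as an observation of Graham--Hoffman--Hosoya in~\cite{GHH}, whereas you supply a direct proof via the matrix-determinant lemma and Zariski density; your argument is thus more self-contained but otherwise identical in structure.
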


\begin{proof}
This is easy to see in the setting of Theorem~\ref{Tthird} (and
Proposition~\ref{Pthird}) for trees, using above results with $a_e =
1/(q-1)\ \forall e \in E$. For general graphs, begin with the formula
\[
\kappa(D_G, v_0)|_{q \to 1} := \lim_{q \to 1} \det \left( [D]_q|_{V
\setminus \{ v_0 \}} - [{\bf u}_1]_q \, {\bf e}^T - [{\bf m}(V \setminus
\{ v_0 \}, v_0)]_q [{\bf w}_1]_q^T \right),
\]
where $[{\bf u}_1]_q$ is the vector with $j$th coordinate $(q^{u_j} -
1)/(q-1)$, etc. Now by the polynomiality of the determinant in its
entries, setting $q=1$ on the right-hand side after taking the
determinant is the same as setting it before; and in the latter scenario,
we have
\[
\kappa(D_G, v_0)|_{q \to 1} = \det (D_{V \setminus \{ v_0 \}} - {\bf
u}_1|_{q \to 1} {\bf e}^T - {\bf e} \, {\bf w}_1^T|_{q \to 1}).
\]
But this is precisely $\cof(D_G)$, as observed in~\cite{GHH}. We are done
by Theorem~\ref{Tghh3}.
\end{proof}

Finally, we present an example which shows that the
Graham--Hoffman--Hosoya type identities in Theorem~\ref{Tghh3} do not
uniformly hold in greater generality.

\begin{example}\label{Exthird}
While $\det(\dt), \cof(\dt)$ for trees $\mathcal{T}$ depend only on the
strong blocks -- i.e.~edges -- of $\mathcal{T}$, the same does not hold
for general graphs. For example, let $G$ consist of one cut-vertex $v_0$
and two strong blocks: an edge $e$ with data $(a_e, m_e = m'_e)$;
and the clique $K_3$, with distance matrix
\[
D_{K_3} = \begin{pmatrix}
0 & a(m - 1) & c(q - 1) \\
a(m - 1) & 0 & b(n - 1) \\
c(q - 1) & b(n - 1) & 0
\end{pmatrix}.
\]
We now claim that the quantity
\[
\kappa(D_G, v_0) = \kappa(D_{K_3}, v_0) \cdot a_e (1 - m_e^2)
\]
(from above results) does depend on the location of the cut-vertex $v_0$.
Indeed, to show this it suffices to verify that $\kappa(D_{K_3}, 1) \neq
\kappa(D_{K_3}, 2) \neq \kappa(D_{K_3}, 3)$. But an easy computation
shows that
\[
\kappa(D_{K_3}, 3) = \det \begin{pmatrix}
c(1 - q^2) & a(m-1) - c(q-1) - qb(n-1)\\
a(m-1) - b(n-1) - nc(q-1) & b(1 - n^2)
\end{pmatrix}.
\]
Now $a^2 m^2, b^2 n^2$ have coefficients $1,q$ in $\kappa(D_{K_3}, 3)$,
respectively. As this is not ``symmetric'', it follows that
$\kappa(D_{K_3}, 3) \neq \kappa(D_{K_3},1)$; the remaining verifications
are similar. \qed
\end{example}

\subsection{Proofs}

\begin{proof}[Proof of Theorem~\ref{Tghh3}]
It suffices by induction on $k$ to prove the result when $k=2$. Thus,
suppose
\[
G = G_1 \sqcup_{v_0} G_2, \quad \text{with} \quad V(G_1) = \{ 1, \dots,
v_0 \}, \ V(G_2) = \{ v_0, \dots, n \}.
\]
Let $V'_j := V(G_j) \setminus \{ v_0 \}$ for $j=1,2$. Corresponding to
this notation, write the distance matrix as
\[
D_G := \begin{pmatrix}
D_1 & {\bf u}_1 & {\bf u}_1 {\bf e}^T + {\bf m}(V'_1, v_0) {\bf w}_2^T \\
{\bf w}_1^T & 0 & {\bf w}_2^T \\
{\bf u}_2 {\bf e}^T + {\bf m}(V'_2, v_0) {\bf w}_1^T & {\bf u}_2 & D_2
\end{pmatrix}
\]
in block form. Let
\begin{equation}\label{Edprime}
D'_j := D_j - {\bf u}_j {\bf e}^T - {\bf m}(V'_j, v_0) {\bf w}_j^T,
\qquad j=1,2.
\end{equation}

The first claim is that computing $\kappa(D_G, v_0)$ yields precisely the
determinant of the block-diagonal matrix $\begin{pmatrix} D'_1 & 0 \\ 0 &
D'_2 \end{pmatrix}$. This is straightforward, and it follows that
\[
\kappa(D_G, v_0) = \det(D'_1) \det(D'_2) = 
\kappa(D_{G_1}, v_0) \kappa(D_{G_2}, v_0).
\]

We next show the identity for $\det(D_G)$. Begin with the block matrix
$D_G$ as above, and carry out the sequence of block row operations
\[
R_1 \mapsto R_1 - {\bf m}(V'_1, v_0) \, R_2, \qquad
R_3 \mapsto R_3 - {\bf m}(V'_2, v_0) \, R_2,
\]
followed by the sequence of block-column operations
\[
C_1 \mapsto C_1 - C_2 \, {\bf e}^T, \qquad
C_3 \mapsto C_3 - C_2 \, {\bf e}^T.
\]
This yields precisely the matrix
\[
D' := \begin{pmatrix}
D'_1 & {\bf u}_1 & 0 \\
{\bf w}_1^T & 0 & {\bf w}_2^T \\
0 & {\bf u}_2 & D'_2
\end{pmatrix},
\]
where $D'_1, D'_2$ were defined in~\eqref{Edprime}. Now we may assume by
a Zariski density argument that (in our setting) $D'_1, D'_2$ are
invertible, since they are nonzero matrices with no constraints. Carrying
out block row and column operations on $R_2, C_2$ yields a
block-triangular matrix, and we obtain:
\begin{align*}
\det D_G = \det D' = &\ \det(D'_1) \det (D'_2) \cdot (- {\bf w}_1^T
(D'_1)^{-1} {\bf u}_1 - {\bf w}_2^T (D'_2)^{-1} {\bf u}_2 )\\
= &\ \det(D'_2) \det \begin{pmatrix} D'_1 & {\bf u}_1 \\ {\bf w}_1^T & 0
\end{pmatrix} + 
\det(D'_1) \det \begin{pmatrix} 0 & {\bf w}_2^T \\ {\bf u}_2 &  D'_2
\end{pmatrix},
\end{align*}
where the final equality uses two Schur complement expansions of
determinants. But now in the two block $2 \times 2$ matrices in the final
expression, we may replace $D'_j$ by $D_j$ by performing block row and
column operations. Hence,
\[
\det(D_G) = \det (D'_2) \det (D_{G_1}) + \det (D'_1) \det (D_{G_2}).
\]
This shows (by Zariski density) the formula for $\det(D_G)$,
since $\det (D'_j) = \kappa(D_{G_j}, v_0)$ for $j=1,2$.

Finally, we show the claimed identity for $\cof(D_G)$. Begin with the
matrix $D_G + xJ$, where $D_G$ is as above. Carrying out the block-column
operations
\[
C_1 \mapsto C_1 - C_2 \, {\bf e}^T, \qquad
C_3 \mapsto C_3 - C_2 \, {\bf e}^T.
\]
on $D_G+xJ$, and denoting ${\bf m}_j := {\bf m}(V'_j, v_0)$ for
convenience, we compute:
\[
\det(D_G) + x \cof(D_G) = \det \begin{pmatrix}
D_1 - {\bf u}_1 {\bf e}^T & {\bf u}_1 + x {\bf e} & {\bf m}_1 {\bf w}_2^T
\\
{\bf w}_1^T & x & {\bf w}_2^T \\
{\bf m}_2 {\bf w}_1^T & {\bf u}_2 + x {\bf e} & D_2 - {\bf u}_2 {\bf e}^T
\end{pmatrix}.
\]
By linearity of $\det(\cdot)$ in the second row (treated as a polynomial
in $x$ with vector coefficients), and taking the linear term in $x$, we
obtain via Lemma~\ref{Ldetcof}:
\[
\cof(D_G) = \det \begin{pmatrix}
D_1 - {\bf u}_1 {\bf e}^T & {\bf e} & {\bf m}_1 {\bf w}_2^T \\
{\bf w}_1^T & 1 & {\bf w}_2^T \\
{\bf m}_2 {\bf w}_1^T & {\bf e} & D_2 - {\bf u}_2 {\bf e}^T
\end{pmatrix}
= \det \begin{pmatrix}
D'_1 & {\bf e} - {\bf m}_1 & 0 \\
{\bf w}_1^T & 1 & {\bf w}_2^T \\
0 & {\bf e} - {\bf m}_2 & D'_2
\end{pmatrix},
\]
where $D'_1, D'_2$ are as in~\eqref{Edprime}, and the final equality uses
two block-row operations. Again assuming $D'_1, D'_2$ are invertible over
$R$ by Zariski density, we obtain via block-row operations:
\begin{equation}\label{Ecof-kappa}
\cof(D_G) = \kappa(D_{G_1}, v_0) \kappa(D_{G_2}, v_0) (1
- {\bf w}_1^T (D'_1)^{-1} ({\bf e} - {\bf m}_1)
- {\bf w}_2^T (D'_2)^{-1} ({\bf e} - {\bf m}_2)),
\end{equation}
where $\kappa(D_{G_j}, v_0) = \det(D'_j)$ s above.

A similar analysis for the matrix $D_{G_j}$ for $j=1,2$ reveals that
\[
\cof(D_{G_j}) = \det(D_j - {\bf e} {\bf w}_j^T - {\bf u}_j {\bf e}^T).
\]
But this is the determinant of a rank-one update of $D'_j$, so using
Schur complements,
\[
\cof(D_{G_j}) = \det(D'_j - ({\bf e} - {\bf m}_j) {\bf w}_j^T) = \det
\begin{pmatrix} D'_j & {\bf e} - {\bf m}_j \\ {\bf w}_j^T & 1
\end{pmatrix}
= \kappa(D_{G_j}, v_0) (1 - {\bf w}_j^T (D'_j)^{-1} ({\bf e} - {\bf
m}_j)).
\]
Combining this with~\eqref{Ecof-kappa}, the result follows.
\end{proof}

\begin{proof}[Proof of Theorem~\ref{Tthird}]
Consider a tree with edge-data $\mathcal{T}$ and a node $v_0$. We prove
the result by induction on $|V|$, with the $|V|=2$ case (of a single
edge) easily verified. For the induction step, if $v_0$ is a cut-vertex
then we are done by Theorem~\ref{Tghh3}. Thus, suppose $v_0$ is a pendant
node, say $v_0 = n \geq 3$ and $n \sim p(n) = n-1$. Denoting
\[
a_{e_0} := a_{n-1,n}, \quad
m_{e_0} := m_{n-1,n}, \quad
m'_{e_0} := m_{n,n-1}, \quad
{\bf d} := {\bf d}([n-2], n-1), \quad
{\bf d}' := {\bf d}(n-1, [n-2])
\]
for notational convenience, the matrix $\dt$ is of the form
\[
\begin{pmatrix}
D|_{[n-2]} & {\bf d} & {\bf d} + a_{e_0} (m_{e_0} - 1) {\bf m}([n-2],n-1)
\\
({\bf d}')^T & 0 & a_{e_0} (m_{e_0} - 1)\\
a_{e_0} (m'_{e_0} - 1) {\bf e}^T + m'_{e_0} ({\bf d}')^T & a_{e_0}
(m'_{e_0} - 1) & 0
\end{pmatrix}.
\]

Writing $\dt = \begin{pmatrix} D|_{[n-1]} & {\bf u}_1 \\ {\bf w}_1^T & 0
\end{pmatrix}$, we compute:
\[
D_\kappa = \begin{pmatrix} D_0 & a_{e_0} (1 - m_{e_0} m'_{e_0}) {\bf
m}([n-2], n-1) \\ (1 - m_{e_0} m'_{e_0}) ({\bf d}')^T + a_{e_0} (1 -
m_{e_0} m'_{e_0}) {\bf e}^T & a_{e_0} (1 - m_{e_0} m'_{e_0})
\end{pmatrix},
\]
where
\[
D_0 = D|_{[n-2]} - {\bf d} \, {\bf e}^T
+ a_{e_0} (1 - m_{e_0} m'_{e_0}) {\bf m}([n-2], n-1) {\bf e}^T
- m_{e_0} m'_{e_0} {\bf m}([n-2], n-1) \, ({\bf d}')^T.
\]

Carry out the block-row operation $R_1 \mapsto R_1 - {\bf m}([n-2], n-1)
R_2$, to obtain a block lower triangular determinant:
\begin{align*}
\kappa(\dt, n) = &\ \det D_\kappa = \det \begin{pmatrix}
D|_{[n-2]} - {\bf d} \, {\bf e}^T - {\bf m}([n-2], n-1) \, ({\bf d}')^T
& 0\\
(1 - m_{e_0} m'_{e_0}) (a_{e_0} {\bf e}^T - ({\bf d}')^T) & a_{e_0} (1 -
m_{e_0} m'_{e_0}).
\end{pmatrix}
\end{align*}
But the $(1,1)$ block on the right-hand side has determinant precisely
$\kappa(\dt|_{[n-1]}, n-1)$. We are now done by the induction hypothesis.
Finally,~\eqref{Eqthird} is now straightforward from
Theorem~\ref{Tmaster}.
\end{proof}

\begin{proof}[Proof of Proposition~\ref{Pthird}]
We first assume that~(1) holds, and show~(3). Consider a tree with
edge-data $\mathcal{T}$ and a node $v_0$. Attach a pendant edge $e_0$ to
$v_0$ with edge-data $(a_{e_0}, m_{e_0}, m'_{e_0})$, and call the
resulting edge-data $\mathcal{T}_0$ -- note that $v_0$ is a cut-vertex in
$\mathcal{T}_0$ satisfying the assumptions in Definition~\ref{Dthird}.
Since it is easily verified that $\kappa(D_{e_0}, v_0) = a_{e_0} (1 -
m_{e_0} m'_{e_0})$, Theorem~\ref{Tghh3} yields:
\[
\det(D_{\mathcal{T}_0}) = \kappa(\dt, v_0) \det(D_{e_0}) +
\kappa(D_{e_0}, v_0) \det(\dt).
\]
But in this equation, all terms except $\kappa(\dt, v_0)$ are known
by~(1) and direct computation. From this, and a Zariski density argument
that allows one to cancel $\det(D_{e_0})$, the assertion~(3) follows.

A similar argument shows $(2) \implies (3)$ -- now assuming by Zariski
density that $(\cof - \kappa)(D_e)$ and $\kappa(D_e)$ are invertible for
each edge $e$.
Conversely, suppose~(3) holds. Then~(1) and~(2) are direct consequences
of Theorem~\ref{Tghh3} and direct computation for $\det(D_e)$.
\end{proof}

\subsection*{Acknowledgments}

The authors are grateful to the referee(s) for carefully going
through the manuscript in detail, and for their helpful comments which
improved the paper.
P.N.C.\ was partially supported by
INSPIRE Faculty Fellowship research grant DST/INSPIRE/04/2021/002620
(DST, Govt.~of India),
IIT Gandhinagar Internal Project grant IP/IITGN/MATH/PNC/2223/25,
C.V.\ Raman Postdoctoral Fellowship 80008664 (IISc),
National Post-Doctoral Fellowship PDF/2019/000275 (SERB, Govt.\ of
India), and
NBHM--DAE Postdoctoral Fellowship 0204/11/2018/R\&D-II/6437.
A.K.\ was partially supported by
Ramanujan Fellowship SB/S2/RJN-121/2017,
MATRICS grant MTR/2017/000295, and
SwarnaJayanti Fellowship grants SB/SJF/2019-20/14 and DST/SJF/MS/2019/3
from SERB and DST (Govt.~of India),
grant F.510/25/CAS-II/2018(SAP-I) from UGC (Govt.~of India),
and by a Young Investigator Award from the Infosys Foundation.




\end{document}